
\documentclass[a4paper]{amsart}




\usepackage{amsmath,amsfonts,amssymb,amsthm}
\usepackage[colorlinks=true]{hyperref}
\usepackage{graphicx}
\usepackage[all]{xy}
\usepackage{mathrsfs}
\usepackage{enumerate}
\usepackage{oldgerm}
\usepackage{MnSymbol}
\usepackage{stmaryrd}


\theoremstyle{definition}
\newtheorem{theorem}{Theorem}[subsection]

\newtheorem*{theorem*}{Theorem}

\newtheorem{corollary}[theorem]{Corollary}
\newtheorem{definition}[theorem]{Definition}
\newtheorem{examples}[theorem]{Examples}

\newtheorem{example}[theorem]{Example}

\newtheorem{lemma}[theorem]{Lemma}
\newtheorem{proposition}[theorem]{Proposition}
\newtheorem{remark}[theorem]{Remark}
\numberwithin{equation}{section}



\newcommand{\toto}{\rightrightarrows}
\newcommand{\xto}{\xrightarrow}
\newcommand{\from}{\leftarrow}
\newcommand{\xfrom}{\xleftarrow}

\newcommand{\dashto}{\dashrightarrow}
\def\<{\langle}
\def\>{\rangle}
\newcommand{\action}{\curvearrowright}
\newcommand{\raction}{\curvearrowleft}

\newcommand{\R}{\mathbb R}

\newcommand{\Z}{\mathbb Z}
\newcommand{\E}{\mathcal E}

\newcommand{\Ss}{\mathbb S}

\newcommand{\id}{{\rm id}}

\newcommand{\mon}{{\rm Mon}}

\newcommand{\eps}{\varepsilon}

\renewcommand{\d}{{\rm d}}
\newcommand{\G}[1]{G^{(#1)}}
\newcommand{\U}[1]{U^{(#1)}}
\newcommand{\V}[1]{V^{(#1)}}
\newcommand{\e}[1]{\eta^{(#1)}}

\newcommand{\al}{\alpha}
\newcommand{\be}{\beta}

\newcounter{comments}



\begin{document}

\title[Riemannian metrics on differentiable stacks]{
Riemannian metrics on differentiable stacks}

\author{Matias del Hoyo}
\address{Departamento de Geometria, Universidade Federal Fluminense,
Rua Mario Santos Braga, S/N, Campus do Valonguinho, CEP 24020-140, Niteroi, RJ, Brasil.}
\email{mldelhoyo@id.uff.br}

\author{Rui Loja Fernandes}
\address{Department of Mathematics, University of Illinois at Urbana-Champaign, 1409 W. Green Street, Urbana, IL 61801, USA} 
\email{ruiloja@illinois.edu}

\thanks{MdH was partially supported by the ERC Starting Grant No. 279729. RLF was partially supported by NSF grants DMS 1308472, DMS 1405671, DMS 1710884 and FCT/Portugal. Both authors acknowledge the support of the \emph{Ci\^encias Sem Fronteiras} grant 401817/2013-0.}

\date{\today}

\begin{abstract} 
We study Riemannian metrics on Lie groupoids in the relative setting. We show that any split fibration between proper groupoids can be made Riemannian, and we use these metrics to linearize proper groupoid fibrations. As an application, we derive rigidity theorems for Lie groupoids, which unify, simplify and improve similar results for classic geometries. Then we establish the Morita invariance for our metrics, introduce a notion for metrics on stacks, and use them to construct stacky tubular neighborhoods and to prove a stacky Ehresmann theorem.
\end{abstract}

\maketitle

\vspace{-1cm}

\setcounter{tocdepth}{1} 
\tableofcontents 

\vspace{-1cm}


\section{Introduction}


Lie groupoids and differentiable stacks provide a framework to perform differential geometry on singular spaces. The interaction between these theories is twofold. 
Lie groupoids generalize group actions, fibrations, and foliations \cite{mm}, and their transverse geometry is encoded by differentiable stacks. Differentiable stacks, an incarnation of Grothendieck's ideas, first developed in algebraic geometry, give rise to Lie groupoids when endowed with a presentation (\cite{bx,metzler}). For example, orbifolds are the stacks corresponding to proper, \'etale groupoids (\cite{lerman,mm}).


Many results and techniques for smooth manifolds have been succesfully extended to orbifolds, with applications to Poisson geometry, non-commutative geometry and mathematical physics. For instance, Riemannian metrics on orbifolds have allowed important generalizations of the Hodge Theorem or the Gauss-Bonnet formula. However, little is known when dealing with general differentiable stacks, on which the dimension of the isotropy may vary. In particular, a notion of Riemannian metrics for general differentiable stack has been missing.


Recent developments in the theory of Lie groupoids allow us to shed some light on the Riemannian geometry of differentiable stacks. 
In \cite{dhf} we have constructed compatible Riemannian metrics on proper Lie groupoids, called 2-metrics, and used them to linearize groupoids using exponential maps, improving Weinstein-Zung Linearization Theorem and its generalizations considerably (\cite{weinstein,zung,cs}). 
Here we develop the relative version of our theory, constructing compatible metrics on fibrations of Lie groupoids, proving linearization results for maps, and presenting two major applications, namely, (i) rigidity theorems for Lie groupoids, and (ii) a Morita invariance of 2-metrics, leading to a notion of metric on differentiable stacks. 


A special kind of Lie groupoid map, playing a key role here, is a smooth version of the notion of fibration between categories, introduced by Grothendieck. Our first main result concerns the construction of 2-metrics adapted to a fibration, based on the so called \emph{gauge trick}, an averaging argument introduced in \cite{dhf}: 

\begin{theorem*}[Existence of fibered groupoid metrics]
Let $\phi:\tilde G\to G$ be a split fibration between proper Lie groupoids. Then there exist 2-metrics $\tilde\eta$ on $\tilde G$ and $\eta$ on $G$ making $\phi$ into a Riemannian submersion.
\end{theorem*}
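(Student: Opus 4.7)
The strategy is to fix a 2-metric on the base, lift it horizontally using the splitting, complete it to a full metric on $\tilde G^{(2)}$ with a fibre-wise metric, and finally correct it by the gauge trick to obtain multiplicativity. Since $G$ is proper, the main existence theorem from \cite{dhf} provides a 2-metric $\eta$ on $G$; we fix such an $\eta$ and focus on constructing $\tilde\eta$.

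The split structure on $\phi$ is a smooth cleavage, which produces a horizontal distribution $H\subset T\tilde G$ complementary to $\ker\d\phi$ and extends coherently to every level of the nerve, in particular giving a horizontal distribution on $T\tilde G^{(2)}$ complementary to $\ker\d\phi^{(2)}$. Along the horizontal part we put the pullback $(\phi^{(2)})^*\eta$; along the vertical part we put any smoothly varying fibre metric (which exists by a partition of unity argument). Their orthogonal sum $\tilde\eta_0$ is a Riemannian metric on $\tilde G^{(2)}$ for which $\phi^{(2)}:(\tilde G^{(2)},\tilde\eta_0)\to(G^{(2)},\eta)$ is, by construction, a Riemannian submersion.

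The remaining issue is that $\tilde\eta_0$ need not be multiplicative. We apply the gauge trick of \cite{dhf}, which uses properness of $\tilde G$ to average $\tilde\eta_0$ over the groupoid action on its nerve and yields a 2-metric $\tilde\eta$. The main obstacle, and the heart of the argument, is to carry out this averaging while preserving the Riemannian submersion property; this is where the splitting plays its essential role. One performs the averaging only in the vertical directions, along the kernel subgroupoid action, leaving the horizontal pullback $(\phi^{(2)})^*\eta$ untouched. The horizontal pullback is automatically invariant because $\eta$ is already a 2-metric on $G$ and $\phi$ is a groupoid morphism, so the $\tilde G$-action on $\tilde G^{(2)}$ covers the $G$-action on $G^{(2)}$ through $\phi^{(2)}$. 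Consequently the averaged metric $\tilde\eta$ splits as $(\phi^{(2)})^*\eta$ plus an invariant vertical fibre metric, which simultaneously makes $\phi^{(2)}$ Riemannian and ensures multiplicativity, yielding the desired pair $(\tilde\eta,\eta)$.
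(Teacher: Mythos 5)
Your overall strategy (first build a metric on $\tilde G^{(2)}$ adapted to $\phi^{(2)}$, then correct it by averaging) is in the same spirit as the paper's proof, but the averaging step has a genuine gap. Averaging only along the kernel subgroupoid cannot produce a 2-metric on $\tilde G$: a 2-metric must be invariant under the actions of the \emph{whole} groupoid $\tilde G$ on $\tilde G^{(2)}$, and the kernel $K$ is in general a much smaller subgroupoid. The simplest failure is an action fibration $(G\ltimes \tilde M\toto\tilde M)\to(G\toto M)$, where $K=\tilde M$ is trivial, so your kernel-averaging does nothing and $\tilde\eta=\tilde\eta_0$; but the arbitrary vertical fibre metric you chose has no reason to be invariant under the $G$-action (take $G$ a compact group acting nontrivially and a generic fibre metric), so $\tilde\eta_0$ is not a 2-metric. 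What your construction actually needs is invariance of the vertical metric under the quasi-action $\theta:G\tilde\action K$ induced by the cleavage, not merely under $K$, and averaging that never involves the horizontal arrows cannot enforce this.

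The complementary claim, that a full averaging would leave the horizontal pullback $(\phi^{(2)})^*\eta$ untouched, is also unjustified: the cotangent average is taken with respect to a quasi-action on $T^*\tilde G^{(2)}$ defined by a choice of connection and Haar density on $\tilde G$, and for an arbitrary choice this mixes the horizontal and vertical subbundles and destroys the Riemannian submersion property. This is precisely the technical heart of the paper's argument: one must use a \emph{fibered} Haar density $\mu_G\ltimes\mu_K$ and a \emph{fibered} connection assembled from data on $G$ and $K$, so that Fubini's theorem applies (Proposition \ref{prop:fiberd-averaging}) and the averaging upstairs is compatible with a simultaneous averaging downstairs. In particular the base metric cannot simply be fixed in advance and declared unchanged: in the paper both $\eta$ and $\tilde\eta$ are produced together by running the gauge trick on $G^{[3]}$ and $\tilde G^{[3]}$ starting from a fibered $1$-metric (the gauge trick does not average a metric on the space of composable arrows directly), with the fibered averaging data guaranteeing that $\tilde G^{[3]}\to G^{[3]}$ stays Riemannian at every stage and that both metrics descend. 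Your proof would need to supply these compatibility arguments to close the gap.
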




Our second main result combines the methods developed by us in \cite{dhf}, to linearize Riemannian groupoids by means of exponential maps, with a thorough analysis on the structure of Lie groupoid fibrations and cleavages. The outcome is a linearization result for fibrations between proper Lie groupoids.
Given $G\toto M$ a Lie groupoid and given $S\subset M$ an embedded saturated submanifold, we denote by $G_S=s^{-1}(S)\toto S$ the restriction to $S$, that is itself a Lie groupoid.


\begin{theorem*}[Linearization of proper groupoid fibrations]
Let $\phi:(\tilde G,\tilde \eta)\to (G,\eta)$ be a fibration between proper Lie groupoids, let $S\subset M$ be an embedded saturated submanifold, and set $\tilde S=\phi^{-1}(S)$.
Then $\phi$ is linearizable around $S$, namely there are linearizations $\tilde\alpha,\alpha$ of $\tilde G,G$ around $\tilde S,S$ making the diagram commutative:
$$\xymatrix@C=50pt{
\nu(\tilde G_{\tilde S}) \supset
\tilde G_{\tilde U} \ar[d]_{\overline{\d\phi}} \ar[r]^{\tilde\alpha}
& \tilde G_{\tilde V} \subset \tilde G \ar[d]^\phi \\
\nu(G_S) \supset G_U \ar[r]^{\alpha} & G_V \subset G}$$
Moreover, if $\phi$ is proper, we can take $\tilde U=\phi^{-1}(U)$ to be an open tube around $\tilde S$.
\end{theorem*} 


The above results are categorifications of basic facts on differential geometry, and at the same time, they are quite powerful and have deep, interesting, geometric consequences. We include in this paper two major applications. First, we give simple geometric proofs of rigidity results on Lie groupoids, generalizing various theorems for classical geometries, such as groups (\cite{nr}), group actions (\cite{palais}) and foliations (\cite{hamilton,rosenberg1}).
They were obtained recently and independently in \cite{cms}, where the authors use instead a theory of deformation cohomology. Our main result in this direction is the following:

\begin{theorem*}[Rigidity of compact Lie groupoids]
Every proper deformation of a compact Lie groupoid $G\toto M$ is trivial.
\end{theorem*}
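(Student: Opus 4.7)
The plan is to encode a proper deformation of $G \toto M$ as a proper split groupoid fibration $\phi : \tilde G \to I$ where $I$ is an interval (viewed as the trivial groupoid over itself), in such a way that the fiber over $0 \in I$ recovers $G$. Then the Existence of Fibered Metrics theorem equips $\tilde G$ and $I$ with compatible 2-metrics for which $\phi$ is a Riemannian submersion, and the proper version of the Linearization theorem (the stacky Ehresmann theorem mentioned right after the linearization statement) upgrades this to a genuine isomorphism $\tilde G \cong G \times I$ over $I$, forcing every fiber $G_t$ to be isomorphic to $G_0 = G$.

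The first step is unpacking the hypotheses: compactness of $G$ means $M$ and $G^{(1)}$ are compact, and by \emph{proper deformation} we mean a Lie groupoid $\tilde G \toto \tilde M$ together with a surjective submersion $\phi : \tilde G \to I$ to an open interval $I \ni 0$, which is a groupoid morphism to the unit groupoid on $I$, is proper, and whose fiber over $0$ is $G \toto M$. Since $\phi$ is a submersion onto a $1$-manifold, any smooth lift of $\partial_t$ on $I$ to $\tilde G$ that is multiplicative provides a cleavage; the existence of such a multiplicative lift (i.e., the splitness of the fibration) follows from averaging an arbitrary lift using the Haar system on the proper groupoid $\tilde G$, exactly the gauge-trick style averaging referenced earlier.

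With $\phi$ now a split fibration between proper Lie groupoids, the Existence of Fibered Metrics theorem supplies 2-metrics $\tilde\eta$ on $\tilde G$ and $\eta$ on $I$ making $\phi$ a Riemannian submersion. Apply the Linearization of Riemannian submersions theorem to the saturated submanifold $S = \{0\} \subset I$, with $\tilde S = \phi^{-1}(0) = G$. The resulting commutative diagram identifies a groupoid neighborhood $\tilde V \subset \tilde G$ of $G$ with the corresponding neighborhood $\tilde U$ in the normal bundle $\nu(\tilde G_{\tilde S}) \to \nu(G_S) = T_0 I$, over the base linearization $V \subset I$ of $0 \in I$. Because $G$ is compact and $\phi$ is proper, the Ehresmann-type strengthening lets us take $\tilde V = \phi^{-1}(V)$ and $\tilde U = \overline{\d\phi}^{-1}(U)$ full, so the exponential identifies $\phi^{-1}(V)$ with the pullback of $\nu(\tilde G_{\tilde S})$ along $\exp_\eta : U \to V$. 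Since $\nu(\tilde G_{\tilde S}) = G \times T_0 I$ as a Lie groupoid over $M \times T_0 I$, this yields an isomorphism $\phi^{-1}(V) \cong G \times V$ of Lie groupoids over $V$.

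The main obstacle is the last step: upgrading the local linearization near $\{0\}$ to triviality over the whole parameter interval. This is where properness of $\phi$ is indispensable, playing exactly the role it does in the classical Ehresmann theorem: one propagates the local trivialization by parallel transport along the horizontal distribution of $\tilde\eta$, using that the flow of a horizontal lift of $\partial_t$ is complete on preimages of compact subintervals. A secondary subtlety is verifying that this flow is not merely a diffeomorphism but a groupoid isomorphism on fibers; this follows from the multiplicativity of the horizontal distribution, which is built into the notion of a Riemannian submersion between Lie groupoids. Once these are in place the deformation is trivial, proving rigidity.
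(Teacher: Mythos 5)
Your overall strategy coincides with the paper's: view the deformation as a proper split fibration over the parameter space, invoke the existence of fibered metrics and the groupoid Ehresmann theorem, and identify the linear model around the central fiber with the trivial family $G\times I\to I$. However, two steps as you present them are genuinely problematic.

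First, your justification of splitness is confused. A cleavage is a section of $\phi':\tilde G\to G\times_M\tilde M$, i.e.\ a lift of \emph{arrows}, not of tangent vectors; your proposed ``multiplicative lift of $\partial_t$'' is an Ehresmann connection, a different object whose multiplicative version is generally obstructed, and averaging a vector field with a Haar system does not produce one. None of this is needed: since the base $I\toto I$ is a unit groupoid, the only arrows to lift are identities, and the unit map $\tilde u:\tilde M\to\tilde G$ is already a unital flat cleavage (this is exactly the paper's Example~\ref{ex:fibrations:cleavages}(b)). So the conclusion you want is true, but for a trivial reason, and the argument you give for it would not go through.

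Second, your final step --- propagating the linearization near $\{0\}$ to a trivialization over all of $I$ by parallel transport along the horizontal distribution --- goes beyond what the paper's argument (and indeed its formal statement, which asserts only \emph{local} triviality) delivers, and it is only sketched. In particular, the claim that the horizontal flow acts by groupoid isomorphisms because ``the multiplicativity of the horizontal distribution is built into the notion of a Riemannian submersion between Lie groupoids'' is not established anywhere: a $2$-metric does not give a multiplicative connection on $\tilde G$, and this is precisely the kind of obstruction the exponential-map linearization is designed to circumvent. The paper stops at local triviality: Corollary~\ref{thm:groupoid-ehresmann} produces full subgroupoid neighborhoods $U$ and $V$ with $\tilde U=(\overline{\d\phi})^{-1}(U)$, $\tilde V=\phi^{-1}(V)$, and the observation that $\nu(\tilde G_{\tilde S})\cong G\times\R^k$ (using local triviality of $\phi^{(0)},\phi^{(1)}$) finishes the proof. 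If you want global triviality over an interval you must either carry out the propagation argument in detail or iterate the local statement with a connectedness argument; as written, that step is a gap.
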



Our second application is a form of Morita invariance for our metrics. 
Every Lie groupoid $G\toto M$ determines an \emph{orbit stack} $[M/G]$ with a presentation $M\to [M/G]$, and two groupoids determine the same differentiable stack if and only if they are Morita equivalent. A Morita equivalence can always be realized by a fraction of Morita fibrations, an adaptation of Verdier's hypercovers \cite[V.7]{agv}. Pulling back and pushing forward 2-metrics along Morita fibrations we get:


\begin{theorem*}[Morita invariance of groupoid metrics]
If two Lie groupoids are Morita equivalent and one admits a 2-metric, then so does the other. 
\end{theorem*}

We will introduce a notion of equivalence of 2-metrics on a given Lie groupoid, and we will show that a Morita equivalence yields a 1-1 correspondence between equivalence classes of 2-metrics.
We thus obtain a notion of \emph{Riemannian metric on a differentiable stack}, generalizing the usual concepts of Riemannian metrics on manifolds and orbifolds. This opens up the possibility to extend to singular spaces modeled by stacks classical concepts in Riemannian geometry such as geodesics and curvature, as well as classical results such as Hopf-Rinow, Bonnet, etc. Some of these questions will be addressed in \cite{dhdm}. 
Here, as a first application of our stacky metrics, we build stacky tubular neighborhoods and, after developing all the required geometric notions for differentiable stacks, we prove the following:

\begin{theorem*}[Stacky Ehresmann]
Let $\phi:[\tilde M/\tilde G]\to [M/G]$ be a surjective submersion between separated stacks, and let $[S/G_S]\subset [M/G]$ be an embedded substack. Then $\phi$ is linearizable around $[S/G_S]$, and if $\phi$ is proper, the linearization can be made over an open tube $\phi^{-1}([U/G_U])$ around $\phi^{-1}([S/G_S])$.
\end{theorem*}


This last result, in an appropriate sense, generalizes and encompasses all the above linearization and rigidity results for groupoids and classic geometries. It can be thought of as a categorification of a basic result. In fact, one can interpret our usage of metrics to prove it as a categorifying the proof of that basic result.


\vskip 5 pt

\noindent{\bf  Organization.} In Section \ref{sec:fibrations} we review basic facts about fibrations between Lie groupoids and their cleavages. In Section \ref{sec:riemannian:subm} we recall the main properties of Riemannian groupoids and we adapt the gauge trick construction of \cite{dhf} to the case of fibrations, proving that split fibrations between proper groupoids can be made Riemannian. In Section \ref{sec:linear:fibrations} we prove our linearization theorems for Riemannian groupoid submersions and proper groupoid fibrations. In Section \ref{section:rigidity} we give applications of our linearization results to deduce rigidity theorems for Lie groupoids. Finally, in section \ref{sec:stack}, we present a stacky version of our theory, developing fundamental notions such as stacky immersion and submersions, proving the Morita invariance of our metrics, and establishing a stacky Ehresmann theorem. 
\vskip 5 pt

\noindent{\bf  Acknowledgments. } We are grateful to IMPA, UU and UIUC for hosting us at several stages of this project. We thank H.~Bursztyn, E.~Lerman, I.~Marcut and I.~Moerdijk for fruitful discussions, and to M.~Crainic, J.N.~Mestre and I.~Struchiner for sharing with us a preliminary version of their preprint \cite{cms}. We also thank the referee for his comments and suggestions, that helped improve this manuscript.


%
%
%

\section{Lie groupoids fibrations}
\label{sec:fibrations}


We review here definitions and basic facts on fibrations between Lie groupoids, provide examples, and discuss the key concept of a cleavage, that play a role in a sense analogous to connections in ordinary differential geometry. Then we review the correspondence between split fibrations and semi-direct product. Finally, we discuss basic facts on proper maps between Lie groupoids, to be used later.

\subsection{Definitions and examples}




Given a Lie groupoid $G=(G\toto M)$, we denote by $s,t,m,u,i$ its maps of source, target, multiplication, unit and inverses, and by $G^{(k)}$ the manifold of $k$-tuples of composable arrows. In particular, $G^{(1)}=G$ are the arrows and $G^{(0)}=M$ are the objects. By a {\bf map of Lie groupoids} $\phi:G\to H$ we mean a smooth functor, and we denote by $\phi^{(k)}:G^{(k)}\to H^{(k)}$ the induced smooth map. By convention, all our manifolds (including all our groupoids) are second countable and Hausdorff.


\begin{definition}
A map of Lie groupoids $\phi:\tilde G\to G$ is called a {\bf fibration} if both $\phi^{(0)}:\tilde M\to M$ and
$\hat\phi:\tilde G\to G\times_{M}\tilde M$, $\hat\phi(g)=(\phi^{(1)}(g),s(g))$,
are surjective submersions. 
We call $G$ the {\bf base groupoid} and $\tilde G$ the {\bf total groupoid}. 
\end{definition}


\begin{examples}\label{ex:fibrations}\

\begin{enumerate}
 \item 
 A fibration between manifolds $\phi:\tilde M\to M$, regarded as groupoids with only identity arrows, is a surjective submersion.
 A fibration between Lie groups $\phi:\tilde G\to G$, regarded as groupoids with only one object, is a surjective homomorphism. 

 \item By a {\bf family of Lie groupoids} parametrized by a manifold $M$ we mean a fibration $\phi:(\tilde G\toto \tilde M)\to (M\toto M)$ with base $M$. This amounts to give a surjective submersion $\tilde M\to M$ that is constant along the $\tilde G$-orbits. 
 
 \item If $G\toto M$ is a Lie groupoid acting along a surjective submersion $\mu:{\tilde M}\to M$, the projection from the corresponding action groupoid
 $(G\ltimes {\tilde M}\toto {\tilde M})\to (G\toto M)$ is a fibration. 
 Every fibration for which $\hat\phi$ is bijective arises in this way -- such a $\phi$ is called an {\bf action fibration}.
 
 \item If $G\toto M$ is a Lie groupoid, then the projections $TG\to G$ and $T^*G\to G$ are fibrations of the tangent and cotangent groupoids over $G$. More generally, for any VB-groupoid $\Gamma\to G$ (see, e.g., \cite{bcdh}) the projection is a fibration. 
 
 \item Morita maps play an important role in the theory of differentiable stacks. We will study them in section \ref{sec:stack}. If $\phi:\tilde G\to G$ is a Morita map such that $\phi^{(0)}$ is a surjective submersion, then $\phi$ is a fibration. 
\end{enumerate}
\end{examples}


If $\phi:\tilde G\to G$ is a fibration then $\phi^{(k)}$ is a submersion for every $k$. Moreover, by a standard transversality criterion (cf. \cite[A]{bcdh}), the groupoid-theoretic {\bf fiber product} of $\phi$ and any other groupoid map exists and is well-behaved with respect to the topologies and the tangent spaces. In such a fiber product,
$$\xymatrix@R=15pt{\ar@{}[dr]|{\rm fp}
\tilde H \ar[r] \ar[d]_{\psi} & \tilde G \ar[d]^{\phi} \\ H \ar[r] & G
}$$
if $\phi$ is a fibration, then the same holds for its {\bf base-change} $\psi$.



\begin{definition}
Given a fibration $\phi:\tilde G\to G$, 
its {\bf kernel} $K\toto\tilde M$ and its {\bf fibers} $\tilde G_x\toto\tilde M_x$ are the fiber products between $\phi$ and the groupoid inclusions $M\to G$ and $\id_x\to G$, respectively.
\end{definition}

The kernel $K$ consists of the arrows in $\tilde G$ that are mapped into identities. It is a family of Lie groupoids, the fibers, parametrized by the base. We remark that $\phi$ is an action fibration if and only if $K=\tilde M$, and $\phi$ is a Morita fibration if and only if $K=\tilde M\times_M\tilde M$.
In a fibration $\phi:\tilde G\to G$ we can think of the total groupoid $\tilde G$ as an extension of the base $G$ by the kernel $K$,
and visualize $\tilde G$ as sitting over $G$, $\phi$ as a projection, and $K$ as the vertical arrows.

\begin{definition}
A {\bf cleavage} $\sigma$ for a fibration $\phi:\tilde G\to G$ is a smooth section for the map $\hat\phi:\tilde G\to G\times_M\tilde M$.
The cleavage is {\bf unital} if it preserves identities, namely $\sigma(\id_{\phi(\tilde x)},\tilde x)=\id_{\tilde x}$, and is  {\bf flat} if it is closed under multiplication, namely
$\sigma(g_2g_1,\tilde x)=\sigma(g_2,\tilde{t}(\sigma(g_1,\tilde x)))\sigma(g_1,\tilde x)$. 
\end{definition}

One can think of a cleavage as a choice of horizontal arrows which allow us to relate different fibers, playing a role similar to a connection in differential geometry.
A fibration endowed with a unital flat cleavage is called a {\bf split fibration}.

\begin{examples}\label{ex:fibrations:cleavages}\

\begin{enumerate}
\item If $\phi:\tilde G\to G$ is a fibration between Lie groups, then a cleavage $\sigma$ is just a smooth section, and is unital and flat if and only if it is a morphism. Therefore, $\phi$ splits if and only if $\tilde G$ is a semi-direct product $G\ltimes K$. 

\item A family of Lie groupoids $\phi:\tilde G\to M$ is a split fibration, for the unit map $\tilde u:\tilde M\to \tilde G$ yields a unital, flat, cleavage in an obvious way. 

\item In an action fibration $(\tilde G\toto {\tilde M})\to (G\toto M)$ the map $\hat\phi$ is invertible, hence there exists a unique cleavage, which is both unital and flat.

\item Cleavages for the tangent bundle projection $TG\to G$ that are both linear and unital are called {\bf connections} of $G$ in \cite{ac,dhf}. Every Lie groupoid admits a connection, but in general there is not a flat one. 
Linear cleavages for a general VB-groupoid $\Gamma\to G$ are called {\bf horizontal lifts} in \cite{bcdh}.

\item For an example on which does not exists a cleavage, consider the Morita fibration $\phi:(\R\times\R\toto\R)\to(S^1\times S^1\toto S^1)$ induced by the standard covering map $\R\to S^1$.
\end{enumerate}
\end{examples}


Given a cleavage $\sigma$ and \smash{$y\xfrom gx\in G$}, we can define a {\bf base-change functor} between the fibers by parallel transport: 
$$\theta_g:\tilde G_x\to\tilde G_y \qquad
\theta_g(\tilde x)=\tilde t\sigma(g,\tilde x)\quad
\theta_g(\tilde x_2\xfrom k\tilde x_1)= \sigma(g,\tilde x_2)k\sigma(g,\tilde x_1)^{-1}$$
This defines {\bf quasi-actions} $\theta:G\tilde\action \tilde M$ and 
$\theta:G\tilde\action K$, for $g\mapsto\theta_g$ does not preserve, in general, identities nor compositions. It does so if and only if $\sigma$ is unital and flat.

\subsection{Split fibrations and semidirect product}
\label{section:split-fibration}

In a split fibration $\phi:\tilde G\to G$
the horizontal arrows define a wide Lie subgroupoid, the {\bf horizontal groupoid}
$$(H \toto \tilde M)\subset (\tilde G\toto \tilde M),$$
and the base-change functors define a groupoid action of the base over the kernel, in the sense of definition 
(\cite[Def. 2.5.1]{mkbook}), that we recall now.

\begin{definition}
Let $G\toto M$ be a Lie groupoid, and let $q:(K\toto \tilde M)\to M$ be a family of Lie groupoids parametrized by $M$. Then an {\bf action}
$$\theta:(G\toto M)\action (K\toto \tilde M)$$
consists of a pair of groupoid actions $\theta:G\action K$, $\theta:G\action \tilde M$, in such a way that the structural maps of $K\toto \tilde M$ are $G$-equivariant.
\end{definition}


The whole split fibration can be recovered out of the base, the kernel and the base-change action. For the inverse procedure, i.e., to construct a split fibration out of an action, we have the following generalization from groups to groupoids of the notion of semi-direct product (see, e.g., \cite{mkbook}):


\begin{definition}
Given an action $\theta:(G\toto M)\action (K\toto \tilde M)$, the {\bf semidirect product} $G\ltimes K =(G\times_M K \toto \tilde M)$ is the Lie groupoid where:
\begin{itemize}
\item the arrows consist of pairs $(g,k)$ such that $q(k)=s(g)$; 
\item the source and target are given by $s(g,k)=s(k)$, $t(g,k)=\theta_{g}(t(k))$; 
\item the multiplication is given by $(g',k')(g,k)=(g'g,\theta_{g^{-1}}(k')k)$.
\end{itemize}
\end{definition}

In the semi-direct product $G\ltimes K$, the obvious projection $(g,k)\mapsto g$ defines a split fibration 
$(G\times_M K \toto \tilde M)\to (G\toto M)$,
with a canonical unital flat cleavage given by $\sigma(g,\tilde x)=(g,\id_{\tilde x})$.
These two constructions are mutually inverse:

\begin{theorem}[{\cite[Thm 2.5.3]{mkbook}}]
There is a 1-1 correspondence between (isomorphisms classes of) split fibrations and actions of a groupoid over another one. Every split fibration is isomorphic to a semi-direct product.
\end{theorem}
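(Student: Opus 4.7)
The plan is to verify that the two constructions in the text — namely, extracting an action from a split fibration via its cleavage, and building the semidirect product out of an action — are mutually inverse up to isomorphism, and natural in morphisms, so that they descend to a bijection between isomorphism classes.

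One direction is essentially definitional. Starting from an action $(G\toto M)\action (K\toto \tilde M)$ and forming $G\ltimes K$ with its canonical unital flat cleavage $\sigma(g,\tilde x)=(g,\id_{\tilde x})$, the kernel of the projection $G\ltimes K\to G$ is tautologically $K$, and reading off the quasi-actions on $\tilde M$ and on $K$ attached to this cleavage recovers the original $G$-action in each case. This is a direct unwinding of definitions.

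For the converse, let $\phi:\tilde G\to G$ be a split fibration with cleavage $\sigma$ and kernel $K$. The crucial observation is that every arrow $\tilde g:\tilde y_1\to\tilde y_2$ over $g:=\phi(\tilde g):x\to y$ admits a unique horizontal-vertical factorization
$$\tilde g=\sigma(g,\tilde z)\cdot k,\qquad \tilde z:=\theta_{g^{-1}}(\tilde y_2),\quad k\in K,$$
since matching $t(\sigma(g,\tilde z))=\tilde y_2$ forces $\tilde z$, and $\phi(k)=g^{-1}g=1_x$ then places $k$ in the kernel with $s(k)=\tilde y_1$, $t(k)=\tilde z$. This yields a smooth assignment $\Phi:\tilde G\to G\ltimes K$, $\tilde g\mapsto(g,k)$, inverted by $\Psi(g,k)=\sigma(g,t(k))\cdot k$; smoothness of both follows from that of $\sigma$, $\theta$ and the groupoid operations, together with the fact that $\phi'$ is a submersion admitting $\sigma$ as a smooth section.

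It remains to check that $\Phi$ is a Lie groupoid isomorphism over $\id_G$ intertwining $\sigma$ with the canonical cleavage of $G\ltimes K$. Compatibility with source, target, units and inverses is straightforward; the heart of the matter is multiplicativity, which uses precisely the flatness of $\sigma$, namely $\sigma(g_1g_2,\tilde z)=\sigma(g_1,\theta_{g_2}(\tilde z))\,\sigma(g_2,\tilde z)$, together with $G$-equivariance of the structural maps of $K\toto\tilde M$, to match the semidirect-product rule $(g',k')(g,k)=(g'g,\theta_{g^{-1}}(k')k)$. Naturality of both constructions under isomorphisms is then routine, yielding the claimed bijection on isomorphism classes. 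The main obstacle is exactly this multiplicativity check, and both flatness and unitality of the cleavage are the hypotheses needed to make it work — confirming the role of \emph{split} in the statement.
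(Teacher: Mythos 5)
Your argument is correct: the horizontal--vertical factorization $\tilde g=\sigma(g,\tilde z)\cdot k$ with $\tilde z=\theta_{g^{-1}}(\tilde t(\tilde g))$, the explicit inverse $\Psi(g,k)=\sigma(g,t(k))\cdot k$, and the multiplicativity check via flatness of $\sigma$ constitute the standard proof of this correspondence. The paper itself gives no proof, deferring entirely to Mackenzie's Theorem 2.5.3, and your argument is essentially the one found there, so there is nothing to reconcile.
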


\begin{remark}
This can be seen as a smooth version of the correspondence between categorical split fibrations $\phi:\tilde G\to G$ and functors $G^\circ\to\{\text{Categories}\}$,
which extends to a correspondence between arbitrary fibrations and pseudo-functors (see, e.g., \cite[Sec.~3.1]{vistoli})).
A smooth correspondence for arbitrary fibrations is yet to be explored. 
\end{remark}



Recall that given $\phi_1:G_1\to H$ and $\phi_2:G_2\to H$ Lie groupoid maps, their {\bf homotopy fiber product} $G_1\tilde\times_H G_2$ (see, e.g., \cite{dh, mm}) is the Lie groupoid of triples \smash{$(x_1,\phi_1(x_1)\xfrom h\phi_2(x_2),x_2)$}, where an arrow between two such objects is a pair of arrows in $G_1\times G_2$ inducing a commutative square in $H$.  The homotopy fiber product fits into a universal square, commutative up to isomorphism.


Every Lie groupoid map $\phi:G\to H$ has a {\bf canonical factorization},
a formal analog to the path fibration used in topology to show that any map is a fibration up to homotopy:
$$\xymatrix@R=10pt{
 & G' \ar[dr]^{\phi'} 
 \\ G \ar[ur]^{\iota} \ar[rr]_\phi & & H
}$$
Here $G'$ is the homotopy fiber product $H\tilde\times_{H} G$ over $\id_H$ and $\phi$, which always exists, the maps $\iota$ and $\phi'$ are given, on objects, by 
\smash{$\iota:x\mapsto (\phi(x),\phi(x)\xfrom\id \phi(x),x)$} and \smash{$\phi':(y,y\xfrom h \phi(x),x)\mapsto y$}, and they are extended to arrows in the obvious way. It is clear that $\iota$ is an embedding and a categorical equivalence.


\begin{lemma}\label{lemma:associated-fibration}
If $\phi:G\to H$ is a fibration then $\phi':G'\to H$ is a split fibration with a canonical cleavage. A unital cleavage $\sigma$ for $\phi$ is the same as a Lie groupoid retraction $r:G'\to G$ such that $r\iota=\id_G$ and $\phi r=\phi'$.
\end{lemma}

\begin{proof}
By assumption $\phi^{(0)}:M\to N$ is a surjective submersion, then so does the base-change $\pi_1:M'=H\times_NM\to H$, and the composition $\phi'^{(0)}=t\pi_1$. Let us show that $G'\to H\times_NM'$ is a surjective submersion as well. We can identify $G'\cong H\times_N H\times_N G$ by encoding an arrow
$$(y_2,y_2\xfrom {h_2} \phi(x_2),x_2)\from
(y_1,y_1\xfrom {h_1} \phi(x_1),x_1)$$
as the triple $(h,h_1,g)$. Under this identificaiton, $G'\to H\times_NM'=H\times_NH\times_NM$ is just $(h,h_1,g)\mapsto (h,h_1,s(g))$, which is a submersion, with global section $(h,h_1,x)\mapsto(h,h_1,\id_x)$.
It follows that $\phi'$ is a fibration, and this section is a unital flat cleavage.
The second statement, identifying cleavages of $\phi$ with retractions $r:G'\to G$ is proven set-theoretically in \cite[Prop. 2.2.3]{dh0}, and its adaptation to the smooth setting is straightforward.
\end{proof}

\subsection{Fibrations and properness}


Let us recall that an (Hausdorff) groupoid $G\toto M$ is called {\bf proper} if the anchor $(t,s):G\to M\times M$ is a proper map. Concerning properness, for a split fibration, we have:

\begin{lemma}\label{lemma:proper-semidirect-product}
Let $\phi:\tilde G\to G$ be a split fibration with kernel $K$ and horizontal groupoid $H$.
\begin{enumerate}[(a)]
 \item If $G$ and $K$ are proper, then $\tilde G$ is proper,
 \item If $\tilde G$ is proper then both $H$ and $K$ are proper.
\end{enumerate}
\end{lemma}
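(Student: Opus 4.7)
The plan is to work throughout with the semidirect product model $\tilde G\cong G\ltimes K$ provided by the correspondence theorem, so that an arrow in $\tilde G$ is a pair $(g,k)$ with $q(k)=s(g)$, whose source and target in $\tilde M$ are $s(k)$ and $\theta_g(t(k))$. With this description in hand both statements reduce to tracking convergence across the semidirect product decomposition.

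For (a), I would start from a sequence $(g_n,k_n)$ in $\tilde G$ whose anchor $(\theta_{g_n}(t(k_n)),s(k_n))$ converges in $\tilde M\times\tilde M$ to some $(\tilde y,\tilde x)$. Pushing down via $q:\tilde M\to M$ shows the anchor $(t(g_n),s(g_n))$ of $g_n$ converges, so properness of $G$ yields a subsequence with $g_n\to g$. Because the unital flat cleavage promotes $\theta$ to an honest smooth action $G\action \tilde M$ in which each $\theta_g$ is a diffeomorphism between the relevant fibers of $q$, continuity gives $t(k_n)=\theta_{g_n^{-1}}(\theta_{g_n}(t(k_n)))\to\theta_{g^{-1}}(\tilde y)$. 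Combined with $s(k_n)\to\tilde x$, this shows the anchor of $k_n$ in $K$ converges, and properness of $K$ supplies a convergent subsequence of $(k_n)$, hence of $(g_n,k_n)$.

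For (b), the idea is simply that $H$ and $K$ are closed Lie subgroupoids of $\tilde G$, so their anchors are restrictions of the proper anchor of $\tilde G$ to closed subsets. Indeed, $K=\phi^{-1}(M)$ is closed because $M$ is closed in $G$; and in the model $\tilde G\cong G\ltimes K$ the horizontal groupoid is $H=\{(g,\id_{\tilde x}):s(g)=q(\tilde x)\}$, which is the preimage of the unit submanifold $\tilde M\hookrightarrow K$ under the smooth projection $(g,k)\mapsto k$, hence closed. Since restrictions of proper maps to closed subsets are proper, both $H$ and $K$ are proper Lie groupoids.

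The proof is mostly bookkeeping around the semidirect product description. The only conceptual ingredient, and the step I would expect to be the main (though quite mild) obstacle, is the decomposition of convergence in $\tilde G$ into independent convergence of $g_n$ in $G$ and $k_n$ in $K$, which is exactly what the smoothness of the action $\theta$ coming from the unital flat cleavage provides.
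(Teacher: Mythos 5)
Your proof is correct and follows essentially the same route as the paper's: part (a) is the identical subsequence-extraction argument through the semidirect product decomposition (your $\theta_{g_n^{-1}}(\theta_{g_n}(t(k_n)))$ is the paper's $g_n^{-1}y_n$), and part (b) is the same reduction to ``closed wide subgroupoids of proper groupoids are proper.'' The only cosmetic difference is in how you see that $H$ is closed --- as the preimage of the unit submanifold of $K$ under $(g,k)\mapsto k$ rather than as the image of the cleavage, a section of the surjective submersion $\phi'$ --- but both rest on the same fact that sections of surjective submersions have closed image.
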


\begin{proof}
We can assume that $\tilde G = G\times_M K$. To prove (a), denoting by $\tilde{s}$ and $\tilde{t}$ the source and target of $\tilde G$, we find that if $C\subset \tilde M\times \tilde M$ then:
\[ (\tilde{s}\times\tilde{t})^{-1}(C)\subset (G\times_M K)\cap \left( (s_G\times t_G)^{-1}(\phi^{(0)}\times\phi^{(0)})(C))\times(s_K\times t_K)^{-1}(C)\right)\]
If $C$ is compact, then the right-hand side is clearly compact, so we must have that $(\tilde{s}\times\tilde{t})^{-1}(C)$ is compact. This shows that $\tilde G=G\times_M K$ is proper.

We can prove (b) by using that a closed subgroupoid of a proper groupoid is proper. Since a section of a surjective submersion is a closed map, the set of horizontal arrows $H\subset \tilde G$ is a closed submanifold. Similarly, $M\subset G$ is a closed submanifold, and therefore \smash{$K={\phi^{(1)}}^{-1}(M)\subset \tilde G$} is also closed.
\end{proof}


\begin{example}
Both implications in the previous lemma are, in general, strict.
For instance, the action by translations $(\R\toto\ast)\action(\R\toto\R)$ leads to a split fibration with proper total groupoid but whose base is not proper. 
Besides, translations on the units and arrows of the pair groupoid $(\R\toto\ast)\action(\R\times\R\toto\R)$ yields a fibration for which kernel and horizontal groupoids are proper but the total groupoid is not.
\end{example}


Besides proper groupoids, we are interested in proper maps between groupoids, which play a crucial role in one of our main results, namely the groupoid version of Ehresmann's theorem. We propose the following definition:

\begin{definition}\label{def:proper-map}
A map between Lie groupoids $\phi:\tilde G\to G$ is {\bf proper} if $\phi^{(1)}$ is, or equivalently, if $\phi^{(k)}$ is proper for some, and hence all, $k\geq 1$.
\end{definition}


In a proper fibration, the total groupoid is proper if and only if the base groupoid is proper. 
Also, one can show that a fibration is proper if and only if its kernel is a {\bf proper family}, namely a fibration $\phi:({\tilde G\toto \tilde M})\to ({M\toto M})$ that is proper. These statements are easy to prove and left to the reader.

Properness of a map is intimately related to the tube principle, see e.g.~\cite{dh}. Within the context of Lie groupoids, we say that a map $\phi:\tilde G\to G$ satisfies the {\bf tube principle} if for any saturated embedded submanifold $S\subset M$ and any open subgroupoid $\tilde U=(\tilde U_1\toto \tilde U_0)$, $\tilde G_{\tilde S}\subset \tilde U\subset \tilde G$, where $\tilde S=(\phi^{(0)})^{-1}(S)$, there exists an open subgroupoid $G_S\subset V\subset G$ such that $\phi^{-1}(V)\subset \tilde U$.

\begin{proposition}
\label{prop:tube}
Let $\phi:\tilde G\to G$ be a proper submersion. If either (i) $G$ is proper or (ii) $\phi$ is a fibration, then $\phi$ satisfies the tube principle.
\end{proposition}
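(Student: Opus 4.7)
The plan is to apply the classical tube principle for the proper map $\phi^{(1)}\colon\tilde G\to G$ to produce an open set $W\subset G$ bracketed between $G_S$ and the preimage condition, and then to refine $W$ into an open subgroupoid using the extra hypothesis in each case. Since $\phi^{(1)}$ is proper, hence closed, the set $W := G\setminus\phi^{(1)}(\tilde G\setminus\tilde U) = \{g\in G: \phi^{-1}(g)\subset\tilde U\}$ is open, contains $G_S$ (because $\phi^{-1}(G_S)=\tilde G_{\tilde S}\subset\tilde U$), and satisfies $\phi^{-1}(W)\subset\tilde U$. Moreover $W$ inherits inverse-closure from $\tilde U$: for $g\in W$ one has $\phi^{-1}(g^{-1})=(\phi^{-1}(g))^{-1}\subset\tilde U^{-1}=\tilde U$, so $g^{-1}\in W$.

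\emph{Case (ii): $\phi$ a fibration.} I expect that $V:=W$ is already multiplicatively closed, hence an open subgroupoid. Given composable $g_1,g_2\in W$ and any $\tilde h\in\phi^{-1}(g_1g_2)$, the fibration property lifts $g_2$ to an arrow $\tilde g_2$ with $\tilde s(\tilde g_2)=\tilde s(\tilde h)$ and $\phi(\tilde g_2)=g_2$, and then $\tilde g_1:=\tilde h\,\tilde g_2^{-1}$ is a lift of $g_1$. Both $\tilde g_i\in\phi^{-1}(g_i)\subset\tilde U$, and since $\tilde U$ is a subgroupoid, $\tilde h=\tilde g_1\tilde g_2\in\tilde U$, so $g_1g_2\in W$. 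Closure under units is handled by a parallel lift-and-factor argument using the kernel of $\phi$.

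\emph{Case (i): $G$ proper.} Here $W$ need not be multiplicatively closed, so I must shrink it. The strategy is to invoke the authors' linearization theorem for proper Lie groupoids: there is an open subgroupoid $V_0\subset G$ containing $G_S$, isomorphic to an open neighborhood of the zero section in the VB-groupoid $\nu(G_S)\toto\nu(S)$. Pulling $V_0\cap W$ back to the linear model gives an open neighborhood of the zero section in $\nu(G_S)$, and inside a VB-groupoid such a neighborhood contains a sub-subgroupoid by fiberwise radial shrinking---choosing a continuous function $\varepsilon\colon S\to\R_{>0}$ small enough that the resulting tube of radius $\varepsilon$ both stays inside $V_0\cap W$ and is closed under the (linear) VB-groupoid operations.

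The main obstacle is the refinement in case (i). One cannot produce the required open subgroupoid merely from the closedness of $(s,t)(G\setminus W)\subset M\times M$ by a naive tube-lemma argument, because such an argument fails for non-compact $S$; rather one must exploit the local normal form of a proper Lie groupoid around a saturated submanifold. Case (ii) is substantially cleaner because the fibration structure makes $W$ itself into a subgroupoid, so no further shrinking is needed.
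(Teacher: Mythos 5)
Your construction of $W=G\setminus\phi^{(1)}(\tilde G\setminus\tilde U)$ and your treatment of case (ii) coincide with the paper's proof: there too one lifts $g_2$ to an arrow with the same source as $\tilde h$, factors $\tilde h$ through $\tilde U$, and concludes that $W$ is already an open subgroupoid. Up to that point the proposal is essentially the paper's argument.

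Case (i) is where you diverge, and where there is a genuine gap. The paper does \emph{not} linearize: it invokes (the proof of) \cite[Lemma 5.1.3]{dhf}, an elementary point-set lemma for proper groupoids asserting that any open $W\supset G_S$ contains $G_U$ for some open $U\supset S$ in $M$, and then simply takes $V=G_U$, which is automatically a full open subgroupoid. Your alternative --- pass to the linear model via the linearization theorem and then ``choose $\varepsilon:S\to\R_{>0}$ small enough'' --- does not actually dispose of the difficulty you yourself identify as the main obstacle. Two problems. First, the full subgroupoid of $\nu(G_S)$ over an $\varepsilon$-tube in $\nu(S)$ is \emph{automatically} closed under the groupoid operations, so closure is not the issue; the issue is whether that full subgroupoid is contained in (the pullback of) $V_0\cap W$. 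Since $\d s:\nu_g(G_S)\to\nu_{s(g)}(S)$ is a fiberwise isomorphism, this containment requires a radius that works uniformly over the (generally non-compact) $s$-fibers of $G_S$, which is exactly the same uniformity problem as in the original groupoid; the linear structure buys you nothing here, and your one-line ``choose $\varepsilon$ small enough'' is precisely the content of the lemma the paper cites. Second, your diagnosis that ``tube-lemma arguments fail for non-compact $S$'' is correct only for the naive version (closedness of $(t,s)(G\setminus W)$ alone does not suffice); the cited lemma is a refined tube lemma that exploits properness of the anchor, and it is both available and far more elementary than the linearization theorem you propose to invoke. So the right fix is to replace your case (i) by an appeal to that shrinking lemma (or to reprove it), rather than to linearize.
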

\begin{proof}
Given $S\subset M$ and an open subgroupoid $\tilde G_{\tilde S}\subset \tilde U\subset \tilde G$, let us set:
$$W:=G\setminus(\phi^{(1)}(\tilde G\setminus\tilde U)).$$ 
Notice that $W$ is an open set, since $\phi^{(1)}$ is a proper map, hence closed.

(i) If $G$ is proper then there is an open $S\subset U\subset M$ such that $G_U\subset W$ (see \cite[Lemma 5.3]{dhf}) and we can take $V=G_U$. To be precise, this follows from the proof rather than from the statement: the requirement of the open neighborhood being a subgroupoid is not used along the proof.

(ii) If $\phi$ is a fibration then we assert that $W$ is already an open subgroupoid, so we can take $V=W$. In fact, given $g,h\in W$ two composable arrows, and given $\tilde k$ an arrow over the product $gh$, we can lift $h$ to an arrow $\tilde h$ with the same source as $\tilde k$. Then both $\tilde h$ and $\tilde k\tilde h^{-1}$ are in $\tilde U$, for its projections are in $W$. Since $\tilde U$ is closed under products it follows that $\tilde k$ also belongs to $\tilde U$. This shows that $gh$ is in $W$. The fact that $W$ is closed under inversion is proved similarly.
\end{proof}


\section{Riemannian submersions}
\label{sec:riemannian:subm}

We recall the definition and main properties of Riemannian groupoids presented in \cite{dhf}, in particular the gauge trick, a recipe to construct metrics on proper groupoids. Then we develop a fibered version of Haar systems, and we adapt the gauge trick to fibrations,
obtaining our first major result: every split fibration between proper groupoids can be made Riemannian.

\subsection{Riemannian groupoids}
\label{sec:review}

Recall that a submersion $p:E\to B$ is {\bf Riemannian} if $E,B$ are manifolds endowed with metrics $\eta^E,\eta^B$ such that $\d_ep:T_eF^\bot\to T_bB$ is an isometry for every $e\in E$. Here $F\subset E$ denotes the fiber. Equivalently, we can require the map $(d_ep)^*:T_b^*B\to T_eF^\circ$ to be an isometry with respect to the dual metrics. 
Given a Riemannian submersion $p:E\to B$, the metric $\eta^E$ is {\bf $p$-transverse}, in the sense that for any two points $e,e'$ in the same fiber $F_b$ the composition $T_eF_b^\circ\cong T_b^*B\cong T_{e'}F^\circ$ is an isometry. Any $p$-transverse metric induces a {\bf push-forward metric} $p_*\eta^E=\eta^B$ on the base. 


Given a Lie groupoid $G\toto M$, and recalling that $\G2$ stands for the pairs of composable arrows, there is a natural group action $S_3\action \G2$ by permuting the vertices of the corresponding commutative triangles. There are also three proper and free commuting groupoid actions $G\action\G2$, given by:
$$k\cdot (g,h)=(kg,h), \qquad k\cdot (g,h)=(gk^{-1},kh), \qquad k\cdot (g,h)=(g,hk^{-1}).$$
The associated principal $G$-bundles are given, respectively, by the face-maps:
\begin{align*} 
\pi_2:\G2\to G, &&m:\G2\to G, &&\pi_1:\G2\to G,\\
(g,h)\mapsto h, &&(g,h)\mapsto gh, &&(g,h)\mapsto g.
\end{align*}


\begin{definition}
A {\bf 2-metric} on $G\toto M$ is a metric $\e2$ on the manifold $\G2$ which is invariant under $S_3$ and that is transverse to one (and hence all) the face-maps $\G2\to G$. The pair $(G\toto M,\e2)$ is called a \textbf{Riemannian groupoid}.
\end{definition}

Equivalently, 2-metrics can be described, and this is how it appears in \cite{dhf}, by saying that they are $G$-invariant. Given a Lie groupoid action over a manifold $G\action E$, a metric $\eta$ on $E$ is $G$-invariant if the normal representations of the action groupoid 
$\tilde\theta_{(g,e)}:T_eO^\circ\cong T_{ge}O^\circ$ are by isometries. Here $O\subset E$ denotes the orbit of $e$. Note that when the action is free and proper, a metric $\eta^E$ is $G$-invariant if and only if it is transverse to the underlying submersion.


It is shown in \cite{dhf} that many classes of Lie groupoids admit 2-metrics, including all proper groupoids. Our construction of metrics on proper groupoids is by averaging  metrics. To do this we choose a connection on $G$ (cf.~Example \ref{ex:fibrations:cleavages}.d), yielding a quasi-action $G\tilde\action S^2(TG^{(2)})$ on symmetric 2-tensors, and a normalized {\bf Haar density} $\mu^G$, that exists by properness. 
We give here a brief outline for later reference. 


\begin{definition}
\label{defn:cotg:average}
Given $\theta:G\action E$ a groupoid action, and $\eta$ a metric on $E$, the {\bf cotangent average} $\underline{\eta}\in \Gamma(E,S^2(T^*E))$ 
is defined by averaging the dual metric:
$$(\underline\eta)^*_e(\alpha,\beta):=I_\theta(\eta^*)_e(\alpha,\beta)= \int_{G(-,x)} \eta^*_{ge}(g\alpha,g\beta) \mu^x(g) $$
where $q:E\to M$ is the moment, $x=q(e)$, $g$ runs over $G(-,x)$, and  $\alpha,\beta\in T_e^*E$.
\end{definition}

One must use cotangent averages rather than tangent averages, as explained in \cite{dhf}, 
to get a {\bf $G$-invariant} metric on $E$, for $G$-invariant metrics on the tangent bundle do not form a convex set. 
Note that when the action $G\action E$ is free and proper, a metric $\eta^E$ is $G$-invariant if and only if it is transverse to the underlying submersion. The fundamental properties of the cotangent average are listed in the following proposition, whose proof is straightforward.

\begin{proposition}\label{prop:averaging-properties}\

\begin{enumerate}[(a)]
\item If $\eta$ is $G$-invariant, then $\eta$ and $\underline\eta$ agree in the directions normal to the orbits.
\item If $p:(E,\eta^E)\to (B,\eta^B)$ is a Riemannian submersion equivariant for actions $\theta^E:G\action E$ and $\theta^B:G\action B$, then $p:(E,\underline\eta^E)\to(B,\underline\eta^B)$ is also Riemannian.
\item If $p:(E,\eta^E)\to (B,\eta^B)$ is a Riemannian submersion and invariant for an action $\theta^E:G\action E$, then $p:(E,\underline\eta^E)\to(B,\eta^B)$ is also Riemannian.
\end{enumerate}
\end{proposition}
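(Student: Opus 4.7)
The strategy is uniform across the three items: unwind the definition
$$(\underline\eta)^*_e(\alpha,\beta) = \int_{G(-,x)} \eta^*_{ge}(g\alpha, g\beta)\,\mu^x(g)$$
and use the compatibility of $p$ with the $G$-action to commute $g\cdot(-)$ with $p^*$. Once this is done, each integrand either becomes constant in $g$ (so that the normalization of the Haar density collapses the integral) or matches exactly the integrand defining the averaged metric on the target.

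For (a), suppose $\alpha,\beta \in T_eO^\circ$ are conormal to the orbit $O$ through $e$. The hypothesis that $\eta$ is $G$-invariant says precisely that the normal representation $\alpha\mapsto g\alpha$ is a dual isometry $(T_eO^\circ,\eta^*_e)\to (T_{ge}O^\circ,\eta^*_{ge})$, so $\eta^*_{ge}(g\alpha, g\beta) = \eta^*_e(\alpha,\beta)$ for every $g\in G(-,x)$. The integrand is then constant and $\int\mu^x=1$ gives $(\underline\eta)^*_e(\alpha,\beta) = \eta^*_e(\alpha,\beta)$. Dualizing on $\nu(O)\cong (T_eO^\circ)^*$ yields the stated agreement of $\underline\eta$ and $\eta$ in the normal directions.

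For (b), equivariance $p(g\cdot e) = g\cdot p(e)$ translates at the cotangent level to $g\cdot p^*\alpha = p^*(g\alpha)$ for $\alpha\in T^*_{p(e)}B$, and it also forces the two moment maps to agree along $p$, so the integration domains $G(-,x)$ entering $\underline\eta^E$ at $e$ and $\underline\eta^B$ at $b=p(e)$ coincide. Substituting $p^*\alpha,p^*\beta$ into the definition of $(\underline\eta^E)^*_e$, commuting $g$ past $p^*$, and applying the Riemannian-submersion identity $(\eta^E)^*_{ge}(p^*\gamma,p^*\delta) = (\eta^B)^*_{p(ge)}(\gamma,\delta)$ to $\gamma=g\alpha,\delta=g\beta$, the integral becomes exactly the one defining $(\underline\eta^B)^*_b(\alpha,\beta)$. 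Item (c) is the same computation except that $G$-invariance $p(g\cdot e) = p(e)$ collapses $g\cdot p^*\alpha$ to $p^*\alpha$ at the base point $ge$; the integrand reduces to the constant $(\eta^B)^*_b(\alpha,\beta)$, and normalization of $\mu^x$ concludes.

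The whole argument is essentially one line of integral manipulation, repeated three times with the appropriate invariance or equivariance substitution. The only care required is bookkeeping around the groupoid convention for the action on cotangents and checking that the moment maps identify the integration domains; no real obstacle is anticipated beyond this routine verification.
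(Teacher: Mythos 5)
Your proof is correct and is precisely the routine verification the paper has in mind: the paper states that the proof of Proposition \ref{prop:averaging-properties} is straightforward and omits it, and your unwinding of the cotangent average, using the canonical normal representation for (a), the intertwining $g\cdot p^*\alpha = p^*(g\alpha)$ together with the coincidence of moment maps for (b), and the collapse of the integrand to a constant plus normalization of the Haar density for (c), is exactly that intended argument.
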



We can summarize now our method to construct 2-metrics on proper  groupoids, referred to as the {\em gauge trick} in \cite{dhf}, that will be adapted later to the fibered case:

\begin{remark}[Gauge Trick]\label{recipe}\

\begin{enumerate}[(a)]
 \item One starts by invoking the existence of an $s$-transverse metric $\eta=\eta^{[1]}$ on $G$, since any submersion admits a transverse metric.
 \item Then, one endows the $3$-fold $s$-fiber product $G^{[3]}$ with the {\it fiber product metric} of $\eta^{[1]}$ with itself, denoted $\eta^{[3]}$ (see \cite[Remark 2.5]{dhf}). This is a 2-metric $\eta^{[3]}$ for the submersion groupoid 
 $G^{[2]}\toto G$ arising from $s:G\to M$.
 \item Finally, one replaces $\eta^{[3]}$ by its cotangent average $\underline{\eta^{[3]}}$ with respect to the free proper action
 $$G^{[3]}\raction G \qquad (h_1,h_2,h_3)\cdot g= (h_1g,h_2g,h_3g).$$
The metric $\underline\eta^{[3]}$ is still a 2-metric for $G^{[2]}\toto G$, but in addition it is $G$-invariant, so it descends to a metric $\eta^{(2)}$ on $G^{(2)}=G^{[3]}/G$, that is  a 2-metric on $G\toto M$.
\end{enumerate}
\end{remark}

We know that a 2-metric induces a 1-metric and also a 0-metric. The gauge trick yields out of an $s$-transverse metric $\eta$ on $G$ a 2-metric $\eta^{(2)}$, but also subsidiary metrics that make all maps in the following diagram Riemannian submersions:
$$\xymatrix{
(G^{[3]},\underline\eta^{[3]}) \ar@<0.30pc>[r] \ar[r] \ar@<-0.30pc>[r] \ar[d]& (G^{[2]}, \underline\eta^{[2]})\ar@<0.2pc>[r]\ar@<-0.2pc>[r] \ar[d]& (G,\underline\eta) \ar[d]^t\\
(\G2,\eta^{(2)}) \ar@<0.30pc>[r] \ar[r] \ar@<-0.30pc>[r]& (\G1,\eta^{(1)}) \ar@<0.2pc>[r]\ar@<-0.2pc>[r] & (M,\eta^{(0)}).
}$$
The induced metrics $\underline\eta^{[2]}$ and $\underline\eta$ are exactly the cotangent averages for the obvious right actions (cf.~Proposition \ref{prop:averaging-properties}). In general, we have little control over the resulting metrics for they rely on the choice of connection and Haar system. Even if we start with a 1-metric $\eta$, in general, we have $\underline\eta^{(1)}\ne\eta^{(1)}$. Still, we have:

\begin{lemma}\label{lemma:trick-fix}
If $\eta$ is a 1-metric on $G\toto M$ and $\eta^{(2)}$ is the 2-metric obtained by the gauge trick, then both $\eta$ and $\eta^{(2)}$ induce the same 0-metric on the units.
\end{lemma}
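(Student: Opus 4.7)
My plan is to identify the two $0$-metrics as explicit pushforwards and then exploit Proposition \ref{prop:averaging-properties}(a). Since $\eta$ is a $1$-metric, the target map $t\colon (G,\eta)\to (M,\e{0})$ is a Riemannian submersion, so the $0$-metric induced by $\eta$ is precisely $\e{0}=t_*\eta$. On the other hand, by the description of the gauge trick in Remark \ref{recipe} and the commutative diagram following it, the $1$-metric on $G$ induced by $\e{2}$ is the cotangent average $\underline\eta$ (for the right $G$-action on itself by right multiplication), and the $0$-metric on $M$ induced by $\e{2}$ is the further pushforward $t_*\underline\eta$. The lemma therefore reduces to the equality $t_*\eta=t_*\underline\eta$.

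To set up the application of Proposition \ref{prop:averaging-properties}(a), I will note that the orbits of the right $G$-action on $G$ used in the averaging coincide with the $t$-fibers: for $hg$ composable one has $t(hg)=t(h)$, so the orbit of $g$ is $t^{-1}(t(g))$. Consequently, the conormal bundle to the orbits is exactly $\d t^*(T^*M)\subset T^*G$. The hypothesis that $\eta$ is a $1$-metric ensures $t$ is a Riemannian submersion, which is equivalent to the statement that the $G$-action on these conormals is by isometries---each conormal space being canonically identified with $T^*_{t(g)}M$ via $\d t^*$. Hence $\eta$ is $G$-invariant in the sense required by the proposition.

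Applying Proposition \ref{prop:averaging-properties}(a) will then give that $\eta$ and $\underline\eta$ agree on directions normal to the orbits; more precisely, unwinding the definition of the cotangent average, $\eta^*$ and $\underline\eta^*$ coincide on the conormal subbundle $\d t^*(T^*M)$. Dualizing, this is exactly the equality $t_*\eta=t_*\underline\eta$ of pushforward metrics on $M$, completing the proof. The step that demands the most care is correctly identifying the right-action orbits with $t$-fibers (rather than $s$-fibers, as one might naively expect given that $s$ is the moment map) and then deducing $G$-invariance of $\eta$ from the Riemannian property of $t$; once this is in place, the rest is a direct application of the averaging properties.
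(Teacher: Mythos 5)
Your proof is correct and follows essentially the same route as the paper's: reduce to $t_*\eta=t_*\underline\eta$ via the diagram after Remark \ref{recipe}, observe that a $1$-metric is invariant for the right action (whose orbits are the $t$-fibers), and apply the averaging property to conclude that $\eta^*$ and $\underline\eta^*$ agree on the conormal bundle $\d t^*(T^*M)$. You are merely more explicit than the paper, and you correctly invoke Proposition \ref{prop:averaging-properties}(a) where the paper's proof cites item (c) but clearly means (a).
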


\begin{proof}
This follows from the above diagram: since $\eta$ is $G$-invariant then, by  Proposition \ref{prop:averaging-properties} (c), both $\eta$ and $\underline\eta$ agree in normal directions to the orbits, and therefore they have the same pushforward metric along $t$.
\end{proof}


\subsection{Riemannian submersions}

\label{subsection:riemannian-submersion}


We are interested in maps between Riemannian groupoids which preserve the metrics:

\begin{definition}
A Lie groupoid map $\phi:\tilde G\to G$ is a called a {\bf Riemannian submersion} if both $\tilde G$ and $G$ are endowed with 2-metrics, for which $\phi^{(2)}$ becomes a Riemannian submersion.
\end{definition}

Note that if $\phi^{(2)}$ is a Riemannian submersion then the same holds for $\phi^{(1)}$ and $\phi^{(0)}$. 
Requiring the metrics to be compatible both with the groupoid structure and the submersion may seem too restrictive a priori. However, we will show that such metrics exist when the groupoids are proper and $\phi$ is a split fibration. This is our first major result and yields plenty of examples on which such metrics exist. It will be used later to deduce some fundamental properties of groupoids and stacks.


\begin{theorem}
\label{thm:Riemannian:submersions}
Every split fibration $\phi:\tilde G\to G$ between proper groupoids can be made Riemannian, i.e., there exist 2-metrics $\tilde\eta$ on $\tilde G$ and $\eta$ on $G$ for which $\phi$ is a Riemannian submersion.
\end{theorem}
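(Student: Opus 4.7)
The plan is to run the gauge trick of Remark \ref{recipe} on both groupoids in parallel, arranging every piece of input data on $\tilde G$ so that it projects compatibly to the corresponding data on $G$. Using the correspondence between split fibrations and semi-direct products, I would identify $\tilde G \cong G \ltimes K$, where $K$ is the kernel of $\phi$; since $\tilde G$ is proper, Lemma \ref{lemma:proper-semidirect-product}(b) then gives that $K$ is a proper family, and in particular each source fiber $K(-,\tilde x)$ is compact.

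The first step is to choose compatible transverse metrics: pick any $s$-transverse metric $\eta$ on $G$, choose a horizontal distribution complementary to $\ker\d\phi^{(1)}\subset T\tilde G$, and declare $\tilde\eta$ to be the orthogonal sum of $(\phi^{(1)})^*\eta$ on the horizontal distribution with any fiberwise metric on $\ker\d\phi^{(1)}$. By construction $\phi^{(1)}:(\tilde G,\tilde\eta)\to(G,\eta)$ is a Riemannian submersion, and since the $\tilde s$-fibers of $\tilde G$ project onto the $s$-fibers of $G$ (as $\phi$ is a fibration), $\tilde\eta$ is automatically $\tilde s$-transverse. Passing to the $3$-fold source fiber products one obtains an equivariant Riemannian submersion $\phi^{[3]}:(\tilde G^{[3]},\tilde\eta^{[3]})\to(G^{[3]},\eta^{[3]})$ intertwining the right actions of $\tilde G$ and $G$ used in the gauge trick.

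Next I would select compatible Haar densities: take any Haar density $\mu^G$ on $G$ and any Haar density $\mu^K$ along the proper family $K$, and combine them into a Haar density $\mu^{\tilde G} = \mu^G\otimes\mu^K$ on $\tilde G = G\ltimes K$. The key feature is that source fibers of $\tilde G$ factor as $\tilde s^{-1}(\tilde x) \cong s^{-1}(\phi(\tilde x))\times K(-,\tilde x)$ with compact second factor, so cotangent averaging on $\tilde G^{[3]}$ with respect to $\mu^{\tilde G}$ Fubini-factors through cotangent averaging on $G^{[3]}$ with respect to $\mu^G$. A fibered extension of Proposition \ref{prop:averaging-properties}(b) then yields that the averaged metrics $\underline{\tilde\eta^{[3]}}$ and $\underline{\eta^{[3]}}$ still make $\phi^{[3]}$ a Riemannian submersion; being invariant under the free proper right actions by $\tilde G$ and $G$, they descend to 2-metrics on $\tilde G^{(2)}$ and $G^{(2)}$ making $\phi^{(2)}$ Riemannian, as required. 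The principal technical obstacle is precisely this final compatibility between averaging and push-forward — it reduces to the single-groupoid case of Proposition \ref{prop:averaging-properties}(b) only after one has carefully arranged Haar densities, connections, and dual metrics to all respect the semi-direct product structure; this is where both hypotheses (split fibration and proper total groupoid) enter essentially.
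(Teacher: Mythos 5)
Your overall strategy is the paper's: run the gauge trick on $\tilde G\cong G\ltimes K$ and on $G$ in parallel, with all input data (transverse metrics, Haar densities, connections) chosen to respect the semi-direct product structure, and invoke a fibered version of Proposition \ref{prop:averaging-properties}(b) — this is exactly Proposition \ref{prop:fiberd-averaging} — to see that cotangent averaging preserves the Riemannian submersion property. However, there is a genuine gap in your first step. You take an arbitrary horizontal distribution complementary to $\ker\d\phi^{(1)}$, an arbitrary fibrewise metric on $\ker\d\phi^{(1)}$, and claim the resulting orthogonal sum $\tilde\eta$ is ``automatically $\tilde s$-transverse'' because $\tilde s$-fibers project onto $s$-fibers. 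This is false: the part of the conormal bundle to the $\tilde s$-fibers lying inside $\ker\d\phi^{(1)}$ is governed entirely by your arbitrary fibrewise metric, and nothing forces it to be constant along the $\tilde s$-fibers. The extreme case $G=\ast$ makes this plain: there $\ker\d\phi^{(1)}=T\tilde G$ and your recipe produces a completely arbitrary metric on $\tilde G=K$, which is certainly not source-transverse in general. Without $\tilde s$-transversality the fiber-product metric $\tilde\eta^{[3]}$ of your second step is not even defined, since it needs a pushforward metric on $\tilde M$. The paper avoids this by building $\tilde\eta$ as an iterated fiber-product metric: choose $\eta$ on $G$ transverse to $s$, a metric $\tilde\eta^M$ on $\tilde M$ transverse to $\phi^{(0)}$ (over the same metric on $M$), and a metric $\eta^K$ on $K$ transverse to $\tilde s|_K:K\to\tilde M$ inducing $\tilde\eta^M$; the fiber-product metric on $\tilde G=H\times_{\tilde M}K$ then makes both $\tilde s$ (factoring through $K\to\tilde M$) and $\phi^{(1)}$ (factoring through $H\to G$) Riemannian submersions simultaneously.

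A secondary inaccuracy: properness of $K$ (Lemma \ref{lemma:proper-semidirect-product}(b)) does not give compact source fibers $K(-,\tilde x)$ — that would be source-properness. What you actually need, and what properness of $G$ and $K$ does provide, is a \emph{normalized} Haar density on each (compactly supported on every source fiber, with total integral $1$); the fibered density $\mu^G\ltimes\mu^K$ is then normalized and Fubini applies. With the first step repaired as above, the remainder of your argument coincides with the paper's proof.
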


Equivalently, a semidirect product $G\ltimes K$ of proper groupoids admits a 2-metric such that the projection onto $G$ is a Riemannian submersion. 
We will prove the theorem by adapting the gauge trick from the previous section, now choosing the metric $\eta$ on $\tilde G$ already compatible with the fibration, and performing an averaging with respect to suitable fibered Haar densities and connections. 

%
%
%
%
%

\subsubsection{Fibered Haar densities}


In a semi-direct product groupoid $G\ltimes K$ the source-fiber over $x$ can be written as
$(G\ltimes K)(-,x)\cong G(-,\phi(x)) \times K(-,x)$,
and consequently, we have the following vector bundle isomorphisms involving the Lie algebroids of $G\ltimes K$, $G$, $K$ and $H$:
$$A_{G\ltimes K}\cong \phi^*A_G\oplus A_K\cong A_H\oplus A_K.$$
In fact, it is possible to define fibrations and semi-direct products at the infinitesimal level, and to show that the Lie functor preserves these structures (\cite[\S 4.4-5]{mkbook}), but we do not need to go further than the above vector bundle isomorphisms here.



Recall that a {\bf Haar density} in a Lie groupoid $G\toto M$ is a density $\mu$ on the vector bundle underlying its algebroid $A_G$, which by pullback, yields a smooth family of densities $\{\mu^x\}_{x\in M}$ on the tangent bundles of the source-fibers
$TG(-,x)\cong t^*A$.
The Haar density is {\bf normalized} if ${\rm supp}(\mu^x)$ is compact for all $x\in M$ and
$$\int_{G(-,x)}\mu^x(g)=1.$$
Every proper groupoid admits a normalized Haar density, see \cite[A.2]{dhf} and references therein.
%


Let $\mu_G$ and $\mu_K$ be Haar densities in $G$ and $K$ respectively. Recalling that $A_{G\ltimes K}$ is isomorphic, as a vector bundle, to $\phi^*A_G\oplus A_K$, we can endow $G\ltimes K$ with the {\bf fibered density} $\mu_G\ltimes\mu_K$, defined as the pointwise product between the pullback density of $\mu_G$ and $\mu_K$.

When working with semi-direct products between proper groupoids, the total groupoid is also proper (cf.~Lemma \ref{lemma:proper-semidirect-product}), hence it admits a normalized Haar density. However, in order to apply Fubini's Theorem we do need a fibered density, so the following lemma becomes important:

\begin{lemma}
If $\mu_G$ and $\mu_K$ are normalized, then so does the fibered density $\mu_{\tilde G}=\mu_G\ltimes\mu_K$. 
\end{lemma}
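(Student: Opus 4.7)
The plan is to reduce the statement to a direct application of Fubini's theorem after identifying the source-fibers of $G \ltimes K$ as products. By the preceding discussion, we may identify $\tilde G$ with the semidirect product $G \ltimes K$, and for each $\tilde x \in \tilde M$ we have the canonical diffeomorphism
$$(G \ltimes K)(-, \tilde x) \;\cong\; G(-, \phi(\tilde x)) \times K(-, \tilde x),\qquad (g,k) \mapsto (g,k),$$
which is precisely the source-fiber version of the Lie-algebroid splitting $A_{G\ltimes K} \cong \phi^* A_G \oplus A_K$ that was used to define $\mu_{\tilde G}$.

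First I would check that under this identification $\mu_{\tilde G}^{\tilde x}$ is literally the product density of $\mu_G^{\phi(\tilde x)}$ and $\mu_K^{\tilde x}$: this is immediate from the definition of the fibered density as the pointwise product along the splitting $A_{G\ltimes K}\cong \phi^*A_G\oplus A_K$, translated to the source-fibers via the right-invariant extension $TG(-,x)\cong t^*A$. In particular the support satisfies
$$\mathrm{supp}\bigl(\mu_{\tilde G}^{\tilde x}\bigr) \;=\; \mathrm{supp}\bigl(\mu_G^{\phi(\tilde x)}\bigr) \times \mathrm{supp}\bigl(\mu_K^{\tilde x}\bigr),$$
which is compact, being a product of two compact sets.

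Next I would apply Fubini on the product manifold $G(-,\phi(\tilde x)) \times K(-, \tilde x)$, both of whose factor measures are compactly supported, to get
$$\int_{(G\ltimes K)(-,\tilde x)} \mu_{\tilde G}^{\tilde x} \;=\; \left(\int_{G(-,\phi(\tilde x))} \mu_G^{\phi(\tilde x)}\right)\cdot\left(\int_{K(-,\tilde x)} \mu_K^{\tilde x}\right) \;=\; 1 \cdot 1 \;=\; 1.$$

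There is no real obstacle here — the only thing worth being careful about is that the identification of source-fibers in the semidirect product is compatible with the pullback density construction, so that the fibered density really does correspond to the product density on the product manifold. Once this is unwound, normalization and compact support transfer from $\mu_G$ and $\mu_K$ to $\mu_{\tilde G}$ automatically.
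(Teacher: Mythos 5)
Your proposal is correct and follows essentially the same route as the paper: identify the source-fiber of the semidirect product with the product $G(-,x)\times K(-,\tilde x)$, observe that the fibered density becomes the product density so its support is the (compact) product of supports, and conclude by Fubini that the total integral is $1\cdot 1=1$. Your extra care in checking that the fiber identification is compatible with the pullback density construction is a welcome but minor elaboration of the same argument.
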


\begin{proof}
If $\tilde x\in \tilde M$, then the support of $(\mu_{\tilde G})^{\tilde x}$ on the $s$-fiber $\tilde G(-,\tilde x)\cong G(-,x)\times K(-,\tilde x)$ identifies with the product of the supports of $\mu_G^x$ and $\mu_K^{\tilde x}$, so it is compact. Moreover, by Fubini's Theorem, we have
\begin{align*}
\int_{\tilde G(-,\tilde x)} \mu_{\tilde G}(g,h) &=
\int_{G(-,x)\times K(-,\tilde x)} \mu_G(g)\times\mu_H(h)\\
&=\int_{G(-,x)}\left[ \int_{K(-,\tilde x)} \mu_G(g)\right] \mu_H(h)=1.
\end{align*}
\end{proof}

\subsubsection{Fibered connections}

Recall that a connection on $G\toto M$ is a linear unital cleavage for the fibration $TG\to G$ (cf. Example \ref{ex:fibrations:cleavages} (d)), or in other words, a vector bundle map $\sigma:s^*TM\to TG$  over $G$ such that $\d s\cdot\sigma=\id$ and $\sigma|_{TM}=\d u$. Connections are important for they allow us to extend a given action $G\action E$ to a quasi-action over the tangent bundle $TE$, and ultimately to average metrics. 

\begin{lemma}
Given $G\ltimes K\xto\phi G$ a split fibration, and given $\sigma^G$ and $\sigma^K$ connections on $G$ and $K$ respectively, 
the formula 
$$\tilde \sigma_{(g,k)}(v)=(\sigma^G_g(\d\phi(v)),\sigma^K_k(v))$$
defines a connection for the semi-direct product $\tilde G$.
\end{lemma}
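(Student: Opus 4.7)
The plan is to unpack the definition of a connection---a linear unital cleavage for $T\tilde G \to \tilde G$, i.e., a vector bundle map $\tilde\sigma: \tilde s^*T\tilde M \to T\tilde G$ over $\tilde G$ with $\d\tilde s \circ \tilde\sigma = \id$ and $\tilde\sigma|_{T\tilde M} = \d\tilde u$---and verify each clause by a direct computation from the given formula.

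First I would check that $\tilde\sigma_{(g,k)}(v)$ actually lands in $T_{(g,k)}\tilde G$, and not merely in the ambient $T_g G \oplus T_k K$. Writing $\tilde G = G \times_M K$ with the fiber product formed from $s: G \to M$ and the parametrization $q: K \to M$ of the family, the admissibility condition on a pair $(w,u)$ is $\d s(w) = \d q(u)$. The $G$-component of $\tilde\sigma_{(g,k)}(v)$ gives $\d s(\sigma^G_g(\d\phi(v))) = \d\phi(v)$ by the cleavage axiom for $\sigma^G$; on the $K$-side, $q$ factors through $s_K: K \to \tilde M$, so $\d q(\sigma^K_k(v)) = \d q(\d s_K(\sigma^K_k(v))) = \d q(v)$ by the cleavage axiom for $\sigma^K$. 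Since $\phi^{(0)}$ coincides with $q$ on $\tilde M$, both quantities equal $\d\phi(v)$, so the tangent vector is admissible.

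Next I would verify the two remaining axioms. For the source identity, $\tilde s(g,k) = s_K(k)$, so $\d\tilde s$ projects to the $K$-component followed by $\d s_K$; applied to $\tilde\sigma_{(g,k)}(v)$ this yields $\d s_K(\sigma^K_k(v)) = v$. For unitality, at $\tilde u(\tilde x) = (u_G(q(\tilde x)), u_K(\tilde x))$ the formula gives $(\d u_G(\d\phi(v)), \d u_K(v))$, which equals $\d\tilde u(v) = (\d u_G \circ \d q(v), \d u_K(v))$ by the unit axioms for $\sigma^G$ and $\sigma^K$ together with the identification $\d\phi = \d q$ on $\tilde M$. Linearity in $v$ and smoothness of the resulting section are inherited directly from $\sigma^G$ and $\sigma^K$.

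The only real obstacle is bookkeeping---keeping straight the interplay between $s$, $s_K$, $q$, and $\phi^{(0)}$ through which the fiber product $G \times_M K$ is formed; everything else is a routine unpacking of the connection axioms.
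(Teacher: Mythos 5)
Your argument is correct and follows the same route as the paper: you identify $T_{(g,k)}\tilde G$ as the subspace of $T_gG\oplus T_kK$ cut out by the fiber-product condition (the paper packages this as a short exact sequence of vector bundles over $\tilde G$), verify membership via $\d s\cdot\sigma^G_g(\d\phi(v))=\d\phi(v)=\d q\cdot\sigma^K_k(v)$, and then check $\d\tilde s\cdot\tilde\sigma=\id$ and $\tilde\sigma|_{T\tilde M}=\d\tilde u$ directly. The only difference is that you spell out the last two axioms, which the paper leaves to the reader.
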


\begin{proof}
The following is a good fiber product of manifolds, in the sense of \cite[2.2]{dh},
$$\xymatrix{\ar@{}[dr]|{\rm fp}
\tilde G \ar[r]^\pi \ar[d]_\phi & K \ar[d]^{\phi s} \\ G \ar[r]_s & M,
}$$
and therefore it induces a fiber product between their tangent bundles, that can be displayed as a short exact sequence of vector bundles over $\tilde G$:
$$0\to T\tilde G \to \phi^*TG\oplus \pi^* TK \to (\phi s)^*TM\to 0.$$
Hence we can identify $T\tilde G$ with the kernel of the map $(v,w)\mapsto \d s(v) - \d\phi\cdot \d s(w)$. Since
$\d s\cdot\sigma^G_g(\d\phi(v))=\d\phi(v)=\d\phi \cdot\d s\cdot\sigma^K_k(v)$, the above formula does define a vector bundle map $\tilde\sigma:\tilde s^*T\tilde M\to T\tilde G$.
It is easy to check directly from the formula that $\d\tilde s\cdot\tilde\sigma=\id$ and that $\tilde\sigma|_{T\tilde M}=\d u$.
\end{proof}

When working with semi-direct products, we shall always use a connection constructed as in this lemma, which we call a {\bf fibered connection}.

\subsubsection{Fibered averaging}
Given $\phi:\tilde G=G\ltimes K\to G$ a split fibration between proper groupoids, we fix a fibered normalized Haar density $\tilde\mu$ and a fibered connection $\tilde\sigma$, built out of data from $G$ and $K$, as in the previous discussion. The averaged metrics produced with them enjoy the following fundamental property:

\begin{proposition}\label{prop:fiberd-averaging}
Let $p:(E,\eta^E)\to (B,\eta^B)$ be a Riemannian submersion, and assume that $\tilde G\action E$ and $G\action B$ are groupoid actions compatible with the projection $p:E\to B$, in the sense that  
$p((g,k)e)=g(p(e))$ for all $(g,k)\in \tilde{G}=G\ltimes K$.
$$\xymatrix{
\tilde G \ar[d]_\phi \ar@{}|{\action}[r]& E \ar[d]^p\\
G \ar@{}|{\action}[r] & B}$$
Denote by \smash{$\underline\eta^E$} and \smash{$\underline\eta^B$} the cotangent averages of $\eta^E$ and $\eta^B$ with respect to $\tilde G$ and $G$, with averaging data as above. Then \smash{$p:(E,\underline\eta^E)\to(B,\underline\eta^B)$} is also Riemannian.
\end{proposition}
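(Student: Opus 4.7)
The plan is to verify pointwise, at each $e \in E$ with $b := p(e)$ and moment $\tilde x \in \tilde M$ (so that $x := \phi(\tilde x)$ is the moment of $b$), that the codifferential $(\d p)^*_e : T^*_b B \to T^*_e E$ is an isometry onto its image with respect to the dual averaged metrics; by definition this is exactly what it means for $p : (E, \underline{\eta^E}) \to (B, \underline{\eta^B})$ to be Riemannian. So for arbitrary $\alpha, \beta \in T^*_b B$, the goal is to show $\underline{\eta^E}^*_e((\d p)^*\alpha, (\d p)^*\beta) = \underline{\eta^B}^*_b(\alpha, \beta)$.

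The first step is to split the average over $\tilde G$ into an iterated integral via Fubini. Using the source-fiber identification $\tilde G(-, \tilde x) \cong G(-, x) \times K(-, \tilde x)$ together with the factorization $\tilde\mu = \mu_G \ltimes \mu_K$ of the fibered normalized Haar density, this yields
\[
\underline{\eta^E}^*_e((\d p)^*\alpha, (\d p)^*\beta) = \int_{G(-,x)}\!\!\int_{K(-,\tilde x)} (\eta^E)^*_{(g,k)e}\bigl((g,k){\cdot}(\d p)^*\alpha,\, (g,k){\cdot}(\d p)^*\beta\bigr)\,\mu_K\, \mu_G.
\]

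The crucial second step is to identify this integrand. The equivariance hypothesis $p \circ \theta_{(g,k)} = \theta_g \circ p$, combined with the fact that the quasi-actions on $T^*E$ and $T^*B$ are induced by the \emph{fibered} connection $\tilde\sigma = (\sigma^G, \sigma^K)$ and by $\sigma^G$ respectively, should yield the pointwise identity $(g,k) \cdot (\d p)^*\alpha = (\d p)^*(g\alpha)$. This compatibility is the place where choosing $\tilde\sigma$ to be a fibered connection matters: by the defining formula, $\d p$ carries $\tilde\sigma_{(g,k)}$ to $\sigma^G_g$, and this is what transports the quasi-action faithfully through the codifferential. Once this identification is in hand, $p$-transversality of $\eta^E$ converts the integrand into $(\eta^B)^*_{gb}(g\alpha, g\beta)$, which no longer depends on $k$. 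The normalization $\int_{K(-, \tilde x)} \mu_K = 1$ then collapses the inner integral to a factor of $1$, and what remains is precisely the definition of $\underline{\eta^B}^*_b(\alpha, \beta)$.

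The main obstacle I anticipate is the compatibility of quasi-actions under $\d p$ just mentioned: the quasi-action of $(g,k) \in \tilde G$ on $T^*E$ and the quasi-action of $g \in G$ on $T^*B$ are \emph{a priori} independent averaging data, and their alignment relies essentially on both $\tilde\mu$ and $\tilde\sigma$ being built from the semidirect decomposition $\tilde G = G \ltimes K$. Once this diagram of quasi-actions is checked to commute with $(\d p)^*$, the Fubini splitting and normalization make the remainder of the argument automatic.
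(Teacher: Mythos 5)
Your proposal is correct and follows exactly the route the paper intends: the paper's own proof is only the one-line remark ``unravelling the definitions and applying Fubini's Theorem,'' and your argument supplies precisely those details --- the Fubini splitting over $\tilde G(-,\tilde x)\cong G(-,x)\times K(-,\tilde x)$ with the normalized fibered density, the identity $(g,k)\cdot(\d p)^*\alpha=(\d p)^*(g\alpha)$ forced by the fibered connection, and the collapse of the inner integral by $p$-transversality and normalization of $\mu_K$. You have correctly isolated the one genuinely nontrivial point (the intertwining of the cotangent quasi-actions through $(\d p)^*$, which holds because $\d\phi\circ\tilde\sigma_{(g,k)}=\sigma^G_g\circ\d\phi^{(0)}$ by the defining formula of the fibered connection), so nothing is missing.
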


\begin{proof}
We work at the level of cotangent bundles. The map $p:(E,\eta^E)\to (B,\eta^B)$ is a Riemannian submersion  if and only if $(\d_e p)^*:(T^*_{p(e)} B,\eta^B)\to (T^*_eE,\eta^E)$ is an isometric embedding for each $e\in E$. After a choice of fibered connection, the fact that the groupoid actions commute with the projections yields for any $(g,k)\in \tilde{G}=G\ltimes K$ and $\alpha,\beta\in T_{p(e)}^*B$:
\begin{align*}
\mu^E((g,k)\cdot (\d_e p)^*\al,(g,k)\cdot (\d_e p)^*\be)&=\mu^E((\d_{(g,k)e}p)^*(g\cdot \al),(\d_{(g,k)e}p)^*(g\cdot \be))\\
&=\mu^B(g\cdot \al,g\cdot \be).
\end{align*}
Integrating relative to a fibered normalized Haar density $\tilde\mu$, and using Fubini, we conclude that the cotangent average metrics satisfy:
\begin{align*}
\underline{\mu}^E((\d_e p)^*\al, (\d_e p)^*\be)=\underline{\mu}^B(\al,\be).
\end{align*}
In other words, $(\d_e p)^*:(T^*_{p(e)} B,\underline{\eta}^B)\to (T^*_eE,\underline{\eta}^E)$ is an isometric embedding for every $e\in E$, so \smash{$p:(E,\underline\eta^E)\to(B,\underline\eta^B)$} is a Riemannian submersion.
\end{proof}

We are finally ready to prove our first main theorem:


\begin{proof}[Proof of Theorem \ref{thm:Riemannian:submersions}]
We will construct the 2-metrics $\tilde\eta^{(2)}$ on $\tilde G^{(2)}$ and $\eta^{(2)}$ on $G^{(2)}$ by performing the gauge trick \ref{recipe} simultaneously on $\tilde G$ and $G$.
\medskip

\emph{Step 1:} Construction of initial metrics $\tilde\eta$ on $\tilde G$ and $\eta$ on $G$ which are $s$-transverse and compatible with the fibration.
\smallskip

We choose first a metric $\eta$ on $G$ which is $s$-transverse. To construct the metric $\tilde\eta$ on $\tilde G$, we
note that we have the following fiber products of manifolds
$$\xymatrix@R=10pt{\ar@{}[dr]|{\rm fp}\ar@{}[d];[ddr]|{\rm fp}
\tilde G \ar[r] \ar[d] & K \ar[d] \\ H \ar[r] \ar[d] & \tilde M \ar[d] \\ G \ar[r] & M.
}$$
Now chose a metric $\tilde\eta^M$ on $\tilde M$ which is $\phi^{(0)}$-fibered. Then construct a metric $\eta^K$ on $K$ that is transverse to $s:K\to \tilde M$ and induces the previous metric $\tilde\eta^M$ on the base. After that, compute the fiber product metrics first on $H=G\times_M\tilde M$, and then on $\tilde G=H\times_{\tilde M}K$. The resulting metric $\tilde\eta$ will be both $s$-transverse and compatible with the fibration.

\medskip

\emph{Step 2:} Construction of metrics on the 3-fold products $\tilde G^{[3]}$ and $G^{[3]}$.
\smallskip

We can perform this step without any modification: we take the iterated fiber product metrics $\tilde\eta^{[3]}$ and $\eta^{[3]}$ on $\tilde G^{[3]}$ and $G^{[3]}$. These are 2-metrics on the Lie groupoids $\tilde G^{[2]}\toto \tilde G$ and $G^{[2]}\toto G$ respectively, and they make the canonical projection $\tilde G^{[2]}\to G^{[2]}$ a Riemannian submersion of Lie groupoids. To see this, note that by construction the map $s:\tilde G\to\tilde M$ preserves the horizontal distributions, or in other words, the horizontal lift yield a commutative square:
$$\xymatrix@R=15pt{
\ar[r]^{\d s} \ar[d];[dr]^{\d s} \ar[d];[] \ar[dr];[r]
T_{\tilde g}\tilde G & T_{\tilde x}\tilde M \\ T_gG & T_xM.
}$$
In particular $\ker\d_gs$ lifts to $\ker\d_{\tilde g}s$, from where the horizontal lift $T_{a}G^{[3]}\to T_{\tilde a}\tilde G^{[3]}$ preserves the canonical orthogonal decomposition of the fiber product metrics, and is therefore an isometry.

\medskip

\emph{Step 3:} Averaging of the metrics on $\tilde G^{[3]}$ and $G^{[3]}$.
\smallskip

This last step consists in averaging the two metrics on $\tilde G^{[3]}$ and $G^{[3]}$ with respect to the right actions of $\tilde G$ and $G$, respectively. If one uses for $\tilde G$ a fiber averaging data, by Proposition \ref{prop:fiberd-averaging}, we conclude that the resulting metrics descend to 2-metrics $\tilde\eta^{(2)}$ and $\eta^{(2)}$ on the original groupoids $\tilde G$ and $G$ and make the following diagram a commutative diagram of Riemannian submersions:
\[
\xymatrix@R=15pt{
\tilde G^{[3]}\ar[r]\ar[d] &\tilde  G^{(2)}\ar[d] \\
G^{[3]}\ar[r] &  G^{(2)} 
}
\]
Hence,  $\tilde\eta^{(2)}$ and $\eta^{(2)}$ fulfill all the desired properties.
\end{proof}


\section{Linearization of fibrations}
\label{sec:linear:fibrations}


In this section we show our main theorem, which asserts that any fibration between proper groupoids can be linearized. We achieve this by applying the results in \cite{dhf} on linearization of groupoid by exponential maps, combined with our construction of Riemannian metrics on split fibrations (Theorem \ref{thm:Riemannian:submersions}), and relating any fibration with a split one by means of Lemma \ref{lemma:associated-fibration}.  
This is the cornerstone from which we will later derive as applications our results on rigidity of Lie groupoids, metrics over stacks and the stacky Ehresmann's Theorem.

\subsection{Linearization of groupoids}

Let $G\toto M$ be a Lie groupoid and let $S\subset M$ be a {\bf saturated} submanifold, so the restriction $G_S\toto S$ is a Lie subgroupoid. The {\bf linear local model} of $G$ around $S$ is the groupoid-theoretic normal bundle $\nu(G_S)\toto \nu(S)$, whose objects and arrows are given by $\nu(S)=T_SM/TS$ and $\nu(G_S)=T_{G_S}G/TG_S$, and structure maps are induced by differentiating those of $G$. A {\bf linearization} of $G$ around $S$ is a pair of open subgroupoids $G_S\subset U\subset G$ and $G_S\subset V\subset \nu(G_S)$, together with a groupoid isomorphism  $\alpha: U \xto\cong  V$.

Recall that a subgroupoid $U\subset G$ is {\bf full} if it contains every arrow of $G$ between its objects.
We say that:
\begin{itemize}
\item $G$ is {\bf weakly linearizable} around $S$ if there exists some linearization;
\item $G$ is {\bf linearizable} around $S$ if one can take $U,V$ to be full subgroupoids;
\item $G$ is {\bf strictly linearizable} if, in addition, $\U0$ and $\V0$ can be taken to be saturated.
\end{itemize}
These different notions may agree if $G$ is ``nice enough'': it is proved in \cite{dhf} that any open subgroupoid $U$ can be shrinked to a full one when $G$ is proper, and to a full saturated one when $G$ is source-proper. The various results on linearization of groupoids can be derived from the following fundamental theorem:

\begin{theorem}[\cite{dhf}]
\label{thm:linearization:grpd}
Given $(G\toto M,\eta^{(2)})$ a Hausdorff Riemannian groupoid and given $S\subset M$ a saturated embedded submanifold, the exponential maps arising from $\eta$ define a (weak) linearization of $G$ around $S$.
\end{theorem}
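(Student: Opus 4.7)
The plan is to exploit the defining property of Riemannian submersions---that horizontal geodesics project to geodesics---so that exponential maps intertwine the submersion on normal directions to fibers. Concretely, for any Riemannian submersion $p:(E,\eta^E)\to(B,\eta^B)$ and any embedded submanifold $N\subset B$ with $\tilde N=p^{-1}(N)$, every normal vector $\xi\in\nu(\tilde N)$ is horizontal, $\d p$ descends to a surjection $\overline{\d p}:\nu(\tilde N)\to\nu(N)$, and one has $p\circ\exp_{\eta^E}=\exp_{\eta^B}\circ\overline{\d p}$ wherever both sides are defined. By the very definition of a 2-metric, all face maps $\pi_1,m,\pi_2:\G2\to G$ as well as $s,t:G\to M$ are Riemannian submersions, so this intertwining principle will apply to every structure map of $G$.

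I would then proceed in four steps. First, the geometric preliminaries: since $S\subset M$ is saturated and embedded, $G_S\subset G$ and the composable-pairs space $G_S^{(2)}\subset\G2$ are embedded submanifolds, the normal bundle $\nu(G_S)\toto\nu(S)$ inherits a Lie groupoid structure from $G$ by linearization, and each exponential map $\exp_{\eta^{(k)}}$ restricts to a diffeomorphism from some open neighborhood $U^{(k)}\subset\nu(G_S^{(k)})$ of the zero section onto an open neighborhood of $G_S^{(k)}$ in $\G{k}$, for $k=0,1,2$.

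Second, applying the intertwining principle to $s,t:G\to M$ and to $\pi_1,m,\pi_2:\G2\to G$, I would obtain identities of the form $s\circ\exp_{\eta^{(1)}}=\exp_{\eta^{(0)}}\circ\overline{\d s}$ and $m\circ\exp_{\eta^{(2)}}=\exp_{\eta^{(1)}}\circ\overline{\d m}$, and similarly for $t$ and the remaining faces. Third, for the unit map, I would use the fact---a consequence of the $S_3$-symmetry of $\eta^{(2)}$---that $u(M)\subset G$ is totally geodesic with respect to $\eta^{(1)}$; hence $\exp_{\eta^{(1)}}$ restricted to the subbundle $TM\subset T_MG$ stays inside $M$ and, combined with source-compatibility, agrees there with $\exp_{\eta^{(0)}}$. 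Compatibility with inversion follows similarly from $i:G\to G$ being an isometry of $\eta^{(1)}$. Together, these identities show that $(\exp_{\eta^{(0)}},\exp_{\eta^{(1)}})$ is a groupoid homomorphism on its domain.

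The last step, which I expect to be the main technical obstacle, is to shrink the neighborhoods $U^{(1)}$ and $V^{(1)}$ into honest open subgroupoids. The naive domains on which the intertwining identities hold need not be closed under multiplication and inversion, and no canonical choice presents itself: one must shrink iteratively so that composable arrows in the shrunk $V^{(1)}$ lift to composable pairs whose normal preimages lie in $U^{(2)}$, and conversely. Using continuity of the structure maps and the fact that the exponential is a local diffeomorphism, this can be arranged to yield the \emph{weak} linearization asserted in the theorem. Upgrading to \emph{fullness} or \emph{invariance} of the subgroupoids requires further properness hypotheses, addressed separately.
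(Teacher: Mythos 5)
Your steps 1--3 reproduce the paper's argument (inherited from \cite{dhf} and outlined in Remark \ref{rmk:admissible}): the engine is precisely your intertwining principle applied to the face maps $\pi_1,m,\pi_2:\G2\to G$ and $s,t:G\to M$, which are Riemannian submersions by the very definition of a 2-metric, with units and inversion handled through the $S_3$-invariance. That part is sound.

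The gap is in your step 4. You assert that ``no canonical choice presents itself'' and propose an unspecified iterative shrinking; as described this is not justified --- shrinking $V^{(1)}$ to force closure under multiplication changes the set of composable pairs you must control at the next stage, the process need not stabilize, and a decreasing intersection of opens need not remain open. The point you are missing, and it is supplied by the very identities you established in steps 2--3, is that a canonical choice \emph{does} exist: fixing an admissible open $S\subset U^{(0)}\subset\nu(S)$, the set
$$U^{(1)}:=\d s^{-1}(U^{(0)})\cap \d t^{-1}(U^{(0)})\cap\E_G\cap\nu(G_S),$$
where $\E_G$ denotes the domain of the normal exponential of $\eta^{(1)}$, is already an open subgroupoid. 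Closure under the source/target conditions is immediate, while closure of $\E_G$ under multiplication and inversion follows from $m\circ\exp_{\eta^{(2)}}=\exp_{\eta^{(1)}}\circ\overline{\d m}$ and the isometry property of $i$, together with the fact (this is \cite[Prop.~5.9]{dhf}, the one genuinely technical input) that a composable pair of arrows in this set lies in the domain of $\exp_{\eta^{(2)}}$, so that the identity for $m$ actually applies. Replacing your iteration by this explicit construction closes the argument; no further shrinking is needed for the \emph{weak} linearization claimed.
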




This follows by extending the usual method to build tubular neighborhoods on manifolds using metrics. Given $(M,\eta)$ a Riemannian manifold and $S\subset M$ an embedded submanifold, we say that an open $S\subset U\subset TS^\bot$ is {\bf admissible} if $\exp|_U$ is an open embedding. 
The existence of admissible opens for manifolds is a consequence of the following well-known result: {\it If $S\subset M$ is an embedded submanifold and $f:M\to N$ is such that $f|_S$ is an embedding and $\d_xf$ is invertible for all $x\in S$, then $f|_U$ is an open embedding on some open $S\subset U$.} The key point here is to show global injectivity.

In order to extend this to groupoids, given a Riemannian groupoid $G\toto M$ and an embedded saturated submanifold $S\subset M$, we will say that an open subgroupoid $G_S\subset U\subset \nu(G_S)$ is {\bf admissible} if (i) its objects $\U0$ form an admissible open, and (ii) the pairs of composable arrows $\U2=\U1\times_{\U0}\U1$ are within the domain of the exponential map of $\eta^{(2)}$. We then have:

\begin{lemma}\label{lemma:admissible}
If $G\toto M$ is a proper groupoid, $S\subset M$ is saturated embedded and $\phi:G\to H$ is a Lie groupoid map such that $\phi|_{G_S}$ is an embedding and $\d_g\phi$ is invertible for all $g\in G_S$ then $\phi|_{G_U}$ is an open embedding for some open $S\subset U$.
\end{lemma}

\begin{proof}
By the manifold version of the lemma applied at the level of the arrows, we know of the existence of an open $G_S\subset V\subset G$ over which $\phi^{(1)}$ is an open embedding. By \cite[Lemma 5.3]{dhf} there exists an open $S\subset U\subset M$ such that the restriction groupoid $G_U$ is included in $V$ (even though the statement of that lemma requires $V$ to be a subgroupoid, this is not used in the proof). The result follows.
\end{proof}

The following simple example shows that the properness assumption in the previous lemma is crucial.

\begin{example}\label{ex:admissible}\
Let $G$ be the constant group bundle over $\R$ with fiber $(\R,+)$, and $K\subset G$ the subgroupoid with fiber $K_t=(1/t)\Z$. The quotient $G/K$ is a group bundle with fiber $\Ss^1$ if $t\neq0$, and fiber $\R$ at $t=0$. The projection $G\to G/K$ is an embedding at $S=G_0$, but not in any open $S\subset U$. 
\end{example}

This way proper groupoids admit 2-metrics and full admissible open neighborhoods around saturated embedded submanifolds, so they can be linearized by exponential maps. More generally, the existence of admissible open subgroupoids holds for arbitrary {\em Hausdorff} Riemannian groupoids (see Propostion 5.9 and Theorem 5.11 in \cite{dhf}, where the assumption of Hausdorff is missing).





\subsection{Linearization of fibrations}


Let $\phi:\tilde G\to G$ be a submersion and $S\subset M$ a saturated submanifold. Then $\tilde S=\phi^{-1}(S)\subset \tilde M$ is a saturated submanifold as well, and we have an induced map $\overline{\d\phi}:\nu(\tilde G_{\tilde S})\to \nu(G_{S})$ between the corresponding linear local models:
$$
\xymatrix@R=20pt{ \nu(\tilde G_{\tilde S})\ar@<0.20pc>[r] \ar@<-0.20pc>[r] \ar[d]_{\overline{\d\phi^{(1)}}} &
\nu(\tilde S))\ar[d]^{\overline{\d\phi^{(0)}}}
\\
\nu(G_S)\ar@<0.20pc>[r] \ar@<-0.20pc>[r] & \nu(S)}$$

\begin{definition}
\label{defn:linearozation:submersion}
We say that $\phi$ is {\bf linearizable around $S$} if there is a linearization $\tilde \alpha$ of $\tilde G$ around $\tilde S=\phi^{-1}(S)$ and a linearization $\alpha$ of $G$ around $S$ forming a commutative square:
$$\xymatrix{
\nu(\tilde G_{\tilde S}) \supset
\tilde U \ar[d]_{\overline{\d\phi}} \ar[r]^--{\tilde\alpha}
& \tilde V \subset \tilde G \ar[d]^\phi \\
\nu(G_S) \supset U \ar[r]^--{\alpha} & V \subset G}$$
\end{definition}


As one could guess, the existence of compatible 2-metrics on $\tilde G$ and $G$ provide a linearization by exponential maps. 

\begin{proposition}
\label{prop:linearization:submersions}
Let $\phi:(\tilde G,\tilde \eta)\to (G,\eta)$ be a Riemannian submersion between Riemannian groupoids, let $S\subset M$ be embedded saturated, and let $\tilde S=\phi^{-1}(S)$. Then the exponential maps of $\tilde \eta$ and $\eta$ define a linearization of $\phi$ around $S$.
\end{proposition}

\begin{proof}
We know that the exponential maps restricted to an admissible open subgroupoid defines a linearization. Hence, it is enough to construct admissible open subgroupoids $\tilde U$ and $U$ such that $\phi(\tilde U)\subset U$, for then we obtain a linearization diagram:
$$\xymatrix@C=50pt{
\nu(\tilde G_{\tilde S}) \supset
\tilde U \ar[d]_{\overline{\d\phi}} \ar[r]^{\exp_{\tilde \eta}}
& \tilde V \subset \tilde G \ar[d]^\phi \\
\nu(G_S) \supset U \ar[r]^{\exp_\eta} & V \subset G}$$
We denote by $\E$ the domain of the exponential map. Let us fix $S\subset \U0\subset \nu(S)$ an admissible open, define $\tilde U^{(0)}=\E_{\tilde M}\cap\phi^{-1}(\U0)$, and now consider
\begin{align*}
\U1&=\d s^{-1}(\U0)\cap \d t^{-1}(\U0)\cap \E_G\cap \nu(G_S)\\
{{\tilde U}^{(1)}}&=\d s^{-1}(\tilde U^{(0)})\cap \d t^{-1}(\tilde U^{(0)})\cap \E_{\tilde G}\cap \nu({\tilde G}_{\tilde S})\cap \phi^{-1}(\U1)
\end{align*}
As in the proof of Theorem \ref{thm:linearization:grpd}, $U$ is an admissible open subgroupoid. Then $\tilde U$ is the intersection of an admissible open subgroupoid with the preimage of an open subgroupoid, and therefore, it is an admissible open subgroupoid too.
\end{proof}

%


According to Theorem \ref{thm:Riemannian:submersions} for a fibration $\phi:\tilde G\to G$ between proper Lie groupoids to be Riemannian it is enough to be split. We do not know if this condition is necessary. Nevertheless, the splitting condition is fulfilled in important examples, such as those we will use in the subsequent sections to prove rigidity of compact groupoids, and the Morita invariance of metrics. Moreover, one has the following stronger linearization result.

\begin{theorem}
\label{thm:linear:fib}
Any fibration $\phi:\tilde G\to G$ between proper Lie groupoids is linearizable around any saturated embedded submanifold $S\subset M$.
\end{theorem}

\begin{proof} We split the proof into several cases.
\medskip

\emph{Case 1:} $\phi$ admits a unital flat cleavage.
\smallskip

In this case $\phi$ is a split fibration so, by Theorem \ref{thm:Riemannian:submersions}, it can be made Riemannian. By  Proposition \ref{prop:linearization:submersions}, we can linearize $\phi$ using the exponential maps.
Note that, since $\tilde G$ and $G$ are proper, we can take the open neighborhoods to be full.

\medskip

\emph{Case 2:} $\phi$ admits a unital cleavage.
\smallskip

We use the canonical factorization $\phi=\phi'\iota$, explained in Section \ref{section:split-fibration}. Since \smash{$\iota:\tilde G\to \tilde G'$} is an equivalence and $\tilde G$ is proper, $\tilde G'$ is proper as well. Endow the split fibration $\phi':\tilde G'\to G$ with a Riemannian submersion structure. Identify $\tilde G $ with a subgroupoid of $\tilde G'$ via $\iota$, and consider the retraction $r:\tilde G'\to \tilde G$ given by the cleavage (see Lemma \ref{lemma:associated-fibration}). If we set $\tilde S'=\phi'^{-1}(S)$, by Proposition \ref{prop:linearization:submersions}, there are admissible open neighborhoods giving a linearization of $\phi'$ around $S$. Since the ambient groupoids are proper, we can take these admissible opens to be full. By restricting the VB-groupoid $\nu(\tilde G'_{\tilde S'})$ to $\tilde G_{\tilde S}$ we get a groupoid diagram as follows:
$$\xymatrix{
 & \nu(\tilde G'_{\tilde S'})|_{\tilde G_{\tilde S}} \supset {\tilde U'}
 \ar[rr]^{\exp_{\tilde\eta'}} 
 \ar[dl]_{\overline{\d r}}
 \ar[dd]_(0.35){\overline{\d\phi'}} & & \tilde V'\subset \tilde G'  \ar[dd]^(0.35){\phi'}
 \ar[dl]_{r} \\
 \nu(\tilde G_{\tilde S}) \ar[dr]^{\overline{\d\phi}} \ar@{-->}[rr]& &   \tilde G \ar[dr]^{\phi}& \\
 & \nu(G_S)\supset U \ar[rr]^{\exp_{\eta}} &  & V\subset G 
}$$

Here $\tilde U'=\tilde U''|_{\tilde G_{\tilde S}}$, where $\tilde U''\subset\nu(\tilde G'_{\tilde S'})$ is an admissible open groupoid, $\tilde V'=\exp_{\tilde\eta'}(\tilde U')$ is an embedded subgroupoid of $\tilde G'$ containing $\tilde G_{\tilde S}$,
$\exp_{\tilde\eta'}:\tilde U'\to \tilde V'$ is a groupoid isomorphism and $\overline \d r:\nu(\tilde G'_{\tilde S'})|_{\tilde G_{\tilde S}}\to\nu(\tilde G_{\tilde S})$ is an isomorphism. The square and the triangles commute.  In general, $\tilde V'$ is neither open nor included in $\tilde G$.

Consider the composition
\[ \alpha=r\circ\exp_{\tilde\eta'}\circ (\overline{\d r})^{-1}:\overline{\d r}(\tilde U')\to \tilde G. \] 
Note that $\alpha|_{\tilde G_{\tilde S}}$ is an open embedding and $\d_g\alpha$ is invertible for $g\in\tilde G_{\tilde S}$. Moreover, $\overline{\d r}(\tilde U')$ is a full subgroupoid of a proper groupoid, hence it is proper. Then, by Lemma \ref{lemma:admissible}, we conclude that $\alpha$ defines an open embedding of groupoids on some full neighborhood $\tilde U$ of $\tilde G_{\tilde S}$, yielding a linearization of $\phi$.

\medskip

\emph{Case 3:} $\phi$ is any fibration.

Recall that a cleavage for $\phi$ is a global section $\sigma$ for $\hat\phi:\tilde G\to \tilde M'=G\times_M\tilde M$, and that the cleavage is unital if it extends the section $\tilde u:\tilde M\to \tilde G$, where $\tilde M$ is seen as a submanifold of $\tilde M'$ via $\tilde x\mapsto (\id_{\phi(\tilde x)},\tilde x)$. Even when no such cleavage exists, we can still construct a {\em local cleavage}. By this we mean an horizontal lift for arrows $g$ that are closed to an identity, in the form of a section $\sigma:W\to \tilde G$ extending $\tilde u$, defined over a tubular neighborhood $\tilde M\subset W\subset \tilde M'$. One way to do this is by linearizing $\hat\phi$ around $\tilde M$. Then the section $\sigma$ gives rise to a Lie groupoid retraction $r:\tilde G'_W\to \tilde G$, with the same formulas as in Lemma \ref{lemma:associated-fibration}, now defined only over the restriction of $\tilde G'$ to $W$. The proof now follows as in the previous case, by eventually shrinking the opens $\tilde U',\tilde V'$ so as to insure $\tilde V'\subset \tilde G'_W$.
\end{proof}


The classical Ehresmann's Theorem states that proper submersions between manifolds are locally trivial. This can be seen as a combination of two facts: (i) submersions are linearizable, and (ii) any open around the fiber of a proper map contains a tube, i.e., a saturated open neighborhood of the fiber. 
In light of Theorem \ref{thm:linear:fib} and the tube principle \ref{prop:tube}, we have the following groupoid version:

\begin{corollary}[Groupoid Ehresmann]\label{thm:groupoid-ehresmann}
Let $\phi:\tilde G\to G$ be a proper fibration between proper Lie groupoids, and let $S\subset M$ be a saturated submanifold. Then there exist open full subgroupoids $G_S\subset U\subset \nu(G_S)$ and $G_S\subset V\subset G$, and linearization maps:
$$\xymatrix@R=20pt{
\nu(\tilde G_{\tilde S}) \supset(\overline{\d\phi})^{-1}(U)\ar[d]_{\overline{\d\phi}} \ar[r]^---{\tilde\alpha}
& \phi^{-1}(V) \subset \tilde G \ar[d]^\phi \\
\nu(G_S) \supset U \ar[r]^--{\alpha} & V\subset G}$$
\end{corollary}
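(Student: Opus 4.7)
The plan is to combine Corollary \ref{cor:linear:split:fib}, which gives that split fibrations between proper groupoids are linearizable by full open subgroupoids, with the tube principle of Proposition \ref{prop:tube}(ii), which applies because $\phi$ is a proper fibration. The rough idea is to start from an auxiliary linearization and then shrink the base neighborhood $V$ until $\phi^{-1}(V)$ is forced to lie inside the linearization domain; the analogous statement on the normal bundle side then comes for free from the commutativity of the linearization square.

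First, I would invoke Corollary \ref{cor:linear:split:fib} to produce full open subgroupoids $U^{*}\subset\nu(G_{S})$, $V^{*}\subset G$, $\tilde{U}^{*}\subset\nu(\tilde{G}_{\tilde{S}})$, $\tilde{V}^{*}\subset\tilde{G}$ together with exponential isomorphisms $\exp_{\eta}:U^{*}\to V^{*}$ and $\exp_{\tilde{\eta}}:\tilde{U}^{*}\to\tilde{V}^{*}$ satisfying $\phi\circ\exp_{\tilde{\eta}}=\exp_{\eta}\circ\overline{\d\phi}$, with $\phi(\tilde{V}^{*})\subset V^{*}$ and $\overline{\d\phi}(\tilde{U}^{*})\subset U^{*}$. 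Next, since $\phi$ is a proper fibration, I would apply Proposition \ref{prop:tube}(ii) to the open subgroupoid $\tilde{V}^{*}\subset\tilde{G}$, obtaining an open subgroupoid $G_{S}\subset W\subset G$ with $\phi^{-1}(W)\subset\tilde{V}^{*}$. Intersecting $W$ with $V^{*}$ and then shrinking the result (using properness of $G$ and the shrinking-to-full argument of \cite{dhf}) produces a full open subgroupoid $G_{S}\subset V\subset V^{*}$ still satisfying $\phi^{-1}(V)\subset\tilde{V}^{*}$.

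Finally, I would define $\tilde{V}:=\phi^{-1}(V)$, $U:=\exp_{\eta}^{-1}(V)\subset U^{*}$, and $\tilde{U}:=(\overline{\d\phi})^{-1}(U)\subset\tilde{U}^{*}$. The commutativity of the linearization square forces $\tilde{U}=\exp_{\tilde{\eta}}^{-1}(\tilde{V})$, so the exponential maps restrict to isomorphisms $U\to V$ and $\tilde{U}\to\tilde{V}$, producing the required diagram. Fullness of $V$ in $G$ directly implies fullness of $\tilde{V}=\phi^{-1}(V)$ in $\tilde{G}$; for fullness of $U$ in $\nu(G_{S})$, a short diagram chase suffices: any arrow $a$ of $\nu(G_{S})$ whose source and target lie in $U^{(0)}$ lies in $U^{*}$ by fullness of $U^{*}$, and then $\exp_{\eta}(a)\in V^{*}$ has source and target in $V^{(0)}$, so it lies in $V$ by fullness of $V$, whence $a\in U$. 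The argument for $\tilde{U}$ in $\nu(\tilde{G}_{\tilde{S}})$ is identical.

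The main (though fairly mild) obstacle I anticipate is precisely this last fullness bookkeeping, making sure fullness is preserved simultaneously on the ambient side (where one actually shrinks to a tube) and on the normal bundle side (where one merely takes preimages). The genuine geometric content is the tube principle of Proposition \ref{prop:tube}(ii), which is exactly what upgrades a weak linearization of a proper fibration to one compatible with preimages of saturated tubes.
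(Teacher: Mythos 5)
Your overall strategy is exactly the one the paper intends (it derives this corollary in one line from Corollary \ref{cor:linear:split:fib} together with the tube principle of Proposition \ref{prop:tube}), and most of your bookkeeping is correct: shrinking $W\cap V^{*}$ to a full open subgroupoid $V$ with $\phi^{-1}(V)\subset\tilde V^{*}$, the fullness chase for $U=\exp_\eta^{-1}(V)$, and the fullness of $\phi^{-1}(V)$. However, there is a genuine gap at the step ``the commutativity of the linearization square forces $\tilde U=\exp_{\tilde\eta}^{-1}(\tilde V)$''. Commutativity only gives $\exp_{\tilde\eta}^{-1}(\tilde V)=\tilde U^{*}\cap(\overline{\d\phi})^{-1}(U)$; to conclude that this equals $(\overline{\d\phi})^{-1}(U)$ --- which is what the statement requires, since the top-left corner of the diagram must be the \emph{entire} preimage $(\overline{\d\phi})^{-1}(U)$ --- you need the containment $(\overline{\d\phi})^{-1}(U)\subset\tilde U^{*}$, and nothing in your construction provides it. Your tube-principle step controls only the right-hand column of the square; the preimage $(\overline{\d\phi})^{-1}(U)$ may contain normal vectors far from the zero section, outside the admissible domain $\tilde U^{*}$, where $\exp_{\tilde\eta}$ is not defined or not injective. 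Already for a proper submersion of manifolds $\phi:\tilde M\to M$ and $S=\{x\}$ one has $(\overline{\d\phi^{(0)}})^{-1}(U)\cong\tilde S\times U$, and this lies in the admissible neighborhood of the zero section only after $U$ has been shrunk --- a condition not implied by $\phi^{-1}(V)\subset\tilde V^{*}$.

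The missing ingredient is a second application of the tube principle, this time to $\overline{\d\phi}:\nu(\tilde G_{\tilde S})\to\nu(G_S)$. This map is again a fibration, and it is proper: since $\phi^{(1)}$ is a submersion, $\nu(\tilde G_{\tilde S})$ is canonically the pullback of $\nu(G_S)$ along $\phi^{(1)}|_{\tilde G_{\tilde S}}$ with $\overline{\d\phi}$ a fiberwise linear isomorphism, so $\overline{\d\phi}$ is a base change of the proper map $\tilde G_{\tilde S}\to G_S$. Hence Proposition \ref{prop:tube}(ii) applies to $\overline{\d\phi}$ and the open subgroupoid $\tilde U^{*}$, yielding an open subgroupoid $G_S\subset U'\subset\nu(G_S)$ with $(\overline{\d\phi})^{-1}(U')\subset\tilde U^{*}$. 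Replacing your $V$ by a full open subgroupoid of $W\cap V^{*}\cap\exp_\eta(U'\cap U^{*})$ and then setting $U=\exp_\eta^{-1}(V)\subset U'$ guarantees $(\overline{\d\phi})^{-1}(U)\subset\tilde U^{*}$, after which your identification $\tilde U=\exp_{\tilde\eta}^{-1}(\tilde V)$ and the remaining fullness arguments go through verbatim.
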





\section{Rigidity of compact groupoids}
\label{section:rigidity}


The theory developed so far, comprising some basic facts on fibrations and metrics within the groupoid framework, is actually quite powerful. 
We illustrate this by deriving from it some results on the structural stability of Lie groupoids. These results, that generalize classic deformation theorems in the geometry of actions, fibrations and foliations, were obtained recently in \cite{cms} by different, less direct, methods.



Recall that we have defined a family of Lie groupoids parametrized by $M$ to be a fibration $(\tilde G\toto \tilde M)\to (M\toto M)$ over the unit groupoid. Given $x\in M$, we are interested in the behavior of the fiber $\tilde G_x\toto \tilde M_x$ when $x$ varies. 

\begin{definition}
Let $G\toto M$ be a Lie groupoid and $I$ a manifold with base point $0\in I$. A {\bf deformation} of $G$ parametrized by $I$ consists of a family of Lie groupoids
$$\phi:(\tilde G\toto \tilde M)\to (I\toto I)$$
such that $\phi^{(1)},\phi^{(0)}$ are locally trivial and the central fiber $\tilde G_0$ is isomorphic to $G$. The deformation is called {\bf proper} if $\phi$ is a proper map (cf.~Definition \ref{def:proper-map}), and it is called {\bf trivial} if it is equivalent to the product family $G\times I\to I$. 
\end{definition}



The most relevant case is when $I\subset \R$ is an open interval, in which case we speak of a {\bf 1-parameter deformation}. Of course, we can also consider {\bf $k$-parameter deformations}, where $I\subset\R^k$ is some open set, and in general we can allow $I$ to be any manifold. Note that proper deformations of $G$ only make sense when $G$ is compact. Also, in a proper deformation, the conditions that $\phi^{(1)}$ and $\phi^{(0)}$ be locally trivial become automatic.


In a deformation $\tilde G\to I$, the fact that $\phi^{(1)}$ and $\phi^{(0)}$ are assumed locally trivial, allows us to think that the manifolds $G$ and $M$ remain fixed while, when the parameter $\eps\in I$ varies, one deforms the structure maps $s_\eps,t_\eps,m_\eps,u_\eps,i_\eps$ of the groupoid $G$. More precisely, we obtain for each $\eps\in I$ a groupoid structure on the manifold $G=\tilde G_0$ by conjugating with some trivializations $\alpha^{(0)}:\tilde M\to M\times I$ and $\alpha^{(1)}:\tilde G\to G\times I$. For instance, the source $s_\eps$ will be given by
$$(s_\eps(g),\eps)=\alpha^{(0)}\circ\tilde s\circ(\alpha^{(1)})^{-1}(g,\eps) \qquad g\in G, \eps\in I.$$

\begin{examples}\label{deformations}\

\begin{enumerate}
\item (cf. \cite[Example 4.1.3]{dhf}) Let $G=]0,+\infty[\times\R$ and consider a family of groups $\phi:G\times \R\to\R$, $(x,y,\eps)\mapsto \eps$,
with multiplication: 
\[ (x_1,y_1)\star_\eps (x_2,y_2):=(x_1+x_2,y_1+(x_1)^\eps y_2). \]
This is a non-trivial 1-parameter deformation of the 2-dimensional abelian Lie group, since for $t\not=0$ the multiplication $\star_\eps$ is non-abelian.

\item Consider the family of group actions of $\R$ on $\mathbb{T}^2=\R^2/\Z^2$ given by:
\[ x\star_\eps (\theta_1,\theta_2)= (\theta_1+ x,\theta_2+\eps x). \]
This can be seen as a non-trivial 1-parameter deformation $\phi:G\times \R\to \R$, for clearly the topology of the orbits varies with $\eps$. 

\item Let $F_0$ be the foliation on $\Ss^1\times\R$ given by the projection on $\R$. The 1-forms $\epsilon\d r+\d t$ and $\d\epsilon$ define a foliation $F$ on $\Ss^1\times\R\times\R$ tangent to the fibers of the projection $(x, t,\epsilon)\mapsto\epsilon$. We can think of it as a one-parameter deformation of $F_0$, that is non-trivial: if $\epsilon\neq0$ the leaves of $F_\epsilon$ are diffeomorphic to $\R$. Writing $\R\to \Ss^1$, $x\mapsto e^{ix}$ for the universal cover, the monodromy groupoid $\mon(F)\toto \Ss^1\times\R\times\R$ identifies with a quotient of the groupoid arising from the submersion $\R\times\R\times\R\to\R\times\R$, $(x,t,\epsilon)\mapsto(t+\epsilon x,\epsilon)$. It is a non-trivial 1-parameter deformation of $\mon(F_0)\toto \Ss^1\times\R$.

\end{enumerate}
\end{examples}

Here we use our linearization results for fibrations to achieve a simple, geometric, proof that proper families of Lie groupoids are trivial. This rigidity result was obtained independently by M. Crainic, J. Mestre and I. Struchiner \cite{cms} by developing a deformation theory of Lie groupoids based in cohomological methods. 



\begin{theorem}
[{\cite[Thm 7.4]{cms}}]
\label{thm:rigidity}
Compact Lie groupoids are rigid: a proper deformation $\tilde G\to I$ of a compact Lie groupoid  $G\toto M$ is locally trivial.
\end{theorem}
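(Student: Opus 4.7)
The plan is to reduce the statement to the Groupoid Ehresmann Theorem (Corollary~\ref{thm:groupoid-ehresmann}) applied at the base point. First I would verify its hypotheses for $\phi:\tilde G\to (I\toto I)$. Since $\phi$ is a family of Lie groupoids, the unit map provides a canonical unital flat cleavage (Example~\ref{ex:fibrations:cleavages}b), so $\phi$ is a split fibration. The unit groupoid $(I\toto I)$ is proper because its anchor is the diagonal, which is a closed embedding and hence proper. Since $\phi$ is proper and its base is proper, the total groupoid $\tilde G$ is proper as well. Finally, $\{0\}\subset I$ is an embedded saturated submanifold, and its preimage $\tilde S=(\phi^{(0)})^{-1}(0)=M\subset\tilde M$ is saturated because $\tilde G$-orbits are contained in the fibers of $\phi^{(0)}$.

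Next I would compute the linear local model. Because $\phi^{(0)}$ and $\phi^{(1)}$ are locally trivial submersions (part of the definition of a deformation) with central fibers $M$ and $G$ respectively, there are canonical identifications $\nu(M)\cong M\times T_0I$ and $\nu(\tilde G_M)\cong G\times T_0I$, under which the induced groupoid structure on $\nu(\tilde G_M)\toto\nu(M)$ becomes the product groupoid $G\times T_0I\toto M\times T_0I$, and the map $\overline{\d\phi}:\nu(\tilde G_M)\to \nu(I_{\{0\}})=T_0I$ becomes projection onto the second factor.

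Applying Corollary~\ref{thm:groupoid-ehresmann} then supplies open neighborhoods $U\subset T_0I$ and $V\subset I$ of the origin, a diffeomorphism $\alpha:U\xto{\cong}V$, and a groupoid isomorphism $\tilde\alpha:G\times U\xto{\cong}\phi^{-1}(V)$ fitting into the commutative square
$$\xymatrix@C=40pt{G\times U\ar[d]_{\pr_2}\ar[r]^-{\tilde\alpha}&\phi^{-1}(V)\ar[d]^\phi\\ U\ar[r]^-\alpha & V.}$$
Precomposing $\tilde\alpha$ with $\id_G\times\alpha^{-1}$ produces an isomorphism of Lie groupoids $G\times V\cong\tilde G|_V$ over $\id_V$, which is exactly a trivialization of the deformation on the neighborhood $V$ of $0$. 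Shifting the base point from $0$ to any nearby $\epsilon\in I$ (where properness of $\phi$ keeps the fiber compact) gives local triviality on a neighborhood of every point of a neighborhood of $0$ in $I$.

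The conceptual content has already been packaged into the Groupoid Ehresmann Theorem, so no further hard estimate is needed. The one step that requires care, and is the main potential obstacle, is the identification of $\nu(\tilde G_M)$ with the product groupoid $G\times T_0 I$; this rests squarely on the local triviality of $\phi^{(0)}$ and $\phi^{(1)}$ built into the definition of a deformation, together with the fact that $M$ and $G$ are transverse fibers of these submersions.
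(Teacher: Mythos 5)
Your proposal is correct and follows essentially the same route as the paper: reduce to the Groupoid Ehresmann Theorem (Corollary \ref{thm:groupoid-ehresmann}) applied to the proper split fibration $\phi:\tilde G\to(I\toto I)$ over the saturated submanifold $\{0\}$, and observe that the linear local model around the central fiber is the trivial family $G\times T_0I\to T_0I$. You simply spell out the hypothesis-checking (properness of the total groupoid, the canonical flat cleavage of a family, the identification of the normal bundle groupoid) that the paper leaves implicit.
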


\begin{proof}
By working locally we can assume $I=\R^k$. Let $G$ be a compact Lie groupoid and let $\phi:(\tilde G\toto \tilde M)\to(\R^k\toto\R^k)$ be a  deformation. Since $\phi$ is a proper fibration, it follows from Corollary \ref{thm:groupoid-ehresmann} that the family is linearizable over open sets $U,\tilde U=\phi^{-1}(U)$. Observe that the local linear model around the central fiber is just the trivial family $G\times\R^k\to\R^k$. The result follows.
\end{proof}



Theorem \ref{thm:rigidity} is a far-reaching generalization of several classic rigidity results in differential geometry, which one can deduce as immediate corollaries:
\begin{itemize}
\item the rigidity of Lie group structures on a compact manifold, implied by the deformation theory of Lie algebras (\cite{nr}); 
\item the rigidity of smooth actions $K\action M$ of a fixed compact group on a fixed compact manifold, obtained by Palais and Stewart (\cite{palais}).
\end{itemize}

The classical results on rigidity of compact fibrations, and more generally foliations, obtained by Epstein-Rosenberg and Hamilton (\cite{rosenberg1,hamilton}), also admit versions that can be derived from our framework. This is elaborated in \cite{dhf-note}.

It is natural to wonder if rigidity holds more generally in the context of proper Lie groupoids. In \cite[Rmk 7.3]{weinstein}, Weinstein raises the question of rigidity, first with fixed source and target, and then in the general case. For the general case, the answer is negative as can be shown by constructing an example of non-trivial deformation of proper group actions.

\begin{example}
Every smooth action $\rho:G\action\R^n$ with at least one fix point, say $0$, can be easily deformed into a linear action, by setting $\rho^\eps_g(x):=\frac 1 \eps \rho_g(\eps x)$. Thus, the construction of actions whose fixed locus is not diffeomorphic to a linear subspace is a source of examples of non-trivial deformations \cite{cms,pr}.
For a concrete simple example, let $\Z_2$ act on $\R$ by reflection in the origin and trivially on an exotic $\R^4_e$. Then the induced diagonal action $\rho:\Z_2\action \R\times \R^4_e\simeq\R^5$ is not isomorphic to a linear action, for its fixed point sets is just $\R^4_e$.
\end{example}


One can also use this example to produce s-proper foliation groupoids, with connected s-fibers, which are not rigid. These examples show that rigidity does not hold in general for proper groupoids or for source-proper groupoids, not even if the source map is locally trivial. However, using our methods, one can easily prove that deformations of proper groupoids with prescribed source and target are trivial, a result conjectured by A. Weinstein and also proved in \cite{cms} by less direct methods:

\begin{theorem}[{\cite[Theorem 7.3]{cms}}]
A $k$-parameter deformation $G\times\R^k\to \R^k$ of a proper groupoid $G$ that fixes the source and target is trivial.
\end{theorem}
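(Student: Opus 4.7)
The plan is to realize the deformation as a split fibration between proper Lie groupoids and apply the linearization machinery, showing that the linear model at the central fiber is the trivial product.

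First, since the source and target of $\tilde G$ coincide with those of the product family, the underlying manifold of $\tilde G$ is $G\times\R^k$ with anchor $(\tilde t,\tilde s)=(t,s)\times\id_{\R^k}$. Hence $\tilde G$ is proper because $G$ is. Viewing $\R^k$ as a unit groupoid, the projection $\phi:\tilde G\to\R^k$ is a family of Lie groupoids in the sense of Example~\ref{ex:fibrations}~(b), and therefore a split fibration.

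Second, I would identify the linear model of $\phi$ at the central fiber $\{0\}\subset\R^k$ explicitly. The restriction $\tilde G_{\tilde S}$ equals $G$, sitting inside $\tilde G=G\times\R^k$ as the slice $\eps=0$, and its normal bundle is the trivial bundle $G\times\R^k\to G$. Because $\tilde s$ and $\tilde t$ are already in product form, their normal derivatives are the identity on $\R^k$, so the linearized source and target coincide with the product ones. For the linearized multiplication, given $(g,h)\in G^{(2)}$ and $v\in\R^k$, lifting to the path $((g,tv),(h,tv))$ in $\tilde G^{(2)}$ yields
\[
\tilde m((g,tv),(h,tv))=(m_{tv}(g,h),tv),
\]
whose derivative at $t=0$ has tangential component $\partial_t m_{tv}(g,h)|_{t=0}\in T_{m(g,h)}G$ and normal component $v\in T_0\R^k$. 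Projection to the normal bundle kills the tangential part, so $\nu(\tilde m)((g,v),(h,v))=(m(g,h),v)$. Analogous checks for the unit and inverse confirm that the linear model of $\phi$ is precisely the trivial product fibration $G\times\R^k\to\R^k$.

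Third, apply Corollary~\ref{cor:linear:split:fib} to the split fibration $\phi$ between the proper groupoids $\tilde G$ and $\R^k$. Combined with the identification of the linear model from the previous step, this yields a commutative linearization diagram exhibiting the deformation as isomorphic to the trivial family on an open subgroupoid containing the central fiber. The base linearization of $\R^k$ around $\{0\}$ is the identity, so the diagram amounts to the desired triviality around $\eps=0$.

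The main obstacle is the explicit identification of the linear model: verifying that fixing source and target forces the normal groupoid to be the trivial product requires careful tracking of how the variation of $m_\eps$ interacts with the normal direction. Once this is done, the conclusion follows directly from the linearization of split fibrations, without requiring the full Groupoid Ehresmann theorem (which would demand $\phi$ to be proper, a condition that fails for non-compact $G$).
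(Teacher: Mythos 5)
Your setup is correct (properness of $\tilde G=G\times\R^k$, the projection to $\R^k$ being a split fibration, and the identification of the linear model with the trivial product family — the latter computation is fine and matches what the paper uses implicitly), but the final step has a genuine gap. Corollary \ref{cor:linear:split:fib} only produces an isomorphism between \emph{some} open subgroupoids $\tilde U\subset \nu(\tilde G_0)=G\times\R^k$ and $\tilde V\subset \tilde G=G\times\R^k$ containing the central fiber $G\times\{0\}$. Triviality of the deformation, by definition, requires an equivalence with the product family over a neighborhood of $0$ in $\R^k$, i.e.\ an isomorphism defined on a whole tube $\phi^{-1}(B)=G\times B$. When $G$ is non-compact, an open neighborhood of $G\times\{0\}$ in $G\times\R^k$ need not contain any such tube (it can pinch off at infinity in $G$), and — as you yourself note — the tube principle and the Groupoid Ehresmann theorem are unavailable because $\phi$ is not proper. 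So your argument proves weak linearizability of $\phi$ around the central fiber, which is strictly less than the statement.

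The paper closes exactly this gap by controlling the \emph{domain} of the exponential maps through a careful choice of metric: one starts with the product $1$-metric $\eta^{(1)}\times\eta_0$ on $\tilde G$ ($\eta_0$ Euclidean on $\R^k$), feeds it into the fibered gauge trick of Theorem \ref{thm:Riemannian:submersions}, and invokes Lemma \ref{lemma:trick-fix} to guarantee that the resulting $2$-metric still induces the product metric $\eta^{(0)}\times\eta_0$ on the units. Then $\exp^{(0)}:\nu(M)\to M\times\R^k$ is defined for all time (the normal geodesics are just straight lines in the $\R^k$-factor), and the properness of the anchor $(t,s):G\to M\times M$ — here is where the hypothesis that source and target are \emph{fixed} product maps is used a second time — lets one propagate global definedness to $\exp^{(1)}$ and $\exp^{(2)}$. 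This yields a linearization on an actual tube rather than on an uncontrolled open subgroupoid, which is what triviality requires. To repair your proof you would need to add this metric-selection step (or some other mechanism forcing the linearizing neighborhood to contain $G\times B$).
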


\begin{proof}
By hypothesis, the source and target of the total groupoid $G\times\R^k\toto M\times\R^k$ can be written as product maps $s\times\id$ and $t\times\id$, though the multiplication, inversion and unit may vary with the parameter. Let $\eta^{(1)}$ be any 1-metric on $G\toto M$ (which exists, because $G$ is proper), and let $\eta_0$ be the euclidean metric on $\R^k$. The product metric $\eta^{(1)}\times\eta_0$ is a 1-metric on $\tilde G$ and the projection $G\times\R^k\to\R^k$ is Riemannian. Hence, we can use it as an input for the fibered gauge trick (see Theorem \ref{thm:Riemannian:submersions}), and obtain a 2-metric $\tilde\eta^{(2)}$ on $\tilde G$ that induces the metric $\eta^{(0)}\times\eta_0$ on the units (cf.~Lemma \ref{lemma:trick-fix}). It follows that $\tilde \exp^{(0)}:\nu(M)\to M\times\R^k$ is globally defined and a diffeomorphism. Since $G\to M\times M$ is proper, the same holds true for $\tilde\exp^{(1)}$. Hence, we have a groupoid isomorphism $\tilde\exp:\nu(G)\to G\times\R^k$. Since $\nu(G)\simeq G\times\R^k$ is just the trivial fibration, the result follows.
\end{proof}


\section{The stack perspective}
\label{sec:stack}

\def\C{{\mathcal C}} 
\def\op{^\circ}

A stack can be thought of as a generalization of the notion of space, which allows at the same time for internal symmetries and singular behavior, of the type one often encounters in the study of moduli spaces of geometric or algebraic structures. We refer for details to the original monograph by Giraud \cite{giraud}, or the more recent survey \cite{vistoli}. The smooth version, discussed for instance in \cite{bx,lerman,metzler}, admits an alternative formulation, avoiding the classic paraphernalia, under which  differentiable stacks are Lie groupoids modulo Morita equivalences. We adopt here this approach and we express our linearization results from the previous sections in the language of stacks. Before we can formulate our results, will we need to develop some basic notions related to maps between stacks, such as immersions and submersions, which do not seem to be available in the literature.

\subsection{Morita maps and differentiable stacks}
Let us recall that a Lie groupoid map $\phi:\tilde G\to G$ is {\bf Morita} if it is 
 {\bf fully faithful} and  {\bf essentially surjective}. The first condition means that it induces a good fiber product of manifolds (\cite[2.2]{dh})
$$\xymatrix{\ar@{}[dr]|{\rm fp}
\tilde G \ar[r]^\phi \ar[d]_{(\tilde s\times \tilde t)}
& G \ar[d]^{(s,t)}\\
\tilde M\times \tilde M \ar[r]^{\phi\times\phi} & M\times M,}$$
while the second condition means that the following map is a surjective submersion:
$$G\times_{M}\tilde M\to M \qquad (y\xfrom{g} \phi(x),x)\mapsto y.$$

A Lie groupoid $G\toto M$ defines an {\bf orbit stack} $[M/G]$. Two Lie groupoids $G,H$ are {\bf Morita equivalent}, or equivalently, yield isomorphic orbit stacks, if there is a third Lie groupoid $\tilde G$ and Morita maps $G\from \tilde G\to H$. By a {\bf differentiable stack} we will mean the orbit stack $[M/G]$ of some groupoid.


In \cite[Theorem 4.3.1]{dh} the notion of Morita map $\phi:\tilde G\to G$ was reformulated in terms of the map induced between the orbit spaces $\bar\phi:\tilde M/\tilde G\to M/G$, the morphisms between the isotropy groups $\phi_x:\tilde G_x\to G_{\phi(x)}$, and the morphisms between the normal vector spaces to the orbits $\overline{\d_x\phi}:\nu_x(O)\to \nu_{\phi(x)}(O)$. We need the following slight improvement, which gives independent characterizations of fully faithful and essentially surjective maps. 

\begin{proposition}\label{prop:characterization}
Let $\phi:\tilde G\to G$ be a Lie groupoid map. Then:
\begin{enumerate}[(i)]
 \item $\phi$ is fully faithful if and only if $\bar\phi$ is injective, $\phi_x$ is an isomorphism for all $x\in \tilde M$, and $\overline{\d_x\phi}$ is a monomorphism for all $x\in \tilde M$.
 \item $\phi$ is essentially surjective if and only if $\bar\phi$ is surjective and $\overline{\d_x\phi}$ is an epimorphism for all $x\in \tilde M$; in such a case, the map $\bar\phi$ is open.
\end{enumerate}
Thus, $\phi$ is Morita if and only if $\bar\phi$ is a homeomorphism, and $\phi_x$ and $\overline{\d_x\phi}$ are isomorphisms for all $x\in \tilde M$.
\end{proposition}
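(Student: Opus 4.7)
The plan is to break the proposition into three biconditionals---(i), (ii), and the Morita statement as their conjunction---and handle each direction of (i) and (ii) separately. The key reformulations I will use are: fully faithfulness is equivalent to the map
$$\Phi:\tilde G\to G\times_{M\times M}(\tilde M\times \tilde M),\quad \tilde g\mapsto(\phi(\tilde g),\tilde s(\tilde g),\tilde t(\tilde g))$$
being a diffeomorphism onto a good fiber product, and essential surjectivity is equivalent to $\tau:G\times_M\tilde M\to M$, $(g,x)\mapsto t(g)$, being a surjective submersion. The bridge between global and infinitesimal information is the Lie algebroid identification $A_{\tilde G}|_x\cong A_G|_{\phi(x)}\times_{T_{\phi(x)}M}T_x\tilde M$, which is forced by differentiating $\Phi$ at a unit.

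For the forward direction of (i), restricting $\Phi$ over the diagonal recovers the isotropy isomorphism; lifting an arbitrary arrow $g:\phi(x)\to\phi(y)$ via $\Phi^{-1}$ forces $x$ and $y$ into a common $\tilde G$-orbit, giving injectivity of $\bar\phi$; and the algebroid identification yields $T_x\tilde O=(d\phi)^{-1}(T_{\phi(x)}O)$, which passes to quotients as injectivity of $\phi_x$ on normals. Conversely, assuming the three conditions, I would invert $\Phi$ set-theoretically as follows: $\bar\phi$ injective reduces each lifting problem to a pair $x,y$ in a common $\tilde G$-orbit, and then the isotropy iso provides a unique lift. Smoothness of the inverse is then obtained by verifying that the algebroid identification $A_{\tilde G}|_x\cong A_G|_{\phi(x)}\times_{T_{\phi(x)}M}T_x\tilde M$ holds at every unit: the kernel-of-anchor piece is exactly the isotropy iso, and the image-of-anchor piece is controlled by the normal mono (given $(a,\tilde v)$ with $\rho_G(a)=d\phi(\tilde v)$, normal mono forces $\tilde v\in T_x\tilde O$, and the isotropy iso then supplies the preimage); hence $d\Phi$ is a linear isomorphism at every unit, and groupoid translation extends this to all of $\tilde G$.

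For (ii), surjectivity of $\tau$ is equivalent to $\bar\phi$ surjective, and a direct tangent-space computation at $(u_{\phi(x)},x)$ shows that submersivity of $\tau$ reduces modulo orbit directions to the requirement $\mathrm{image}(d\phi)+T_{\phi(x)}O=T_{\phi(x)}M$, which is precisely $\phi_x$ epi on normals; the converse is the reverse reconstruction. Openness of $\bar\phi$ follows from the fact that both projections $\pi_2:G\times_M\tilde M\to\tilde M$ and $\tau:G\times_M\tilde M\to M$ are open surjections (the first as a base change of the submersion $s$, the second by essential surjectivity), so the $G$-saturation of $\phi(U)$ for any saturated open $U\subset\tilde M$ can be written as $\tau(\pi_2^{-1}(U))$ and is therefore open. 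The Morita statement is then the logical conjunction: $\bar\phi$ is a homeomorphism iff it is bijective and open, and $\phi_x$ is an isomorphism iff it is iso on isotropy and both mono and epi on normals. The main obstacle I expect is the converse of (i)---establishing smoothness of $\Phi^{-1}$ globally when orbit types vary with $x$---and my strategy is to use $\bar\phi$ injective to localize to a single orbit at a time and then exploit the uniform infinitesimal control from isotropy iso and normal mono to apply the inverse function theorem at units, before translating along arrows.
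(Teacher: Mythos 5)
Your forward directions of (i) and (ii), the openness argument, and the reduction of the Morita statement to the conjunction are all fine (the paper in fact simply cites \cite[Thm 4.3.1]{dh} for statement (ii) and for the forward half of (i), so you are re-proving known material there, which is harmless). The genuine gap is in the converse of (i), which is the only part the paper actually proves. In this paper ``fully faithful'' means the square is a \emph{good} fiber product: the set $S=\{(t(g),s(g),\phi(g)):g\in\tilde G\}$ must be an \emph{embedded} submanifold of $\tilde M\times\tilde M\times G$ with tangent space $T_pS_1\cap T_pS_2$, where $S_1=\{s(g')=\phi(x)\}$ and $S_2=\{t(g')=\phi(y)\}$. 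Your plan --- invert $\Phi$ set-theoretically, check that $\d\Phi$ is an isomorphism onto the expected tangent space at units, and translate --- establishes at best that $\Phi$ is an injective immersion whose image is the set-theoretic fiber product and whose differential hits $TS_1\cap TS_2$. That does not yield embeddedness of $S$: a bijective immersion onto a subset with the ``correct'' tangent spaces can still fail to be a topological embedding (points of $S$ far away in $\tilde G$ could accumulate onto a given point of $S$ in the ambient topology). Moreover, you cannot invoke the inverse function theorem for $\Phi^{-1}$ until the target $S$ is known to be a manifold, which is exactly what is at stake.

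The paper closes this gap with a separate general lemma: a \emph{clean} intersection of two embedded submanifolds (here $S=S_1\cap S_2$ with $T_pS=T_pS_1\cap T_pS_2$, the cleanness being supplied by your isotropy-iso and normal-mono hypotheses) is itself embedded. Its proof is a local argument excluding extraneous accumulation points, and it is the real content of the converse of (i). Your infinitesimal computation identifying $A_{\tilde G}|_x$ with $A_G|_{\phi(x)}\times_{T_{\phi(x)}M}T_x\tilde M$ is precisely what verifies the cleanness hypothesis, so your proposal becomes a proof once you insert this lemma (or an equivalent local embeddedness argument) in place of the inverse-function-theorem step.
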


\begin{proof}
In \cite[Thm 4.3.1]{dh} it is proved the second statement and half of the first one. It only remains to show that if $\bar\phi$ is injective and $\phi_x$ is a monomorphism for every $x$, then $\phi$ is fully faithful. Under these assumptions, it is easy to see that the anchor maps define a set-theoretic fiber product. To see that it is in fact a good fiber product of manifolds we need to show that the subset
$$S=\{(t(g),s(g),\phi(g)):g\in \tilde G\}\subset \tilde M\times \tilde M\times G$$
is an embedded submanifold with the expected tangent space. Note that $S$ can be written as the intersection of the two embedded submanifolds
$$S_1=\{(y,x,g'): s(g')=\phi(x)\} \qquad S_2=\{(y,x,g'): t(g')=\phi(y)\}$$
and, from the assumption on the tangent spaces, the intersection is {\bf clean}, namely 
$T_xS=T_xS_1\cap T_xS_2$ for all $x\in S$. Item (i) now follows from the general fact that clean intersection of embedded submanifolds is embedded, that we leave as exercise. 
\end{proof}

The previous proposition gives some geometric intuition into the notion of orbit stack of $G\toto M$: it is an enhanced version of the orbit space $M/G$, endowed with smooth information encoded in the normal representations $G_x\action \nu_x(O)$.



A map $\phi:\tilde G\to G$ is a {\bf Morita fibration} if it is Morita and it is a fibration. There is some redundancy among these axioms: $\phi$ is a Morita fibration if and only if it is fully faithful and a surjective submersion on objects, or equivalently, if and only if $\phi$ is a fibration whose kernel is the submersion groupoid $\tilde M\times_M\tilde M\toto \tilde M$.

As a converse for the last statement, if $\tilde G\toto \tilde M$ is a Lie groupoid and $K\toto \tilde M$ is a subgroupoid that is proper and free (trivial isotropy), we can define a {\bf quotient groupoid} $G\toto M$ by setting $M=\tilde M/K$ and $G=\tilde G/K\times K$, where $K\times K\action \tilde G$ is the action by left and right multiplication. These are well-defined manifolds (see eg. \cite[2.3]{dh}), and the structure maps descend to yield a Lie groupoid $G\toto M$, for which the projection $\tilde G\to G$ is a Morita fibration.
This way we get the following:

\begin{proposition}
\label{prop:quotient}
There is a 1-1 correspondence between Morita fibrations $\tilde G\to G$ and free proper wide subgroupoids $K\to \tilde G$.
\end{proposition}


\subsection{Stacky immersions, submersions and embeddings}
\label{sec:mapsstacks}



Recall the discussion of homotopy fiber product in Section \ref{section:split-fibration} (see also \cite{dh, mm}). 
Given $\phi_1:G_1\to H$ and $\phi_2:G_2\to H$ Lie groupoid maps, their  homotopy fiber product $G_1\tilde\times_H G_2$ fits into a square that commutes up to isomorphism of Lie groupoid maps, and that is universal for that property.
$$\xymatrix{ \ar@{}[dr]|{\rm hfp}
G_1\tilde\times_H G_2 \ar[d]_{\tilde\phi_2} \ar[r]^{\tilde\phi_1} \ar@{}[dr]|(.7){} & G_2 \ar[d]^{\phi_2}\\ G_1 \ar[r]_{\phi_1}  & H}$$
We refer to $\tilde\phi_1$ and $\tilde\phi_2$ as the {\bf homotopy base change} of $\phi_1$ and $\phi_2$, respectively. The homotopy fiber product may not exist for general $\phi_1$ and $\phi_2$. The following is a sharp formulation of a well-known result (see e.g. \cite[4.4]{dh}). It readily implies that Morita maps are stable under homotopy base change.

\begin{proposition}
If $\phi_1$ is essentially surjective then the homotopy fiber product exists and its homotopy base change map $\tilde\phi_1$ is a split fibration.
Moreover, $\phi_1$ is fully faithful if and only if $\tilde\phi_1$ is so.
\end{proposition}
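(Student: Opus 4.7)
The plan is to address the three assertions in sequence: that $G_1\tilde\times_H G_2$ is a well-defined Lie groupoid, that $\tilde\phi_1$ is a split fibration, and that the fully-faithful equivalence holds.

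For the first assertion, I would exhibit the object and arrow manifolds of $G_1\tilde\times_H G_2$ as good fiber products. The essential surjectivity of $\phi_1$ says that $H\times_M M_1\to M$, $(h,x_1)\mapsto t(h)$, is a surjective submersion; composing with inversion in $H$ gives a surjective submersion $M_1\times_M H\to M$, $(x_1,h)\mapsto s(h)$. Pulling this back along $\phi_2^{(0)}$ then produces the object manifold $M_1\times_M H\times_M M_2$ as a good fiber product, since any pullback of a surjective submersion is good. An analogous base change over the source maps of $G_1$ and $G_2$ yields the arrow manifold.

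For the split fibration, the canonical cleavage is
\[
\sigma(g_2,(x_1,h,x_2)):=(\id_{x_1},g_2),
\]
regarded as an arrow in $G_1\tilde\times_H G_2$ from $(x_1,h,x_2)$ to $(x_1,h\phi_2(g_2)^{-1},t(g_2))$. Smoothness is clear, unitality says $\sigma(\id_{x_2},\cdot)=\id$, and flatness follows from $(\id_{x_1},g_2)(\id_{x_1},g_2')=(\id_{x_1},g_2g_2')$. That $\tilde\phi_1$ is a fibration follows since $\tilde\phi_1^{(0)}$ is the base change of the surjective submersion $M_1\times_M H\to M$ along $\phi_2^{(0)}$, and $\phi'$ is the base change of the source map $s\colon G_1\to M_1$ along a projection from the arrow manifold, hence both are surjective submersions.

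The direction $(\Rightarrow)$ of the last statement is the key content. Assuming $\phi_1$ is fully faithful, given source/target objects $(x_1,h,x_2),(x_1',h',x_2')$ in $G_1\tilde\times_H G_2$ and an arrow $g_2\colon x_2\to x_2'$ in $G_2$, any arrow of the homotopy fiber product with this data is determined by some $g_1\colon x_1\to x_1'$ satisfying $\phi_1(g_1)=h'\phi_2(g_2)h^{-1}$; full faithfulness of $\phi_1$ produces such a $g_1$ uniquely, and the good-fiber-product witness for $\tilde\phi_1$'s full faithfulness is inherited from the one for $\phi_1$. For the converse, I would argue via Proposition~\ref{prop:characterization}: essential surjectivity of $\phi_1$ provides, over every $x_1\in M_1$, enough objects $(x_1,h,x_2)$ in the homotopy fiber product to transfer the injectivity of $(\tilde\phi_1)_{(x_1,h,x_2)}$ on isotropies and normal directions back to $(\phi_1)_{x_1}$, and likewise the injectivity of $\bar{\tilde\phi}_1$ back to that of $\bar\phi_1$.

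The main technical obstacle, I expect, lies in verifying the smooth good-fiber-product condition for $\tilde\phi_1$'s full faithfulness in the $(\Rightarrow)$ direction: one must chain the fiber-product square witnessing $\phi_1$'s full faithfulness with the base-change diagrams defining the homotopy fiber product, and carefully check that transversality survives composition.
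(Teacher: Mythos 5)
The paper itself gives no proof of this proposition (it only points to \cite[4.4]{dh}), so your argument has to stand on its own. Your treatment of the first two assertions is correct and is the standard route: the object and arrow manifolds are good fiber products obtained by base-changing the surjective submersion coming from essential surjectivity of $\phi_1$, and $\sigma(g_2,(x_1,h,x_2))=(\id_{x_1},g_2)$ is indeed a unital flat cleavage for $\tilde\phi_1$. The ``main technical obstacle'' you flag in the forward direction of the last claim is also easily dispatched: since $\tilde\phi_1^{(0)}$ is a surjective submersion, so is $\tilde M\times\tilde M\to M_2\times M_2$, hence its fiber product with the anchor of $G_2$ is automatically good; the comparison map from the arrow manifold to this fiber product is then a diffeomorphism because the assignment sending $((x_1,h,x_2),(x_1',h',x_2'),g_2)$ to the unique $g_1$ with $\phi_1(g_1)=h'\phi_2(g_2)h^{-1}$ is the composite of a smooth map into $(M_1\times M_1)\times_{N\times N}H$ with the inverse of the good-fiber-product diffeomorphism onto $G_1$ provided by full faithfulness of $\phi_1$.

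The genuine gap is in your converse. You assert that essential surjectivity of $\phi_1$ ``provides, over every $x_1\in M_1$, enough objects $(x_1,h,x_2)$''. It does not: an object of $G_1\tilde\times_H G_2$ over $x_1$ requires an arrow $h:\phi_2(x_2)\to\phi_1(x_1)$, i.e., it requires $\phi_1(x_1)$ to lie in the essential image of $\phi_2$, whereas essential surjectivity of $\phi_1$ says that every object of $H$ is reached from the image of $\phi_1$ --- the opposite condition. When the fibers of $\tilde\phi_2^{(0)}:\tilde M\to M_1$ over some $x_1$ are empty, no information about $(\phi_1)_{x_1}$ can be recovered from $\tilde\phi_1$. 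Concretely, let $H$ be the disjoint union of $\Z_2\toto\ast$ at a point $a$ and the trivial groupoid at a point $b$, let $G_1$ be the unit groupoid on $\{a,b\}$ with the obvious map (essentially surjective, not fully faithful), and let $G_2$ be the unit groupoid on $\{b\}$: the homotopy fiber product is a single point and $\tilde\phi_1$ is an isomorphism, hence fully faithful. So your argument (and, strictly read, the statement itself) needs the extra hypothesis that every $\phi_1(x_1)$ is isomorphic to some $\phi_2(x_2)$ --- which does hold in the intended applications, where $\phi_2$ is essentially surjective or Morita; under that hypothesis your transfer of the conditions of Proposition \ref{prop:characterization} back to $\phi_1$ can be carried out.
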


A {\bf map of stacks} $\psi/\phi:[\tilde M/\tilde G]\to [M/G]$ is given by a fraction of Lie groupoid maps,
\[ \xymatrix{(\tilde G\toto \tilde M)& (H\toto N) \ar[l]_{\phi}^{\sim}\ar[r]^{\psi}& (G\toto M)} \] 
 where the first leg $\phi$ is a Morita map. We identify two fractions $\psi/\phi$ and $\psi'/\phi'$ if there are  Morita maps $\alpha,\alpha'$ and isomorphisms of maps $\phi\alpha\cong\phi'\alpha'$, $\psi\alpha\cong\psi'\alpha'$. Maps of stacks can be composed using homotopy fiber products.
It is an instructive exercise to check that composition is well-defined, associative, and that a Morita equivalence is the same as an invertible map.

Immersion and submersion of stacks arise naturally, once we think of the orbit stack $[M/G]$ as a ``smooth orbit space", with the normal representation $G_x\action \nu_x(O)$ playing the role of the tangent space to $[M/G]$ at the point $[x]=O$. 
We say that a stacky map $\psi/\phi:[\tilde M/\tilde G]\to [M/G]$ is a {\bf surjective submersion} if $\psi$ is essentially surjective, and a {\bf  injective immersion} if $\psi$ is fully faithful.
It follows from Proposition \ref{prop:characterization} that surjective submersions and injective immersions are well-defined, and that they extend the usual notions for manifolds. 
Using the canonical factorization $\phi=\phi'\iota$ from Section \ref{section:split-fibration}, one can find nice representatives of such maps, showing that our definitions agree with those in \cite[Def 2.9]{bnz}.






In order to define embedding of stacks we need first to define what the image  is. Given $\phi:G\to H$ any Lie groupoid map, its {\bf essential image} consists of the objects and arrows that are in the image up to isomorphism, or more precisely, is the image of the associated map $\phi':G'\to H$ with respect to the factorization $\phi=\phi'\iota$.  

\begin{proposition}\label{prop:image}
The essential image of a fully faithful map $\phi:G\to H$ is a full saturated Lie subgroupoid $i:I\to H$.
\end{proposition}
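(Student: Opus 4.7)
My plan is to identify the essential image with the full subgroupoid $H_{I^{(0)}}$, where $I^{(0)} := t(s^{-1}(\phi(G^{(0)})))$ is the $H$-saturation of $\phi(G^{(0)})$, and then argue that $I^{(0)}$ is a smooth submanifold of $H^{(0)}$. By Remark \ref{rmk:factorization}, the essential image is the image of $\tilde\phi : G\tilde\times_H H \to H$, and $\tilde\phi^{(0)}(x,h) = s(h)$ ranges exactly over $I^{(0)}$; saturation is automatic from this description. For fullness on arrows, I would invoke full faithfulness of $\phi$ directly: given $y, y' \in I^{(0)}$ with witnesses $h: y \to \phi(x)$ and $h': y' \to \phi(x')$, any $k: y \to y'$ in $H$ yields $h' k h^{-1}: \phi(x) \to \phi(x')$, which lifts uniquely to some $g \in G(x, x')$, and then $(g, k)$ is an arrow in $G\tilde\times_H H$ with $\tilde\phi(g,k) = k$.

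The heart of the argument is to show that $I^{(0)}$ is a smooth submanifold of $H^{(0)}$. I would analyze the differential of $\tilde\phi^{(0)}:(x,h)\mapsto s(h)$ at a point $(x,h)$ using the characterization of full faithfulness in Proposition \ref{prop:characterization}: the injectivity of the normal map $\nu_x(O)\hookrightarrow \nu_{\phi(x)}(O)$, combined with the isotropy isomorphism $G_x\cong H_{\phi(x)}$, identifies the image of $\d\tilde\phi^{(0)}$, modulo the tangent to the $H$-orbit at $y$, with the pullback via the normal representation along $h$ of a distinguished invariant subspace of $\nu_{\phi(x)}(O)$ of locally constant dimension. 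This exhibits $I^{(0)}$ near $y$ as the union of $H$-orbits through a well-defined invariant transversal, and hence locally as a submanifold; $H$-saturation propagates the local smooth structure along orbits by translation. Once $I^{(0)}\subset H^{(0)}$ is known to be embedded and saturated, the restriction $I:=H_{I^{(0)}}\toto I^{(0)}$ is automatically a Lie subgroupoid, full and saturated by construction, and coincides with the essential image by the preceding paragraph.

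The main obstacle is the smoothness of $I^{(0)}$: the rank of $\tilde\phi^{(0)}$ can jump at points where nearby $H$-orbits have strictly larger dimension than the one through $y$, so a uniform constant-rank argument is not directly available. A cleaner path, open when $H$ is proper, is to linearize $H$ around the orbit of $y$ and observe that the saturation of an invariant linear subspace under the linearized action is itself an invariant subspace, hence embedded; in the general case, a local slice argument around each orbit of $H$ meeting $\phi(G^{(0)})$ should play the analogous role, possibly combined with a Morita-replacement of $\tilde\phi$ by a representative with better-behaved object component.
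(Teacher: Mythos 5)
There is a genuine gap here, and it is precisely the step you flag as ``the main obstacle.'' Your plan requires showing that $I^{(0)}$, the saturation of $\phi(G^{(0)})$ in $H^{(0)}$, is an \emph{embedded} submanifold, and neither of your suggested fixes (constant-rank analysis of $\tilde\phi^{(0)}$, or linearization/slices) can close it --- because the claim is false in general. The paper is careful on this point: immediately after this proposition it notes that the essential image of an injective immersion of stacks ``in general fails to be embedded,'' and the $\Z_4\ltimes\R^2$ example at the end of Section 5 illustrates how a full saturated Lie subgroupoid can fail to be embedded. The word ``Lie subgroupoid'' in the statement must be read in the injectively immersed sense, so any strategy that tries to endow $I$ with the subspace smooth structure inherited from $H$ is aiming at a stronger and generally untrue statement. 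The rank-jumping phenomenon you correctly identify is not a technical nuisance to be engineered around; it is the reason embeddedness genuinely fails.

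The paper's proof avoids the issue by never trying to recognize $I^{(0)}$ inside $H^{(0)}$ at all: it builds the smooth structure on $I$ abstractly and then maps it into $H$. Concretely, after replacing $\phi$ by the almost fibration $\tilde\phi:G\tilde\times_H H\to H$ of Remark \ref{rmk:factorization}, the kernel $K$ of $\tilde\phi$ is an embedded wide subgroupoid (it is the preimage of the units under the surjective submersion $\tilde\phi'$), and full faithfulness forces $K$ to be free and proper. Proposition \ref{prop:quotient} then produces a Lie groupoid quotient $(G\tilde\times_H H)/K$ with Morita projection, and the induced map $(G\tilde\times_H H)/K\to H$ is fully faithful, hence by Proposition \ref{prop:characterization} an injective immersion; its image is the essential image, full and saturated by the (correct) set-theoretic argument you give in your first paragraph. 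If you want to salvage your approach, the repair is to drop the ambition of embeddedness and instead transport the smooth structure from this quotient --- which is essentially to adopt the paper's argument.
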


\begin{proof}
In the canonical factorization $\phi=\phi'\iota$ the map $\iota$ is a categorical equivalence, hence Morita. It follows from Proposition \ref{prop:characterization} that $\phi'$ is fully faithful as well.
The kernel $K'$ of $\phi'$ is a well-defined embedded subgroupoid of $G'$, for the  manifold $K'$ can be seen as the preimage of the units along $G'\to M'\times_{N}H$, and this is a surjective submersion (see Lemma \ref{lemma:associated-fibration}). Since $\phi'$ is also fully faithful, the kernel $K'$ is free and proper. By Proposition \ref{prop:quotient}, we conclude that the quotient $G'/K'$ is a well-defined Lie groupoid, and that the quotient map is Morita. Then the induced map $G'/K'\to H$ is fully faithful and, by Proposition \ref{prop:characterization}, it is an injective immersion. Its image is clearly full and saturated.
\end{proof}

We can now define the {\bf image} of a stacky injective immersion $\psi/\phi:[\tilde M/\tilde G]\dashto [M/G]$ as the orbit stack of the essential image of $\psi$.
We say that a stacky map $\psi/\phi:[\tilde M/\tilde G]\dashto [M/G]$ is an {\bf embedding} if it is an injective immersion and the essential image of 
$\psi$ is an embedded subgroupoid. This extends the usual notion for manifolds. Our next lemma shows that this is a good definition, for it does not depend on the groupoids presenting the stacks.

\begin{lemma}
\label{lem:Morita:full:pullback}
Let $\phi:\tilde G\to G$ be a Lie groupoid fibration.
\begin{enumerate}[(i)]
 \item The pull-back $\phi^*(G_S)$ of a full (embedded) Lie subgroupoid $G_S$ is a full (embedded) Lie subgroupoid.
 \item If $\phi$ is Morita, the pull-back $G_S\mapsto \phi^*(G_S)$ defines a 1-1 correspondence between full (embedded) Lie subgroupoids of $G$ and $\tilde G$.
\end{enumerate}
\end{lemma}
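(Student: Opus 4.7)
The plan is to exploit the fact that a fibration $\phi$ is a $k$-submersion in every degree (so $\phi^{(0)}$ is a surjective submersion and $\phi^{(1)}$ is a submersion), combined in part (ii) with the quotient description of Morita fibrations from Proposition \ref{prop:quotient}.

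For part (i), first set $\tilde S := (\phi^{(0)})^{-1}(S)$. Since $\phi^{(0)}$ is a submersion, $\tilde S \subset \tilde M$ is embedded whenever $S \subset M$ is. Similarly, $\phi^*(G_S) := (\phi^{(1)})^{-1}(G_S) \subset \tilde G$ is embedded because $\phi^{(1)}$ is a submersion and $G_S \subset G$ is embedded. To identify $\phi^*(G_S)$ with the full subgroupoid $\tilde G_{\tilde S}$, I would chase the commuting squares $s\circ \phi = \phi\circ \tilde s$ and $t\circ \phi = \phi\circ \tilde t$: an arrow $\tilde g\in \tilde G$ lies in $(\phi^{(1)})^{-1}(G_S)$ iff $\phi(\tilde g)\in G_S$, which by fullness of $G_S$ is equivalent to $s(\phi(\tilde g)), t(\phi(\tilde g))\in S$, i.e.\ $\tilde s(\tilde g), \tilde t(\tilde g)\in \tilde S$. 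So $\phi^*(G_S)=\tilde G_{\tilde S}$ is a full embedded Lie subgroupoid.

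For part (ii), the Morita hypothesis, via Proposition \ref{prop:quotient}, presents $\phi$ as the quotient by a free proper wide subgroupoid, whose arrow manifold must be the submersion groupoid $K=\tilde M\times_M\tilde M$ (since $\phi$ is a Morita fibration). In particular $\phi^{(0)}:\tilde M\to M$ is the quotient by the free proper equivalence relation $K$, which yields a bijection between embedded submanifolds $S\subset M$ and $K$-saturated (equivalently, $\phi$-saturated) embedded submanifolds $\tilde S\subset \tilde M$, via $S\mapsto(\phi^{(0)})^{-1}(S)$ and $\tilde S\mapsto \phi^{(0)}(\tilde S)$. Combined with part (i) this lifts to a bijection at the level of full embedded subgroupoids, with inverse $\tilde G_{\tilde S}\mapsto G_{\phi^{(0)}(\tilde S)}$.

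The main obstacle is not any single hard computation but rather pinning down the correct bookkeeping in (ii): a full embedded subgroupoid $\tilde G_{\tilde S}\subset \tilde G$ lies in the image of $\phi^*$ precisely when $\tilde S$ is $\phi$-saturated, so one must read the claimed 1-1 correspondence as implicitly restricting to such $\tilde S$ on the $\tilde G$-side (the alternative, non-saturated $\tilde S$, do not descend to submanifolds of $M$). Once this identification is in place, the bijection is exactly descent along a free proper submersion quotient, and checking that the two constructions are mutually inverse is routine.
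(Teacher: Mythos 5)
Your proof is correct and follows essentially the same route as the paper: for part (i) one takes preimages under the submersions $\phi^{(0)}$ and $\phi^{(1)}$, and for part (ii) one descends along the free proper action of the kernel $K=\tilde M\times_M\tilde M$ coming from Proposition \ref{prop:quotient}, the image $\phi^{(0)}(\tilde S)\simeq\tilde S/K$ being embedded because $(\tilde S\to \tilde S/K)\to(\tilde M\to M)$ is a map of principal $K$-bundles. Your bookkeeping remark that the correspondence in (ii) only makes sense after restricting to $K$-saturated $\tilde S$ on the $\tilde G$-side is exactly the point the paper handles by taking $\tilde S$ saturated (which forces $K$-saturation since $K\subset\tilde G$), so that caveat is right as well.
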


\begin{proof}
If $\phi:\tilde G\to G$ is a fibration and $S\subset M$ is saturated or embedded then the preimage $\phi^{-1}(S)$ is clearly saturated and or  embedded. The first statement follows easily. Regarding the second statement, if $\phi:\tilde G\to G$ is a Morita fibration and $\tilde S\subset \tilde M$ is saturated, then it is also saturated with respect to the action of the kernel $K$, for the fibers are included in the orbit. Since the action $K\action \tilde S$ is free and proper, we can construct the orbit manifold $S=\tilde S/K\simeq\phi(S)$, that comes equipped with a canonical map $S\to M$. Note that $(\tilde S\to S)\to (\tilde M\to M)$ is a map of principal $K$-bundles, thus $\tilde S$ is (embedded) submanifold if and only if $S$ is so.
\end{proof}


%
%


A stacky map between the orbit stacks $\psi/\phi:[\tilde M/\tilde G]\to [M/G]$ induces a continuous map between the orbit spaces $\overline{\psi/\phi}:\tilde M/\tilde G\to M/G$ (cf.~Proposition \ref{prop:characterization}).
In the case of proper groupoids, whose orbit stacks are called {\bf separated stacks}, we have the following characterization of embeddings:

\begin{proposition}
A map $\psi/\phi$ between separated stacks is an embedding if and only if it is an injective immersion and $\overline{\psi/\phi}$ is a topological embedding.
\end{proposition}

\begin{proof}
It is easy to see that every stacky embedding satisfies this properties. So let us show the converse. We will make a number of reductions to simplify the proof. To start with, by Proposition \ref{prop:image}, we can assume that $\phi=\id$ and that $\psi:(G_S\toto S)\to (G\toto M)$ is the inclusion of a full saturated subgroupoid that induces a topological embedding in the orbit spaces. 

Given $x\in S$, by the linearization theorem, the restriction of $G$ to some saturated open $S\subset U\subset M$ is Morita equivalent to $K\ltimes V\toto V$, the action groupoid of a linear representation of a compact group. Hence, by Lemma \ref{lem:Morita:full:pullback}, we can assume that $G$ is of this type, so we set $G=K\times V$, $M=V$ and $x=0$.

We claim that the action $K\action V$ restricts to a smooth action $K\action S$, so that $G_S$ becomes the action groupoid $K\times S$. A priori, we have two different good fiber product of manifolds, with a map $\alpha:G_S\to K\times S$ relating them. They are depicted in the front and the back of the following cube:
{\small
$$\begin{matrix}
\xymatrix@R=8pt@C=8pt{
G_S  \ar[rr] \ar[dd]_{(t,s)} \ar[rd]_{\alpha}&  &K\times V \ar'[d][dd]^--{(\rho,\pi)} \ar@{=}[dr]& \\
 &K\times S \ar[dd]_--------{\pi} \ar[rr]&  & K\times V\ar[dd]^\pi\\
S\times S \ar'[r][rr] \ar[dr]_{\pi}&  &V\times V \ar[dr]_{\pi}& \\
& S \ar[rr]& & V}
\end{matrix}$$
}
Since $S$ is saturated, $\alpha$ is bijective. Since $G_S\to K\times S\to K\times V$ is an immersion, $\alpha$ is also an immersion, and we conclude that $\alpha$ is a diffeomorphism. The restricted action $K\times S\to S$ can therefore be written as $t\circ\alpha^{-1}$, and so it is smooth.

Finally, we show that $S$ is embedded in $V$: given $x_n,x\in S$ such that $\lim x_n=x$ in $V$, we claim that convergence also holds in $S$. Consider the diagram:
$$
\xymatrix@R=15pt{ 
S \ar[d] \ar[r] & V \ar[d]\\ S/K \ar[r]  & V/K}
$$
The sequence $\bar x_n=x_n K$ has limit $\bar x=x K$ in $V/K$ and also in $S/K$, since the later is embedded. Now, the vertical maps are proper, being the quotient maps of compact group actions. Hence, there is a convergent subsequence $\{x_{n_k}\}$ in $S$, whose limit must be $x$. We conclude that $\lim x_n=x$ in $S$ and the result follows.
\end{proof}

It should be noted that if one drops the separated assumption then the previous characterization of an embedding does not hold: there are examples of full saturated subgroupoids that are not embedded but induce embeddings on the orbit space.

%

\subsection{Metrics on differentiable stacks}

\label{sec:stack2}

We are now ready to present our first main result on the geometry of stacks: a Morita invariance of groupoid metrics, which leads to a notion of metrics on differentiable stacks.
From the stack perspective a Lie groupoid is the same as a submersion onto the orbit stack:
$$G\toto M \qquad \rightleftharpoons \qquad M\to [M/G].$$

It is natural to expect that a 2-metric on a Lie groupoid $G\toto M$ induces a metric on the orbit stack $[M/G]$ making $M\to[M/G]$ a Riemannian submersion. In order to make this precise we need to understand how to relate 2-metrics on Morita equivalent groupoids. Note that different 2-metrics, even on the same groupoid, may lead to the same metric on the orbit stack, so we need to identify the resulting equivalence relation between 2-metrics.

By playing with homotopy fiber products, every Morita equivalence can be realized as a fraction of split Morita fibrations. Thus we are led to consider the pullback and pushforward of 2-metrics along Morita fibrations.
The pullback of 2-metrics along Morita fibrations is simpler:

\begin{proposition}
\label{prop:pullback}
If $\phi:\tilde G\to G$ is a Morita fibration and $\eta$ a 2-metric on $G$, then there exists a 2-metric $\tilde\eta$ on $\tilde G$ that makes the fibration Riemannian.
\end{proposition}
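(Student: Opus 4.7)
The plan is to exploit full faithfulness to identify $\tilde G^{(2)}$ as a fiber product, and then build $\tilde\eta$ via a horizontal/vertical splitting compatible with $\phi^{(2)}$.

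Since $\phi$ is fully faithful, the vertex maps yield canonical $S_3$-equivariant identifications
$$
\tilde G\;\cong\; G\times_{M\times M}(\tilde M\times\tilde M), \qquad \tilde G^{(2)}\;\cong\; G^{(2)}\times_{M^3}\tilde M^3,
$$
which are good fiber products because $\phi^{(0)}$ is a submersion. Let $\eta^{(0)}$ be the $0$-metric that $\eta$ induces on $M$ via the unit inclusion, and pick a metric $\nu$ on $\tilde M$ making $\phi^{(0)}:(\tilde M,\nu)\to(M,\eta^{(0)})$ Riemannian. The product metric on $\tilde M^3$ is then $S_3$-invariant and turns $(\phi^{(0)})^3$ into a Riemannian submersion. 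Using its horizontal distribution, at each point of $\tilde G^{(2)}$ I obtain an orthogonal splitting $T\tilde G^{(2)}=\tilde H\oplus\ker d\phi^{(2)}$, with $\ker d\phi^{(2)}\cong\bigoplus_{i=1}^3\ker d\phi^{(0)}$ (one vertical factor per vertex). I define $\tilde\eta$ by declaring $\tilde H\perp\ker d\phi^{(2)}$, putting the pullback $(\phi^{(2)})^*\eta$ on $\tilde H$, and the restriction of the product metric on $\ker d\phi^{(2)}$. By construction, $\phi^{(2)}:(\tilde G^{(2)},\tilde\eta)\to(G^{(2)},\eta)$ is a Riemannian submersion.

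Finally I verify that $\tilde\eta$ is a 2-metric. Invariance under $S_3$ is immediate, since the construction is symmetric across the three vertices and both $\eta$ and the product metric on $\tilde M^3$ are $S_3$-invariant. For transversality to a face map $\tilde\pi_i:\tilde G^{(2)}\to\tilde G$, the fiber-product identifications realize $\tilde\pi_i$ as the pair $(\pi_i,\text{drop a vertex}):G^{(2)}\times_{M^3}\tilde M^3\to G\times_{M\times M}\tilde M^2$, where $\pi_i$ is Riemannian by $2$-metricity of $\eta$ and the coordinate drop $\tilde M^3\to\tilde M^2$ is Riemannian because the metric on $\tilde M^3$ is a product. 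A tangent-level check then shows that each $\tilde\pi_i$ is a Riemannian submersion for a common induced 1-metric on $\tilde G$, which is precisely the transversality condition for a 2-metric. I expect this last compatibility to be the main obstacle: one must verify that the horizontal distribution chosen at $\tilde G^{(2)}$ projects onto the analogous one at $\tilde G$ under each face map, which reduces to the fact that horizontal lifts for product metrics are themselves product-shaped and hence compatible with coordinate drops.
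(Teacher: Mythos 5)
Your construction is correct and is essentially the paper's own proof: both exploit full faithfulness to identify $\tilde G^{(2)}$ with $G^{(2)}\times_{M^3}\tilde M^3$, then declare the pullback of $\eta$ on a lifted horizontal distribution orthogonal to a product vertical metric coming from $\phi^{(0)}$. The only cosmetic difference is that you package the choice of Ehresmann connection for $\phi^{(0)}$ and of a metric on its vertical bundle into a single choice of metric $\nu$ on $\tilde M$, which is equivalent data.
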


\begin{proof}
Let $V\subset T\tilde M$ be the vertical bundle of $\phi^{(0)}$ and let $E\subset T\tilde M$ be an Ehresmann connection for $\phi^{(0)}$, so $T\tilde M=E\oplus V$. 
Since $\phi$ is fully faithful the following are good fiber products of manifolds:
$$\xymatrix{
\ar@{}[dr]|{\rm fp}
\ar[];[d]_{\phi^{(1)}} \ar[];[r]^(.35){q} \ar[r];[dr]^{\phi^{(0)}\times\phi^{(0)}} \ar[d];[dr]_(.35){q'}
\tilde G & \tilde M \times \tilde M \\
G  &  M  \times M
}\qquad
\xymatrix{
\ar@{}[dr]|{\rm fp}
\ar[];[d]_{\phi^{(2)}} \ar[];[r]^(.35){q} \ar[r];[dr]^{\phi^{(0)}\times\phi^{(0)}\times\phi^{(0)}} \ar[d];[dr]_(.35){q'}
\tilde G^{(2)} & \tilde M \times \tilde M \times \tilde M \\
G^{(2)}  &  M \times M\times M.
}$$
It follows that we can identify the vertical bundles of $\phi^{(1)},\phi^{(2)}$ with the pullbacks of $V\times V$ and $V\times V\times V$, and that $E$ induces pullback connections $E^{(1)},E^{(2)}$ on $\phi^{(1)},\phi^{(2)}$. Hence, we can define a metric $\tilde\eta^{(2)}$ by lifting to $E^{(2)}$ the 2-metric $\eta^{(2)}$, using in $q^*(V\times V\times V)$ the product of some fixed metric $\eta^V$ in $V$, and declaring the decomposition
$T\tilde G^{(2)}=E^{(2)}\oplus q^*(V\times V\times V)$
to be orthogonal.
The resulting metric is $\phi^{(2)}$-fibred and a 2-metric on $\tilde G$, 
for the action $S_3\action \tilde G^{(2)}$ and the face maps $\tilde G^{(2)}\to\tilde G^{(1)}$ preserve the orthogonal decomposition, and the metrics on each factor are invariant under $S_3$ and transverse to the face maps.
\end{proof}


The pushforward of 2-metrics along Morita fibrations is harder. It is already clear in the case of manifolds (unit groupoids), that not every 2-metric can be pushed forward, or in other words, not every 2-metric on $\tilde G$ is $\phi$-transverse. In order to fix this problem, given a 2-metric on $\tilde G$ we slightly modify it so as to make it $\phi$-transverse:

\begin{proposition}
\label{prop:pushforward}
If $\phi:\tilde G\to G$ is a Morita fibration with kernel $K$ and $\tilde\eta$ is a 2-metric on $\tilde G$, then the cotangent average metric $\tilde\eta'$ on $K^3\action\tilde G^{(2)}$ with respect to a product averaged data is a 2-metric and descends to $G$, making $\phi$ 
Riemannian.
\end{proposition}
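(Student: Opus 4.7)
The plan is to unwind the actions of the kernel on the nerve, to apply a straight cotangent averaging for a free proper action, and to check that the three required conditions (2-metric on $\tilde G$, descending to $G^{(2)}$, making $\phi^{(2)}$ Riemannian) are simultaneously preserved by a symmetric choice of averaging data. Concretely, since $\phi$ is a Morita fibration the kernel is the submersion groupoid $K=\tilde M\times_M\tilde M\toto \tilde M$, which is free and proper once we assume the auxiliary properness implicit in the averaging. The natural left--right action $K\times K\action \tilde G$ has orbit manifold $G$ (Proposition \ref{prop:quotient}), and more generally $K^{n+1}$ acts at the $n+1$ vertices of a string of composable arrows, with orbit manifold $G^{(n)}$. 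In particular, the free proper action $K^3\action \tilde G^{(2)}$, where $(k_1,k_2,k_3)\cdot(\tilde g_1,\tilde g_2)=(k_3\tilde g_1 k_2^{-1},k_2\tilde g_2 k_1^{-1})$, has the quotient map $\phi^{(2)}:\tilde G^{(2)}\to G^{(2)}$ as orbit projection.

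Now I would fix a normalized Haar density $\mu_K$ and a connection $\sigma_K$ on $K$, and use the product $\mu_K^{\times 3}$ and $\sigma_K^{\times 3}$ as averaging data for $K^3$ acting on $\tilde G^{(2)}$. Denote the resulting cotangent average by $\tilde\eta'$. First, $\tilde\eta'$ is still transverse to each of the three face maps $\tilde G^{(2)}\to\tilde G$: each face map is equivariant with respect to the obvious surjections $K^3\to K^2$ (forgetting the vertex being collapsed), the original metric $\tilde\eta$ is transverse, and the averaging of a transverse metric along equivariant data remains transverse by (the fibered-averaging) Proposition \ref{prop:fiberd-averaging} applied with trivial base action. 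This also shows that the face maps are Riemannian with respect to the corresponding averages on $\tilde G$. Second, $\tilde\eta'$ is $S_3$-invariant: the $S_3$-action on $\tilde G^{(2)}$ permutes the three vertices and therefore also permutes the three $K$-factors; since the averaging data is the same on each factor, the symmetry of $\tilde\eta$ descends to $\tilde\eta'$. Combining the two points, $\tilde\eta'$ is again a 2-metric on $\tilde G$.

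Third, since $K^3\action\tilde G^{(2)}$ is free and proper, $\tilde\eta'$ being $K^3$-invariant means it is transverse to the orbit projection $\phi^{(2)}$, so it descends to a metric $\eta^{(2)}$ on $G^{(2)}$ making $\phi^{(2)}$ Riemannian. It remains to verify that $\eta^{(2)}$ is a 2-metric on $G$. $S_3$-invariance on the quotient is immediate from the equivariance of $\phi^{(2)}$ under the $S_3$-action on the two nerves. For transversality to face maps, note that each face map $G^{(2)}\to G$ fits in a commutative square with its counterpart $\tilde G^{(2)}\to \tilde G$ and the Morita fibrations on each side; since the upstairs square is Riemannian for $\tilde\eta'$ and the averaged face-map metric on $\tilde G$, and the vertical arrows are Riemannian submersions (being orbit projections of free proper actions for which the metrics are invariant), the induced downstairs square is Riemannian too. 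Thus $\eta^{(2)}$ is transverse to the face maps of $G$, i.e., a 2-metric.

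The main delicacy I foresee is the bookkeeping in the second step: ensuring that the averaging data chosen for $K^3$ is genuinely symmetric under the $S_3$-action permuting vertices, and that the relevant equivariances identifying the quotient of $\tilde G^{(2)}$ by $K^3$ with $G^{(2)}$ are compatible with all three face maps at once. Once these compatibilities are spelled out, the proof reduces to standard facts about cotangent averaging along free proper actions (as recalled in Section \ref{sec:review}) together with the fibered averaging Proposition \ref{prop:fiberd-averaging}.
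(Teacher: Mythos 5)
Your proposal is correct and follows essentially the same route as the paper: both set up the principal actions $K\action\tilde M$, $K^2\action\tilde G$, $K^3\action\tilde G^{(2)}$ with quotients $M$, $G$, $G^{(2)}$, equip the powers of $K$ with product averaging data so that the face maps become equivariant for fibered data, and then invoke Proposition \ref{prop:fiberd-averaging} to see that the cotangent averages remain a 2-metric and descend. Your write-up merely spells out the $S_3$-symmetry and descent checks that the paper leaves as "easy to check."
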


\begin{proof}
This requires many of the techniques we have developed before. Consider the three principal groupoid bundles, arising from Proposition \ref{prop:quotient}:
$$\theta^{(0)}:K\action \tilde M\to M \qquad 
\theta^{(1)}:K^2\action \tilde G\to G \qquad
\theta^{(2)}:K^3\action \tilde G^{(2)} \to G^{(2)}$$
Fix averaging data for $K\toto \tilde M$ and endow the products $K^2,K^3$ with the product averaging data. The various projections among these groupoids make these averaging data fibered, the sense of Section \ref{subsection:riemannian-submersion}. Now replace the metrics $\tilde\eta^{(2)},\tilde\eta^{(1)},\tilde\eta^{(0)}$ by their cotangent averages with respect to the above actions. It follows from Proposition \ref{prop:fiberd-averaging} that the resulting metrics make all the face maps Riemannian submersions. It is now easy to check that $\tilde\eta^{(2)}$ is a 2-metric that descends through $\phi$ so as to make it a Riemannian submersion.
\end{proof}


Motivated by the previous result, we say that 2-metrics $\eta_1,\eta_2$ on a Lie groupoid $G\toto M$ are {\bf equivalent} if for every $G$-orbit $O\subset M$ the metrics induced on the normal bundle $\nu(O)$ by the associated 0-metrics $\eta_1^{(0)},\eta_1^{(0)}$ coincide.
It turns out that our pullback and pushforward constructions are well-defined and mutually inverse modulo equivalence of metrics.

\begin{theorem}
\label{thm:Morita-metrics}
A Morita equivalence yields a 1-1 correspondence between equivalence classes of 2-metrics. In particular, if two Lie groupoids are Morita equivalent and one admits a 2-metric, then so does the other.
\end{theorem}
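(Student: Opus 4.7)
The plan is to build the correspondence in two steps: first give an explicit construction out of a canonical presentation of the Morita equivalence as a fraction of split Morita fibrations, and second check that the equivalence classes produced are independent of all choices and that the pushforward and pullback are mutually inverse modulo the equivalence relation.

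For the construction, I would fix a realization of the Morita equivalence $G_1\dashto G_2$ as a fraction $G_1\xfrom{\phi} H\xto{\psi} G_2$ with both $\phi,\psi$ split Morita fibrations (using Remark \ref{rmk:factorization} together with the fact that every Morita equivalence arises from a bi-principal bibundle, as discussed after the definition of generalized map). Starting from a 2-metric $\eta_1$ on $G_1$, the previous proposition on pullback gives a 2-metric $\tilde\eta$ on $H$ making $\phi$ Riemannian, and Proposition \ref{prop:pushforward} then gives a 2-metric $\eta_2$ on $G_2$ making $\psi$ Riemannian (after averaging). This defines a candidate map $[\eta_1]\mapsto[\eta_2]$ on equivalence classes, with an obvious inverse by reversing the roles of $\phi$ and $\psi$.

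The key reduction is the following local statement: if $\phi:\tilde G\to G$ is a split Morita fibration and $\eta$ is a 2-metric on $G$, then for any saturated embedded $S\subset M$ and $\tilde S=\phi^{-1}(S)\subset \tilde M$, the linearizations produced by $\exp_\eta$ and $\exp_{\tilde\eta}$ fit into a commutative diagram with $\phi$ and $\overline{\d\phi}$ (this is precisely Theorem \ref{thm:linearization:submersions}). Since $\overline{\d\phi}:\nu(\tilde G_{\tilde S})\to \nu(G_S)$ is itself a Morita fibration (by Lemma \ref{lem:Morita:full:pullback} applied to the pullback description), the fractions represented by $\exp_\eta$ and $\exp_{\tilde\eta}$ are canonically isomorphic as generalized maps. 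I would use this to argue that (i) if $\tilde\eta, \tilde\eta'$ are two 2-metrics on $H$ making $\phi$ Riemannian for the same $\eta$, they are equivalent, and symmetrically (ii) pulling back along $\phi$ and then averaging as in Proposition \ref{prop:pushforward} returns the class $[\eta]$, because the averaging only modifies the metric along the $K$-orbits, whereas by Proposition \ref{prop:averaging-properties}(a) the data relevant to the exponential linearization along $\tilde S$ lies in the normal directions to the $K$-orbits, which $\phi$ sees as an isomorphism. The same argument shows that pushforward followed by pullback returns the original class.

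Finally, I would verify independence of the fraction: given two fractions $G_1\xfrom{\phi_i} H_i\xto{\psi_i} G_2$ presenting the same Morita equivalence, one can form the homotopy fiber product $H_1\tilde\times_{G_2}H_2$ (or the common refinement $H_1\tilde\times_{G_1} H_2$), obtaining a commutative diagram of split Morita fibrations between proper groupoids. Pulling a metric all the way up to this common refinement and pushing it down by either route produces equivalent 2-metrics by the local statement above applied iteratively. The main obstacle of the proof is precisely the content of paragraph three: showing that \emph{equivalence} of 2-metrics, which is defined in terms of the induced exponential linearizations, is preserved under the averaging of Proposition \ref{prop:pushforward}. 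This requires carefully identifying the exponential of the averaged metric with the exponential of the original one in the directions transverse to the kernel $K$, and exploiting the stacky (fraction-level) nature of the equivalence relation to absorb the modifications introduced along the $K$-orbits.
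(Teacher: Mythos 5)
Your proposal follows essentially the same route as the paper: reduce to the fact that a Riemannian submersion of groupoids preserves and reflects equivalence of 2-metrics (via the commuting exponential diagrams), and then show that the averaging in the pushforward construction does not change the equivalence class. The step you flag as the main obstacle is resolved in the paper exactly as you sketch it: the averaged and original metrics agree in the directions normal to the $K$-orbits, so their exponentials differ only by a smooth map $\alpha:\nu(\tilde S)\to K$, which furnishes the required natural isomorphism between the two fractions.
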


\begin{proof}
First, we observe that given a Morita fibration $\phi:(\tilde G,\tilde\eta_i)\to (G,\eta_i)$ that is Riemannian with respect to two pairs of 2-metrics ($i=1,2$), then $\eta_1$ is equivalent to $\eta_2$ if and only if $\tilde\eta_1$ is equivalent to $\tilde\eta_2$.  This follows because the fibers of $\phi$ are included in the orbits, namely the orbits of $\tilde G$ are of the form $\tilde O=\phi^{-1}(O)$, with $O$ an orbit of $G$. Therefore, the induced map on the normal vector bundles
$$ \overline{\d\phi}:\nu(\tilde O)\to \nu(O), $$
is a fiberwise isometry for each set of metrics. This proves that the pullback of metrics defined in \ref{prop:pullback} gives a well-defined injective map on classes of metrics. 

To see that it is surjective, it is enough to show that, in the pushforward construction of Proposition \ref{prop:pushforward}, the averaged metric $\tilde\eta'$, which descends to a metric on the base, is equivalent to the original metric $\tilde\eta$. For this, note that in $\nu({\tilde O})$ the metric $\tilde\eta$ is already $\tilde G$-invariant, and since the $K$-action preserves the $\tilde G$-orbit $\tilde O$, the metric $\tilde\eta$ over $\nu({\tilde O})$ is also $K$-invariant, and it remains the same after averaging.
\end{proof}

The previous theorem suggests a definition for Riemannian metrics over differentiable stacks.
We define a {\bf metric} on the orbit stack $[M/G]$ of a Lie groupoid $G\toto M$ as an equivalence class of a 2-metric $\eta^{(2)}$ on $G$.
This notion of metric generalizes the usual notions of metrics for manifolds and orbifolds, and allow us to perform Riemannian geometry on more general differentiable stacks.

\begin{examples}\label{isometries}\

\begin{enumerate}
 \item (Manifolds) For a unit groupoid $M\toto M$, a 2-metric is the same  as a metric on $M$, and distinct 2-metrics are always inequivalent. It follows that for every proper groupoid without isotropy equivalence classes of metrics are in 1:1 correspondence with metrics on the orbit manifold. Hence, our definition extends the usual definition of metric for manifolds.

 \item (Orbifolds) When $G\toto M$ is a proper effective \'etale groupoid the orbit stack $[M/G]$ is an (effective) orbifold (cf. \cite{mm}). A 2-metric on $G\toto M$ is determined by a $G$-invariant metric on $M$ (see \cite[Example 4.1]{dhf}). It follows that 2-metrics on $G$ are equivalent if and only if they induce the same orbifold metric on $[M/G]$ in the usual sense on $M/G$.

 \item (Lie groups) In a transitive Lie groupoid $G\toto M$ any two 2-metrics on $G$ are equivalent. Metrics on a Lie group, viewed as a stack, are trivial in the sense of our definition, which does not detect any relevant information on the isotropies, and only sees the transverse directions.
\end{enumerate}
\end{examples}


In the forthcoming paper \cite{dhdm} we develop the theory of these metrics over differentiable stacks, exploring the corresponding notion of geodesics, and establishing a stacky version of Hopf-Rinow theorem, among other results.



\subsection{Tubular neighborhoods and stacky Ehresmann}


A natural application of metrics on stacks is the construction of tubular neighborhoods of substacks. We first need to show that the local linear model (normal bundle) of a stack around an embedded stack is well-defined. This is a consequence of the following proposition.

\begin{proposition}
Let $\phi:\tilde G\to G$ be a Morita fibration, let $S\subset M$ be a saturated embedded submanifold, and let $\tilde S=\phi^{-1}(S)$. Then the induced map $\overline{\d\phi}:\nu(\tilde G_{\tilde S})\to \nu(G_S)$ is a Morita fibration.
\end{proposition}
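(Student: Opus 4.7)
The plan is to use the equivalent characterization of a Morita fibration as a fully faithful map that is a surjective submersion on objects, and to transport the fiber-product description of $\tilde G$ coming from the fully faithfulness of $\phi$ down to the level of normal bundles.

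I would proceed as follows. Since $\phi$ is fully faithful, there is a good fiber product
$$\tilde G \;\cong\; G \times_{M\times M}(\tilde M\times\tilde M).$$
Because $\phi^{(0)}$ is a surjective submersion and $S\subset M$ is saturated and embedded, $\tilde S=(\phi^{(0)})^{-1}(S)$ is saturated and embedded in $\tilde M$, and $\phi^{(0)}|_{\tilde S}:\tilde S\to S$ is itself a surjective submersion. Restricting the identification above to arrows with source and target in $\tilde S$ yields a good fiber product
$$\tilde{G}_{\tilde{S}} \;\cong\; G_S\times_{S\times S}(\tilde S\times\tilde S),$$
which is good because $\tilde S\times\tilde S\to S\times S$ is a submersion. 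Taking tangent bundles and then quotienting, the key identification will be
$$\nu(\tilde{G}_{\tilde{S}}) \;\cong\; \nu(G_S)\times_{\nu(S)\times\nu(S)}\bigl(\nu(\tilde S)\times\nu(\tilde S)\bigr).$$
In other words, $\overline{\d\phi}$ is obtained by pulling back $\nu(G_S)\rightrightarrows\nu(S)$ along $\overline{\d\phi^{(0)}}:\nu(\tilde S)\to\nu(S)$, which is itself a surjective submersion because $\nu(\tilde S)=(\phi^{(0)}|_{\tilde S})^*\nu(S)$ is a vector bundle pullback over a surjective submersion. This immediately gives that $\overline{\d\phi}$ is fully faithful and a surjective submersion on objects, so by the lemma characterizing Morita fibrations, it is a Morita fibration.

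The main technical obstacle will be the quotient-of-fiber-products identity $(V\times_X W)/(U\times_Y R)\cong (V/U)\times_{X/Y}(W/R)$ applied to the relevant tangent bundles. Surjectivity is the delicate part: lifting an element on the right gives $v\in T_{G_S}G$ and $w\in T_{\tilde S\times\tilde S}(\tilde M\times\tilde M)$ whose images in $T_{S\times S}(M\times M)$ may differ by some $y\in T(S\times S)$. One cannot in general correct this by adjusting $v$, since $(s,t)|_{G_S}:G_S\to S\times S$ need not be a submersion; but one can always correct by adjusting $w$, since $\d(\phi^{(0)}\times\phi^{(0)})|_{\tilde S\times\tilde S}$ surjects onto $T(S\times S)$. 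This asymmetry, provided exactly by the hypothesis that $\phi$ is a fibration, is what makes the argument go through.
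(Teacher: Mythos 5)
Your proposal is correct and follows essentially the same route as the paper: the paper establishes fully faithfulness of $\overline{\d\phi}$ by showing that the back face of a cube of anchor maps is a good fiber product, deduced from the front face (fully faithfulness of $\phi$ restricted to $S$) together with the identification of normal bundles of pullbacks with pullbacks of normal bundles — exactly your chain $\nu(\tilde G_{\tilde S})\cong\nu(G_S)\times_{\nu(S)\times\nu(S)}(\nu(\tilde S)\times\nu(\tilde S))$ — and then invokes the same characterization of Morita fibrations as fully faithful maps that are surjective submersions on objects. Your explicit surjectivity check via adjusting the lift $w$ along the submersion $\tilde S\times\tilde S\to S\times S$ is a correct fleshing-out of what the paper calls ``a standard argument.''
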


\begin{proof}
The map $\overline{\d\phi}$ is a surjective submersion on objects, so we only need to show that it is also fully faithful. This follows from a standard argument on the commutative cube below: since the left, right and front face are good fiber products of manifolds, then so does the bottom face.
{\small
$$\begin{matrix}
\xymatrix@R=6pt@C=6pt{
\nu(\tilde G_{\tilde S}) \ar[rr] \ar[dd] \ar[rd]&  & \nu(\tilde S)\times \nu(\tilde S) \ar'[d][dd] \ar[dr]& \\
 & \tilde G_{\tilde S} \ar[dd] \ar[rr]&  & \tilde S\times \tilde S\ar[dd]\\
\nu(G_S) \ar'[r][rr] \ar[dr]&  & \nu(S)\times \nu(S) \ar[dr]& \\
& G_S \ar[rr]& & S\times S}
\end{matrix}$$
}
\end{proof}



We have seen in Section \ref{sec:mapsstacks} that an embedding between differentiable stacks can be modeled by the inclusion $G_S\to G$ of a full saturated embedded subgroupoid $G_S\toto S$ into $G\toto M$. We define the {\bf normal bundle} $\nu([S/G_S])$ of the corresponding stack embedding as the orbit stack of the groupoid $\nu(G_S)\toto \nu(S)$. The previous proposition shows that this is well-defined.
Notice that the orbit stack $[S/G_S]$ is also a substack of $[\nu(S)/\nu(G_S)]$ by means of the zero section. 

Having established a notion of normal bundle, we can easily make sense of linearization of a stack around a substack, i.e., tubular neighborhoods.
Let $[M/G]$ be the orbit stack of $G\toto M$ and let $[S/G_S]$ be an embedded substack modeled by a full saturated embedded subgroupoid $G_S\toto S$. A {\bf tubular neighborhood} of $[S/G_S]$ in $[M/G]$ is a stack isomorphism 
$[U/\nu(G_S)_U]\cong[V/G_V]$ between open embedded substacks of $[\nu(S)/\nu(G_S)]$ and $[M/G]$ that restricts to the identity on $[S/G_S]$.

A linearization of a groupoid $G\toto M$ around a saturated submanifold $S\subset M$ yields a tubular neighborhood of the substack $[S/G_S]$ inside $[M/G]$. A priori, the groupoid linearization is stronger than the existence of a stacky tubular neighborhood, for it involves groupoid isomorphism rather than Morita equivalences. However, these two notions are actually equivalent.

\begin{proposition}[Morita invariance of linearization]
Given $G\toto M$ a Lie groupoid and $S\subset M$ saturated embedded, 
the orbit stack $[S/G_S]$ admits a tubular neighborhood in $[M/G]$ if and only if $G$ is linearizable around $S$.
\end{proposition}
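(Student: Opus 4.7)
The forward direction is direct: a groupoid linearization $\alpha: U \xto{\sim} V$ between full open subgroupoids $G_S \subset U \subset G$ and $G_S \subset V \subset \nu(G_S)$ (fixing $G_S$, as the exponential linearizations of Theorem \ref{thm:linearization:grpd} do) descends to an isomorphism of the associated open substacks of $\X$ and $\nu(\Y)$ restricting to the identity on $\Y$, which by definition is a stacky tubular neighborhood.

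For the converse, the plan is to realize the given tubular neighborhood as a span of split Morita fibrations
$$\nu(G_S)|_{U_\nu} \xfrom{\phi} H \xto{\psi} G|_{U_M}$$
in which both legs restrict to the identity on a common copy of $G_S$ embedded in $H$ as a Lie subgroupoid. The heart of the argument is to extend this canonical section $G_S \hookrightarrow H$ of $\phi$ to a Lie groupoid section $\sigma: (\nu(G_S))_{U^{(0)}} \to H$ defined on a full open subgroupoid containing $G_S$. On objects, the submersion $\phi^{(0)}$ with given smooth section $\id_S$ over the embedded submanifold $S$ admits a smooth extension $\sigma^{(0)}: U^{(0)} \to H^{(0)}$ by the local section theorem, chosen so that at each $x \in S$ the image $d\sigma^{(0)}(\nu_x(S)) \subset T_{\sigma^{(0)}(x)} H^{(0)}$ is transverse to $\ker d\psi^{(0)}$ (such transverse complements exist pointwise by a dimension count and can be assembled globally by a partition of unity). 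On arrows, the fully faithfulness of $\phi$ encoded in the good fiber product
$$\xymatrix@R=10pt{\ar@{}[dr]|{\rm fp} H \ar[r]^{\phi} \ar[d]_{(s,t)} & \nu(G_S) \ar[d]^{(s,t)}\\ H^{(0)}\times H^{(0)} \ar[r]^{\phi^{(0)}\times\phi^{(0)}} & \nu(S)\times\nu(S)}$$
forces a unique smooth arrow-level extension $\sigma^{(1)}$, which by uniqueness is automatically multiplicative.

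Setting $\alpha := \psi\circ\sigma: (\nu(G_S))_{U^{(0)}} \to G|_{U_M}$ yields a fully faithful Lie groupoid map (composition of the fully faithful $\sigma$ with the Morita equivalence $\psi$) restricting to the identity on $G_S$. The transversality condition on $\sigma^{(0)}$ makes $d\alpha^{(0)}|_S$ invertible, so $\alpha^{(0)}$ is a local diffeomorphism near $S$; after shrinking $U^{(0)}$ we may assume $\alpha^{(0)}$ is an open embedding with image an open $V^{(0)} \subset M$. Then fully faithfulness of $\alpha$ forces $\alpha^{(1)}$ to identify the arrows of $(\nu(G_S))_{U^{(0)}}$ with those of the full subgroupoid $G|_{V^{(0)}}$, yielding the desired isomorphism of full open subgroupoids, that is, a linearization of $G$ around $S$. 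The main obstacle is the transversality condition in the construction of $\sigma^{(0)}$; this is where the compatibility between the two Morita fibrations genuinely enters the argument and requires a careful choice of the extension.
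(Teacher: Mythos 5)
Your argument follows essentially the same route as the paper's: realize the tubular neighborhood by a Morita span (the paper uses the principal bibundle $P$ directly), extend the canonical section over $S$ of one leg to a section $\sigma$ over a neighborhood, compose with the other leg to get a fully faithful map whose differential on objects is invertible along $S$, and shrink to obtain an isomorphism of full open subgroupoids. The only substantive difference is the direction of the resulting map ($\nu(G_S)\to G$ in your version, $G\to\nu(G_S)$ in the paper's), which is immaterial.

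The one soft spot is the step you yourself flag as ``the main obstacle'': arranging that $\d\sigma^{(0)}(T_x\nu(S))$ be transverse to $\ker\d\psi^{(0)}$ along $S$. Your proposed fix --- choosing transverse complements pointwise and assembling them by a partition of unity --- is not justified as stated (transversality to a varying distribution is not preserved under convex combinations of sections in any obvious way). Fortunately, no choice is needed: the condition is automatic for \emph{any} section of $\phi^{(0)}$ extending $\id_S$. Indeed, since the tubular neighborhood restricts to the identity on $\Y$, the two legs have the same preimage $\tilde S:=(\phi^{(0)})^{-1}(S)=(\psi^{(0)})^{-1}(S)$ of $S$ in $H^{(0)}$, so both $\ker\d\phi^{(0)}_x$ and $\ker\d\psi^{(0)}_x$ lie in $T_x\tilde S$ for $x\in S$. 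If $v\in T_x\nu(S)$ satisfies $\d\sigma^{(0)}(v)\in\ker\d\psi^{(0)}_x\subset T_x\tilde S$, then $v=\d\phi^{(0)}(\d\sigma^{(0)}(v))\in\d\phi^{(0)}(T_x\tilde S)=T_xS$, whence $\d\sigma^{(0)}(v)=v$ and $0=\d\psi^{(0)}(v)=v$. This is precisely the paper's parenthetical observation that $q_U^{-1}(S)=G_S=q_V^{-1}(S)$ forces $\d\phi^{(0)}$ to be invertible over $S$. With that substituted for your partition-of-unity step, the proof is complete and matches the paper's.
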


This type of result on Morita invariance of linearization goes back to the original paper \cite{weinstein}. The statement presented here generalizes (and is strongly inspired in) a result of \cite{cs}. 

For the proof, we follow the alternative approach to Morita equivalences by means of principal bibundles. Its correspondence with generalized maps is explained, e.g., in \cite{dh}. We will use the fact that a bibundle represents an actual Lie groupoid map if and only if it admits a global section, and it represents the identity map if and only if it is isomorphic to the trivial bundle \smash{$G\xfrom s M\xto t M$}.

\begin{proof}
Supposte there exists a stacky tubular neighborhood, namely a Morita equivalence between saturated neighborhoods $G_S\subset G_U\subset G$ and $G_S\subset \nu(G_S)_V\subset \nu(G_S)$, given by a bibundle $P$, whose restriction to $S$ is trivial:
$$\xymatrix{
\ar[rr];[r]_s \ar[rr];[rrr]^t \ar[drr];[dr]_{q_U} \ar[drr];[drrr]^{q_V}
\ar[r];[rd] \ar[rr];[drr] \ar[rrr];[drrr] 
\ar@/_/[r];[rr]_u \ar@{}[d];[dr]|{\subset} \ar@/_1pc/@{-->}[d];[drr]_{\alpha}
& S & G_S & S \\ U' & U & P & V}$$
The global section $u:S\to G_S$ extends to a section $\alpha$ of $q_U$ defined in a neighborhood $S\subset U'$, for instance by linearizing $q_U$ around $S$. Writing $P'=q_U^{-1}(U')$ and  $V'=q_V(P')$, the bibundle $U'\from P'\to V'$ admits a section and is then given by a groupoid map $\phi:G_{U'}\to \nu(G_S)_{V'}$.
This $\phi$ is Morita, and since $\phi^{(0)}|_S$ is invertible and $\d \phi^{(0)}$ is invertible over $S$, then $\phi^{(0)}$ is invertible over some open $S\subset U''\subset U'$. It follows that $\phi|_{G_{U''}}:G_{U''}\to \nu(G_S)_{V''}$ is an isomorphism and therefore a linearization. 
\end{proof}


As an immediate consequence of previous result and the linearization of proper Lie groupoids by exponential maps we get the following:

\begin{proposition}[Stacky Tubular Neighborhood]
A metric on a separated stack $[M/G]$ yields a tubular neighborhood around any embedded substack $[S/G_S]$ by the exponential maps.
\end{proposition}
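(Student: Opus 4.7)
The plan is to combine the Riemannian linearization theorem for proper Lie groupoids (Theorem \ref{thm:linearization:grpd}) with the Morita invariance of linearization just proved. First, since $\X$ is separated, pick a proper Lie groupoid $G \toto M$ presenting $\X$ and a 2-metric $\eta^{(2)}$ on $G$ representing the given metric class. Since $\Y \subset \X$ is an embedded substack and $G$ is proper, using the characterization of embeddings in the proper case together with Lemma \ref{lem:Morita:full:pullback}, I can arrange that $\Y$ is presented by $G_S \toto S$ for some embedded saturated submanifold $S \subset M$.

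Next, I would invoke Theorem \ref{thm:linearization:grpd} applied to $(G, \eta^{(2)})$ around $S$. This produces open subgroupoids $G_S \subset U \subset \nu(G_S)$ and $G_S \subset V \subset G$ together with a groupoid isomorphism $\exp_{\eta^{(2)}} \colon U \xto{\cong} V$ built out of the exponential map of $\eta^{(2)}$. Because $G$ is proper, these subgroupoids may be taken to be full open subgroupoids, so that $U = \nu(G_S)_{U^{(0)}}$ and $V = G_{V^{(0)}}$.

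It then remains to pass to the stack quotient. The full open subgroupoids $U \subset \nu(G_S)$ and $V \subset G$ present open substacks $\Y \subset \mathfrak{A} \subset \nu(\Y)$ and $\Y \subset \mathfrak{U} \subset \X$, and the exponential isomorphism descends to an isomorphism of stacks $\mathfrak{A} \dashto \mathfrak{U}$ restricting to the identity on $\Y$. This is exactly the easy direction of the Morita invariance of linearization proposition, so the descent is essentially automatic once the groupoid-level linearization is in place.

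The main point of care — rather than a serious obstacle — is to verify that the resulting tubular neighborhood does not depend on the auxiliary choices of proper presentation and 2-metric representative. Independence from the 2-metric within its equivalence class is exactly what the definition of equivalence of 2-metrics encodes: exponentials of equivalent 2-metrics yield isomorphic fractions near $S$, hence canonically isomorphic tubular neighborhoods at the stacky level. Independence from the proper presentation follows from the Morita invariance of 2-metrics (Theorem \ref{thm:Morita-metrics}) together with Lemma \ref{lem:Morita:full:pullback}, since Morita fibrations pull back full embedded saturated subgroupoids bijectively and the pullback and pushforward of 2-metrics transfer the exponential data compatibly.
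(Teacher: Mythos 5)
Your proposal is correct and follows essentially the route the paper intends: the paper states this proposition as a direct corollary of the groupoid linearization theorem (with full subgroupoids available by properness), the identification of embedded substacks with full embedded saturated subgroupoids, and the Morita invariance of linearization and of metrics. Your additional remarks on independence of the choices of presentation and of 2-metric representative are consistent with the paper's definition of equivalence of metrics and do not introduce any gap.
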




Note that non-full subgroupoids do not define substacks, and therefore, weak linearization of groupoids does not translate into tubular neighborhoods. 
In \cite{dhdm} we will address the issue of the independence of the stacky exponential map with respect to the groupoid metric representing it. 

Given $\phi:[\tilde M/\tilde G]\to [M/G]$ a stacky surjective submersion, and given $[S/G_S]\subset[M/G]$ a stacky injective immersion, it follows from Lemma \ref{lem:Morita:full:pullback} that the {\bf preimage} $\phi^{-1}([S/G_S])$ is a well-defined substack of $[\tilde M/\tilde G]$, actually represented by the restriction of $\tilde G$ to $\tilde S=\phi^{-1}(S)$,  and that it is embedded  if $[S/G_S]$ is.  

We say that $\phi$ is {\bf linearizable} around $[S/G_S]$ if there are tubular neighborhoods
$\tilde \alpha:[\tilde U/\nu(\tilde G_{\tilde S})_{\tilde U}]\cong[\tilde V/\tilde G_{\tilde V}]$
and
$\alpha:[U/\nu( G_{S})_{ U}]\cong[V/ G_{ V}]$  
such that $\phi\tilde\alpha=\alpha\overline{\d\phi}$.

%

%


\begin{theorem}[Linearization of submersions]
A stacky surjective submersion between separated stacks is linearizable around any embedded substack.
\end{theorem}

\begin{proof}
Without loss of generality, we can suppose that the stacky surjective submersion is represented by a fibration $\phi:\tilde G\to G$ between proper groupoids, and that the embedded substack is represented by a saturated embedded subgroupoid $G_S\toto S$. By Theorem \ref{thm:linear:fib}, $\phi$ is linearizable around $S$, and since $\tilde G$ and $G$ are proper, we can take the groupoid opens $\tilde U$ and $U$ to be full, hence defining stacky tubular neighborhoods and a stacky linearization. 
\end{proof}

We say that a stacky map $\psi/\phi$ is {\em proper} if it can be presented by a fraction on which $\phi$ proper. 

\begin{corollary}[Stacky Ehresmann]
Every stacky proper surjective submersion $\phi/\psi:[\tilde M/\tilde G]\to[M/G]$ between separated stacks admmits a {\em tube} linearization around any embedded substack, namely one on which $[\tilde U/\tilde G_{\tilde U}]=(\overline{\d\phi})^{-1}[U/G_U]$ and $[\tilde V/\tilde G_{\tilde V}]=\phi^{-1}[V/G_V]$.
\end{corollary}

\begin{proof}
We can model the stacky proper surjective submersion with a proper split fibration $\phi:\tilde G\to G$ between proper groupoids, and apply the groupoid version of Ehresmann Theorem \ref{thm:groupoid-ehresmann}.
\end{proof}


These two stacky results generalize and unify many of the linearization and rigidity results about Lie groupoids and related geometries:
\begin{itemize}
\item
When $[\tilde M/\tilde G]$ and $[M/G]$ are manifolds we recover the linearization of submersions and the classical Ehresmann's Theorem, respectively.
\item When $[\tilde M/\tilde G]$ is a manifold, $\psi/\phi$ is a (proper) presentation of $[M/G]$, which is the same as an (s-)proper Lie groupoid $G\toto M$ with orbit stack $[M/G]$. The Linearization Theorem yields the Weinstein-Zung linearization theorem for proper Lie groupoids, while the stacky Ehresmann's Theorem yields the invariant linearization of s-proper Lie groupoids. Special cases of these include the classic linearization of actions, fibrations and foliations. 
\item When $[M/G]$ is a manifold and $\psi/\phi$ is proper we recover the rigidity of compact Lie groupoids (Theorem \ref{thm:rigidity}). This includes as special cases the rigidity for actions, fibrations and foliations (see Section \ref{section:rigidity} and \cite{dhf-note}).
\end{itemize}


{

}



\begin{thebibliography}{xxx}

\bibitem{ac}
C. Arias Abad, M. Crainic;
Representations up to homotopy and Bott's spectral sequence for Lie groupoids.
\emph{Adv.~in Math.~}{\bf 248} (2013), 416--452.

\bibitem{agv}
M. Artin, A. Grothendieck, J. L. Verdier;
Théorie des topos et cohomologie étale des schémas (SGA 4-2);  
Lecture Notes in Mathematics 270; Springer-Verlag (1972). 


\bibitem{bx}
K. Behrend, P. Xu;
Differentiable stacks and gerbes;
Journal of Symplectic Geometry {\bf 9} (2011) 285--341.

\bibitem{bcdh}
H. Bursztyn, A. Cabrera, M. del Hoyo;
Vector bundles over Lie groupoids and algebroids;
\emph{Adv.~in Math.~}{\bf 290} (2016), 163-207;
Preprint \texttt{arXiv:1410.5135}.

\bibitem{bnz}
H. Bursztyn, F. Noseda, C. Zhu:
Principal actions of stacky Lie groupoids. 
International Mathematics Research Notices
Preprint \texttt{arXiv:1510.09208}.



\bibitem{cms}
M.~Crainic, J.~N.~Mestre, I.~Struchiner;
Deformations of Lie Groupoids.
Preprint \texttt{arXiv:1510.02530}.

\bibitem{cs}
M.~Crainic, I.~Struchiner;
On the Linearization Theorem for proper Lie groupoids;
\emph{Ann. Scient. \'Ec. Norm. Sup.} $4^e$ s\'erie, \textbf{46} (2013), 723-746.




\bibitem{dh0}
M. del Hoyo;
On the homotopy type of a cofibred category;
Cahiers de Topologie et Geometrie Differentielle Categoriques 53 (2012), 82--114

\bibitem{dh}
M. del Hoyo;
Lie groupoids and their underlying orbispaces.
\emph{Portugaliae Mathematica} {\bf 70} (2013), 161--209.

\bibitem{dhdm}
M. del Hoyo, M. de Melo;
Geodesics on Differentiable Stacks.
Work in progress.

\bibitem{dhf}
M. del Hoyo, R.L. Fernandes;
Riemannian Metrics on Lie Groupoids;
\emph{Journal f\"ur die reine und angewandte Mathematik (Crelle)}, 2015;
Preprint \texttt{arXiv:1404.5989}.

\bibitem{dhf-note}
M.~del Hoyo, R.L.~Fernandes;
On deformations of compact foliations;
Preprint \texttt{arXiv:1807.10748}.



\bibitem{rosenberg1}
D. Epstein, H. Rosenberg;
Stability of compact foliations. 
In \emph{Geometry and Topology}, Lecture Notes in Mathematics 597, Springer-Verlag (1977), 151--160.

\bibitem{giraud}
J. Giraud;
\emph{Cohomologie non-ab\'elienne}; 
Die Grundlehren der mathematischen Wis\-sen\-scha\-ften, Band 179. Springer-Verlag, Berlin-New York, (1971).





\bibitem{lerman}
E. Lerman;
Orbifolds as stacks? 
\emph{L'Enseign.~Math.~(2)} {\bf 56} (2010), no. 3-4, 315--363.

\bibitem{mkbook}
K. Mackenzie;
\emph{General Theory of Lie Groupoids and Lie Algebroids}.
London Mathematical Society Lecture Note Series 213,
Cambridge University Press (2005).

\bibitem{metzler}
D. Metzler,
Topological and Smooth Stacks.
Preprint \texttt{arXiv:math/0306176}

\bibitem{mm}
I. Moerdijk, J. Mrcun;
\emph{Introduction to Foliations and Lie Groupoids}.
Cambridge Studies in Advanced Mathematics 91,
Cambridge University Press (2003).

\bibitem{mm2}
I. Moerdijk, J. Mrcun;
\emph{Lie groupoids, sheaves and cohomology}.
London Math. Soc. Lecture Note Ser. 323 (2005), 145--272.

\bibitem{nr}
A. Nijenhuis, R. Richardson;
Deformations of homomorphisms of Lie groups and Lie algebras;
Bulletin of the American Mathematical Society 73.1 (1967), 175--179.

\bibitem{pr}
R. Palais, R. Richardson;
Uncountably Many Inequivalent Analytic Actions of a Compact Group on $\R^n$;
Proceedings of the American Mathematical Society 14.3 (1963), 374--377.

\bibitem{palais}
R.~Palais, T.~Stewart;
Deformations of Compact Differentiable Transformation Groups.
\emph{Amer.~J.~of Math.~}{\bf 82} (1960), no. 4, 935--937.




\bibitem{hamilton}
R. Hamilton;
Deformation theory of foliations. 
Available from Cornell University in mimeographed form.


\bibitem{vistoli}
A. Vistoli;
Grothendieck topologies, fibered categories and descent theory; Fundamental algebraic geometry, 1–104, Math. Surveys Monogr., 123, AMS Providence, RI (2005). 

\bibitem{weinstein}
A.~Weinstein;
Linearization of regular proper groupoids. 
\emph{J.~Inst.~Math.~Jussieu} {\bf 1}  (2002), no. 3, 493--511.

\bibitem{zung}
N.T. Zung;
Proper groupoids and momentum maps: linearization, affinity, and convexity. 
\emph{Ann.~Sci.~ \'Ecole Norm.~Sup.~(4)} {\bf 39}  (2006), no. 5, 841--869.

\end{thebibliography}
\end{document}